\def\tri{\mathcal{T}}
\def\bkR{{\rm I\kern-.17em R}}
\def\R{\bkR}
\def\bkZ{{\rm Z\kern-.28em Z}}
\def\Z{\bkZ}
\def\prho{\overline{\rho}}
\DeclareMathOperator{\im}{im}
\DeclareMathOperator{\Sym}{Sym}
\DeclareMathOperator{\Alt}{Alt}
\theoremstyle{plain}
\newtheorem{theorem}{Theorem}
\newtheorem{lemma}[theorem]{Lemma}
\newtheorem{proposition}[theorem]{Proposition}
\newtheorem{corollary}[theorem]{Corollary}
\newtheorem{problem}[theorem]{Problem}
\theoremstyle{definition}
\newtheorem*{definition*}{Definition}
\newtheorem{example}[theorem]{Example}
\newtheorem{remark}[theorem]{Remark}
\def\tri{\mathcal{T}}
\def\di{\mathcal{D}}
\numberwithin{equation}{section}
\begin{document}

\title{Even triangulations of $\bf{n}$--dimensional pseudo-manifolds}
\author{J.\thinspace Hyam Rubinstein and Stephan Tillmann}

\begin{abstract}
This paper introduces even triangulations of $n$--dimensional pseudo-manifolds and links their combinatorics to the topology of the pseudo-manifolds. This is done via normal hypersurface theory and the study of certain symmetric representation. In dimension 3, necessary and sufficient conditions for the existence of even triangulations having one or two vertices are given. For Haken $n$--manifolds, an interesting connection between very short hierarchies and even triangulations is observed. 
\end{abstract}

\primaryclass{57M25, 57N10}

\keywords{3--manifold, n--manifold, triangulation, even triangulation, normal surface, normal hypersurface, representations of the fundamental group}

\makeshorttitle


\section{Introduction}

Aitchison, Matsumoto and Rubinstein~\cite{AMR} discuss immersed hypersurfaces in cubed $n$--manifolds of non-positive curvature in terms of a holonomy representation into the symmetric group on $n$ letters. A necessary and sufficient condition to have such a representation is for each codimension--$2$ face of the cubing to have even order.
Joswig and Izmestiev~\cite{Joswig2002, IJ-2003} study $n$--dimensional polytopes and 
PL manifolds in terms of the symmetric group on $n+1$ letters, also obtaining representations if and only if all codimension--$2$ faces have even order. 

This paper synthesises these ideas into a theory for singular triangulations of $n$--dimensional pseudo-manifolds in which all codimension--$2$ faces have even order. From these \emph{even triangulations}, we obtain representations of fundamental groups into the symmetric group on $n+1$ letters as well as induced representations into other symmetric groups, and are able to obtain topological information from the combinatorics of a triangulation. For instance, if the even triangulation has at most $n$ vertices, then the representation (and hence the fundamental group) is non-trivial. The material described so far can be found in Section~\ref{sec:sym reps and normal}.

In Section~\ref{sec:normal hyper}, we give a new treatment of normal hypersurfaces in $n$--dimensional pseudo-manifolds, including both an algebraic as well as a geometric viewpoint. We show that evenness is a necessary and sufficient condition for certain canonical normal hypersurfaces to be immersed without branching. Moreover, linking the existence of certain embedded hypersurfaces to symmetric representations, we are able to show that if an even triangulation has exactly one vertex, then the fundamental group is non-cyclic.

We then focus on the case of $3$--manifolds. Even triangulations with at least four vertices can be constructed for all closed 3--manifolds, so the cases of interest are those with fewer vertices. In Section~\ref{sec:Properties of 3-manifolds with even triangulations}, we show that if an even triangulation of the closed, orientable 3--manifold $M$ has exactly 
\begin{savenotes}
$$
\left.\begin{array}{l }
\text{three vertices,} \\
\text{two vertices,}\\ 
\text{one vertex,}
\end{array}\right\}
\text{ then $\pi_1(M)$ has an epimorphism onto } 
\left\{\begin{array}{l }
C_2 \text{ or } C_3,\\
C_2 \text{ or } \Alt(4),\footnote{Unless three faces in the triangulation form a spine for $L(3,1)$ and, in particular, $M$ has $L(3,1)$ as a summand in its prime decomposition.} \\ 
C_2\times C_2,\; C_4,\; \Alt(4) \text{ or } \Sym(4).
\end{array}\right.
$$
\end{savenotes}

Having established these necessary conditions for a 3--manifold to admit an even triangulation with few vertices, we also give sufficient conditions. We establish that if $H_1(M,\Z_2) \ne 0$ (i.e.\thinspace $\pi_1(M)$ has an epimorphism onto $C_2$), then $M$ has an even triangulation with two vertices; and if $\pi_1(M)$ has an epimorphism onto $C_2\times C_2$ or $C_4,$ then $M$ has an even triangulation with one vertex. 

There has been much recent interest in $\Z_2$--homology, giving useful topological properties of $3$--manifolds. For instance, $\Z_2$--homology has been studied in the context of searching for incompressible surfaces, bounds on $\Z$--homology and hyperbolic volume---see for example \cite{ACS, CS, Lac1, Lac2, ShWa}. A key result due to Lubotzky~\cite{Lu} is that any complete hyperbolic $3$--manifold of finite volume has a finite sheeted covering so that the rank of $\Z_2$--homology is arbitrarily large. Our results therefore show that any closed hyperbolic 3--manifold has a finite sheeted covering with a 1--vertex even triangulation.

We return to arbitrary dimensions in Section~\ref{sec:Very short hierarchies}, where an interesting connection between very short hierarchies and even triangulations is observed.

In a follow-up paper \cite{BRT}, the authors, together with M. B\"okstedt, construct CAT(0) structures on triangulated $n$--manifolds using Euclidean metrics on $n$-simplices. To verify that the metrics indeed satisfy sufficient conditions to be CAT(0), it turns out to be very convenient to assume that the underlying triangulations are even. 

In a further paper \cite{RT}, the authors generalise the trisections of $4$--manifolds due to Gay and Kirby~\cite{GK} to multisections of $n$--manifolds. A natural duality between triangulations and multisections is constructed---the key observation is again a suitable evenness condition on the triangulations. This also yields a construction of non-positively curved cubed manifolds.

\textbf{Acknowledgements:} The authors are partially supported under the Australian Research Council's Discovery funding scheme (project number DP130103694). The authors thank the Mathematical Sciences Research Centre at Tsinghua University, the Centre for Advanced Study at Warsaw University of Technology, and the Max Planck Institute for Mathematics at Bonn, where parts of this work have been carried out, for their hospitality.


\section{Symmetric representations and normal hypersurfaces}
\label{sec:sym reps and normal}

Motivating examples are discussed in \S\ref{subsec:idea and examples} and revisited in \S\ref{subsec:examples revisited}. The definition of an even triangulation is given in \S\ref{subsec:pseudo-manifolds}. In \S\ref{subsec:Symmetric representations}, we generalise Joswig's group of projectivities from combinatorial manifolds to triangulated pseudo-manifolds, and obtain first topological information from even triangulations with few vertices. In \S\ref{subset:sum reps from partitions}, we introduce symmetric representations arising from partitions. 


\subsection{Examples}
\label{subsec:idea and examples}

The key idea to construct representations is as follows. Given a (possibly singular) triangulation of the $n$--dimensional manifold $M,$ pick one $n$--simplex as a base, label its corners and then \emph{reflect} this labelling across its codimension--one faces to the adjacent $n$--simplices. (This is illustrated for 3--simplices in Figure~\ref{fig:perspectivity}.)
\begin{figure}[h!]
\psfrag{0}{{\small $0$}}
\psfrag{1}{{\small $1$}}
\psfrag{2}{{\small $2$}}
\psfrag{3}{{\small $3$}}
\psfrag{t}{{\small $\varphi_\tau$}}
\psfrag{a}{{\small $\tau$}}
\psfrag{b}{{\small $\varphi_\tau(\tau)$}}
\centering
\includegraphics[scale=0.5]{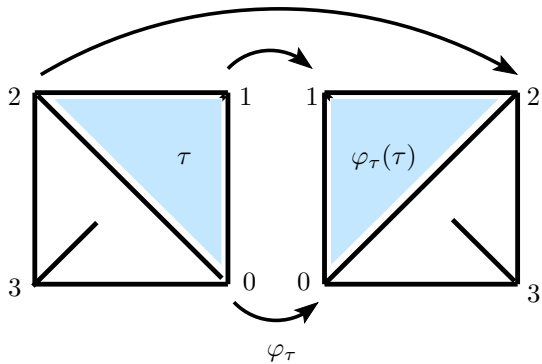}
\caption{Reflecting the labelling across a facet: The labelling of the tetrahedron on the left is reflected across facet $\tau$ to the tetrahedron on the right.} \label{fig:perspectivity}
\end{figure}
This induced labelling is then propagated further across faces, and if one returns to the base simplex, one obtains a permutation of its labels. If all $(n-2)$--simplices have even degree, this yields a representation of the fundamental group of $M$ into $\Sym(n+1).$ This will now be illustrated with some 3--dimensional examples, before we give the formal treatment and produce some general results.

\begin{figure}[h!]
\psfrag{0}{{\small $0$}}
\psfrag{1}{{\small $1$}}
\psfrag{2}{{\small $2$}}
\psfrag{3}{{\small $3$}}
\psfrag{4}{{\small $0'$}}
\psfrag{5}{{\small $1'$}}
\psfrag{6}{{\small $2'$}}
\psfrag{7}{{\small $3'$}}
  \begin{center}
 \subfigure[Quaternionic space]{\includegraphics[scale=0.45]{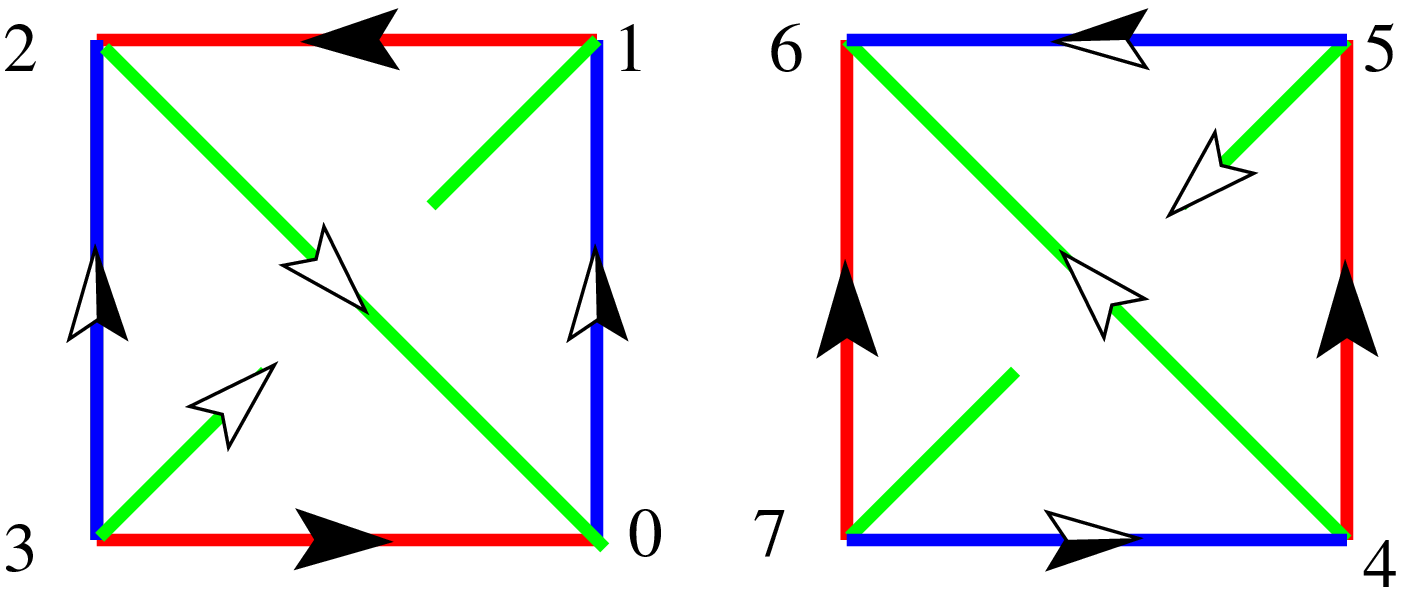}\label{fig:quaternionic}}
    \qquad\qquad
     \subfigure[Figure eight knot complement]{\includegraphics[scale=0.45]{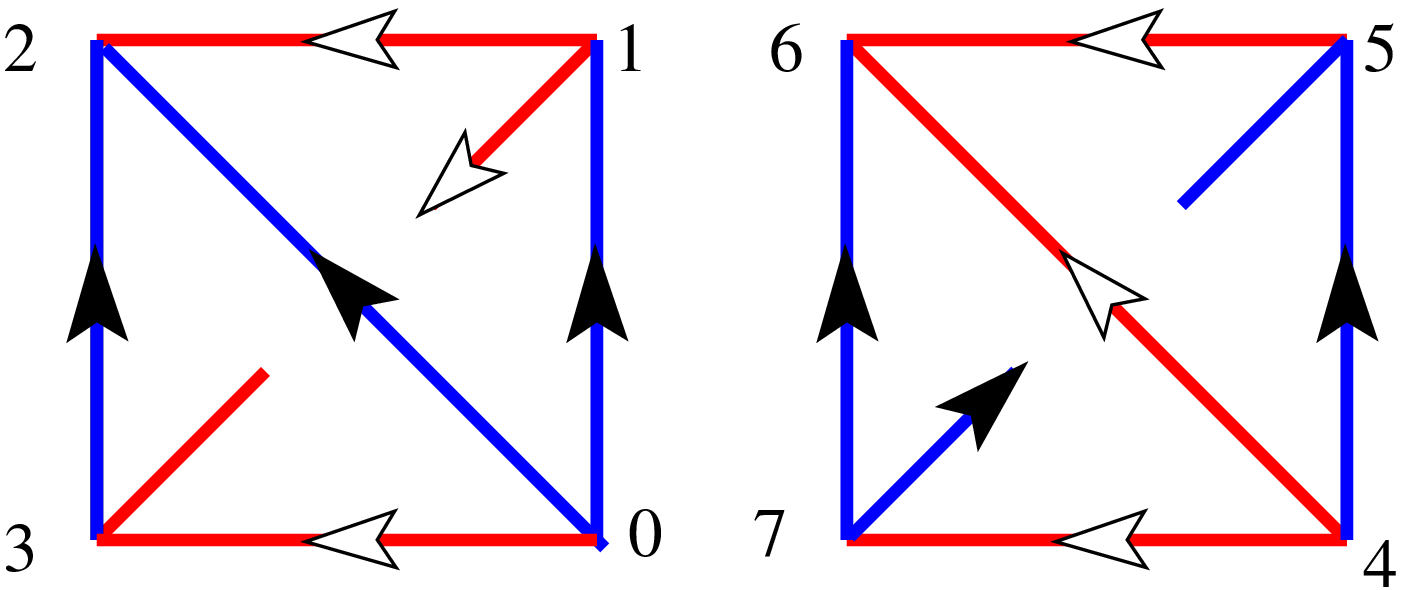} \label{fig:figure8}}
\end{center}
\caption{A closed spherical manifold and a hyperbolic knot complement}
\end{figure}

\begin{example}[(Quaternionic space)]
Quaternionic space $S^3/Q_8$ has a triangulation with two tetrahedra and one vertex; see Figure~\ref{fig:quaternionic}. Denoting the tetrahedra $\sigma_0=[0, 1, 2, 3]$ and $\sigma_1=[0', 1', 2', 3'],$ the faces are identified in pairs:
\begin{align*}
F_3\co [0, 1, 2] \to [3', 0', 1'], \quad 
F_2\co [0, 1, 3] \to [1', 2', 0'], \\
F_1\co [0, 2, 3] \to [2', 0', 3'], \quad
F_0\co [1, 2, 3] \to [3', 2', 1'].
\end{align*}
The labelling of $\sigma_0$ can be {reflected} to $\sigma_1$ via $F_3,$ giving the assigment $3'\to0,$ $0'\to1,$ $1'\to 2$ and $2'\to 3.$ Composing with $F_2^{-1},$ $F_1^{-1},$ and $F_0^{-1}$ respectively results in three permutations of the vertices of $\sigma_0.$ We obtain 
$$F_2^{-1}F_3 \mapsto (02)(13),\quad
F_1^{-1}F_3 \mapsto (03)(12),\quad
F_0^{-1}F_3 \mapsto (01)(23).
$$ 
These compositions of face pairings generate the fundamental group of $S^3/Q_8.$ There are three edges in the triangulation, all having degree four. The product of the reflections across the faces incident with an edge therefore gives the trivial permutation. So
we obtain a natural homomorphism $\pi_1(S^3/Q_8) = Q_8 \to \Sym(4)$ with image isomorphic with $C_2\times C_2.$
\end{example}

\begin{example}[(A lens space)]\label{exa:L(4,1)}
The lens space $L(4,1)$ has a triangulation with just one tetrahedron $[0, 1, 2, 3]$ and face pairings $[0,1,2] \to [3,0,1]$ and $[0,2,3]\to [3,1,2]$ (see Figure~\ref{fig:L(4,1)}). Both give the permutation $(0321).$ Since the degrees of the two edges are $2$ and $4$ respectively, this again corresponds to a homomorphism $\pi_1(L(4,1)) \to \Sym(4),$ which in this case is a homomorphism onto the subgroup isomorphic to $C_4$ generated by $(0321).$ 
\end{example}

The construction also applies to \emph{ideal triangulations}, where the simplices glue up to a \emph{pseudo-manifold} and the complement of the set of vertices is a non-compact manifold whose fundamental group is again generated by certain products of face pairings.

\begin{example}[(The figure eight knot complement)]\label{exa:fig 8}
The complement $M$ of the figure eight knot in $S^3$ has an ideal triangulation with two ideal tetrahedra and one ideal vertex; see Figure~\ref{fig:figure8}. 
Denoting the tetrahedra $\sigma_0=[0, 1, 2, 3]$ and $\sigma_1=[0', 1', 2', 3'],$ the faces are identified in pairs:
\begin{align*}
F_3\co [0, 1, 2] \to [3', 1', 2'], \quad 
F_2\co [0, 1, 3] \to [0', 1', 2'], \\
F_1\co [0, 2, 3] \to [0', 1', 3'], \quad
F_0\co [1, 2, 3] \to [0', 2', 3'].
\end{align*}

The labelling of $\sigma_0$ is again {reflected} to $\sigma_1$ via $F_3,$ giving the assigment $3'\to0,$ $1'\to1,$ $2'\to 2$ and $2'\to 3.$ Composing with $F_2^{-1},$ $F_1^{-1},$ and $F_0^{-1}$ respectively results in three permutations of the vertices of $\sigma_0.$ We obtain 
$$F_2^{-1}F_3 \mapsto (032),\quad
F_1^{-1}F_3 \mapsto (03)(12),\quad
F_0^{-1}F_3 \mapsto (013).
$$ 
In this case, there are two edges of degree 6, so again the products of the reflections across all faces abutting an edge (counted with multiplicity) are trivial, and we obtain a homomorphism from $\pi_1(M) \to \Sym(4)$ with image isomorphic with $\Alt(4).$
\end{example}

\begin{figure}[h!]
\psfrag{0}{{\small $0_0$}}
\psfrag{1}{{\small $1_0$}}
\psfrag{2}{{\small $2_0$}}
\psfrag{3}{{\small $3_0$}}
\psfrag{4}{{\small $0_1$}}
\psfrag{5}{{\small $1_1$}}
\psfrag{6}{{\small $2_1$}}
\psfrag{7}{{\small $3_1$}}
\psfrag{8}{{\small $0_2$}}
\psfrag{9}{{\small $1_2$}}
\psfrag{10}{{\small $2_2$}}
\psfrag{11}{{\small $3_2$}}
\psfrag{12}{{\small $0_3$}}
\psfrag{13}{{\small $1_3$}}
\psfrag{14}{{\small $2_3$}}
\psfrag{15}{{\small $3_3$}}
  \begin{center}
\includegraphics[scale=0.35]{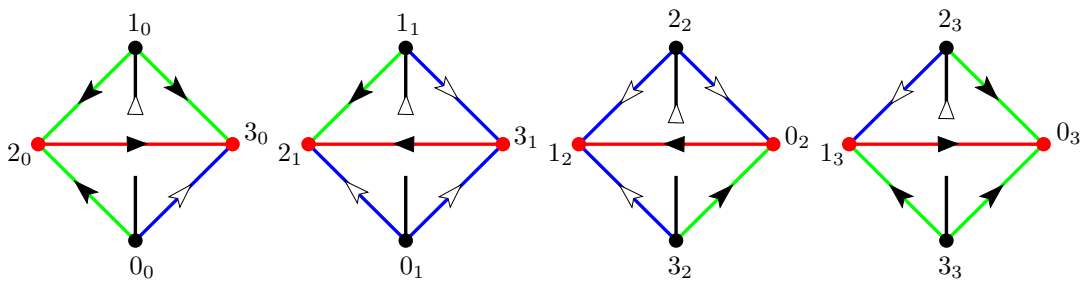}
\end{center}
\caption{The Whitehead link complement}\label{fig:WHL}
\end{figure}

\begin{example}[(The Whitehead link complement)]\label{exa:WHL}
The complement $M$ of the Whitehead link in $S^3$ has an ideal triangulation with four ideal tetrahedra and two ideal vertices; see Figure~\ref{fig:WHL}. Choosing the left-most ideal tetrahedron as a base, the canonical symmetric representation has image a non-normal Klein four group in $\Sym(4)$; namely $\langle (01), (23) \rangle.$
\end{example}


\subsection{Pseudo-manifolds and even triangulations}
\label{subsec:pseudo-manifolds}

Let $\widetilde{\Delta}$ be a finite union of pairwise disjoint, oriented Euclidean $n$--simplices with the standard simplicial structure. Every $k$--simplex $\tau$ in $\widetilde{\Delta}$ is contained in a unique $n$--simplex $\sigma_\tau.$ An $(n-1)$--simplex in $\widetilde{\Delta}$ is termed a \emph{facet} and a $0$--simplex a \emph{corner}. A facet (or corner) $\tau$ has a unique \emph{opposite corner (or facet)} $\sigma_\tau \setminus \tau.$ To simplify notation, we will not distinguish between a singleton and its element.

Let $\Phi$ be a family of orientation-reversing affine isomorphisms pairing the facets in $\widetilde{\Delta},$ with the properties that $\varphi \in \Phi$ if and only if $\varphi^{-1}\in \Phi,$ and every facet is the domain of a unique element of $\Phi.$ The elements of $\Phi$ are termed \emph{face pairings}.

The quotient space $\widehat{M} = \widetilde{\Delta}/\Phi$ with the quotient topology is then a closed, orientable $n$--dimensional pseudo-manifold, and the quotient map is denoted $p\co \widetilde{\Delta} \to \widehat{M}.$ The triple $\tri = ( \widetilde{\Delta}, \Phi, p)$ is a \emph{(singular) triangulation} of $\widehat{M}.$ The set of non-manifold points of $\widehat{M}$ is contained in the $(n-3)$--skeleton. (See Seifert-Threfall~\cite{SeiThr}.) If $n=2,$ then $\widehat{M}$ is a surface.  We will always assume that $\widehat{M}$ is connected. In the case where $\widehat{M}$ is not connected, the results of this paper apply to its connected components.

The quotient space $\widehat{M}$ is studied via the map $p\co \widetilde{\Delta} \to \widehat{M}.$ The image of a $k$--simplex under $p$ is termed a \emph{$k$--singlex} in $M.$ The \emph{degree} of the $k$--singlex $\tau$ is the degree of the restriction $p \co p^{-1}(\tau)\to \tau.$ We term $\tau$ \emph{even} if its degree is even, and we term the triangulation of $\widehat{M}$ \emph{even} if every $(n-2)$--singlex is even.

Denote $\widehat{M}^{(k)}$ the image of the $k$--skeleton of $\widetilde{\Delta}$ under the projection map, and $M = \widehat{M}\setminus \widehat{M}^{(0)}.$ The pseudo-manifold $\widehat{M}$ is often referred to as the \emph{end-compactification} of $M.$ Denoting the restrictions of $\Phi$ and $p$ to $ \widetilde{\Delta} \setminus \widetilde{\Delta}^{(0)}$ by the same letters, the triple $\tri = ( \widetilde{\Delta}\setminus \widetilde{\Delta}^{(0)}, \Phi, p)$ is an \emph{ideal (singular) triangulation} of $M.$
The dual $(n-1)$--skeleton in $\widehat{M}$ is called a \emph{spine} for $M,$ as $M$ retracts onto this spine. Hence these two have isomorphic fundamental groups, whilst the fundamental group of $\widehat{M}$ is a quotient thereof. The \emph{dual graph} or \emph{dual 1--skeleton} of the triangulation is the 1--skeleton of the dual $(n-1)$--skeleton. It has one vertex for each $n$--singlex (which is identified with its barycentre) and one edge for each $(n-1)$--singlex.

As indicated, the adjective \emph{singular} is usually omitted, and we will not need to distinguish between the cases of a simplicial or a singular triangulation.


\subsection{Symmetric representations}
\label{subsec:Symmetric representations}

Given the facet $\tau,$ denote $\varphi_\tau$ the face pairing with domain $\tau.$
Following Joswig~\cite{Joswig2002}, the facet $\tau$ defines the \emph{perspectivity} $p_\tau \co \sigma_\tau \to \sigma_{\varphi_\tau(\tau)}$ by:
\begin{equation}
 \tau^{n-2} \mapsto \begin{cases} \varphi_\tau(\tau^{n-2}) & \text{if } \tau^{n-2} \subseteq \tau, \\
 						\sigma_{\varphi_\tau(\tau)} \setminus {\varphi_\tau(\tau)} & \text{otherwise.}
			\end{cases}			
\end{equation}
A perspectivity acts like a reflection across a facet (see Figure~\ref{fig:perspectivity}). The definition implies that $$p_{\varphi_\tau(\tau)} \circ p_\tau\co \sigma_\tau \to \sigma_\tau$$ is the identity.

A \emph{facet path} from facet $\tau$ to $\tau'$ is a finite sequence
\begin{equation}
\gamma = ( \tau_0, \tau'_1, \tau_1, \ldots ,  \tau'_k, \tau_k, \tau'_{k+1}),
\end{equation}
where $\tau_0 = \tau,$ $\tau'_{k+1} = \tau',$ and $\tau'_j, \tau_j$ are distinct facets contained in a common $n$--simplex $\sigma_j$ and $\tau'_{j+1} = \varphi_{\tau_j}(\tau_j)$ for all $j\in \{0,\ldots k\}.$ Again following Joswig~\cite{Joswig2002}, we define the \emph{projectivity} 
$$p_\gamma \co \sigma_{\tau_0} \to \sigma_{\tau'_{k+1}}$$ by
$$p_\gamma = p_{\tau_{k}} \circ \ldots \circ p_{\tau_{1}}\circ p_{\tau_{0}}.$$
In the case where $\sigma_0=\sigma_{k+1},$ the facet path $\gamma$ is termed a \emph{facet loop}. In this case, the projectivity $p_\gamma$ is a permutation of the vertices of $\sigma_0,$ and termed a \emph{projectivity of $\widehat{M}$ based at $\sigma_0.$} 

Using concatenation as the group operation, the set of all projectivities of $\widehat{M}$ based at $\sigma_0$ forms a subgroup of $\Sym(\sigma_0),$ which is denoted $\Pi(\widehat{M}, \sigma_0).$

If $\gamma$ is any facet path from $\sigma_0$ to another $n$--simplex $\sigma,$ then 
$$\Pi(\widehat{M}, \sigma) = p_{\gamma} \Pi(\widehat{M}, \sigma_0) p_{\gamma^{-1}}.$$
It follows that $\Pi(\widehat{M}, \sigma)$ can be identified with a subgroup of the symmetric group on $n+1$ letters, $\Sym({n+1}),$ up to inner automorphisms of $\Sym({n+1}).$

From now on, \textbf{we will only address the case} $\mathbf{n\ge 3},$ as modifications are required for surfaces, and our methods do not yield interesting results in this case.

Let $x_0$ be the image in $\widehat{M}$ of the barycentre of $\sigma_0.$ Then for each loop in $\widehat{M}$ based at $x_0$ and contained in the dual graph of the triangulation, we obtain a facet loop based at $\sigma_0$ obtained by recording the sequence of facets which the loop crosses. Whence there is a well-defined projectivity $p_\gamma$ based at $\sigma_0$ associated with any loop $\gamma$ in the dual 1--skeleton. The fundamental group of $M$ is finitely presented with generators represented by loops in the dual 1--skeleton, and relators arising from the cells in the dual 2--skeleton. 

%

\begin{lemma}[Canonical symmetric representation]\label{lem:canonical rep}
Suppose $n\ge3.$ The assignment 
$$\pi_1(M, x_0)\ni[\gamma] \to p_\gamma \in \Sym(\sigma_0)$$ 
is a well-defined homomorphism if and only if the triangulation of $\widehat{M}$ is even.
\end{lemma}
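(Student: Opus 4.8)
The plan is to verify that the map $[\gamma]\mapsto p_\gamma$ descends to $\pi_1(M,x_0)$ exactly when the evenness condition holds. First I would observe that the assignment $\gamma\mapsto p_\gamma$ is already well-defined on the level of facet loops in the dual $1$--skeleton: a loop in the dual graph based at $x_0$ crosses a definite sequence of facets, and $p_\gamma$ is the corresponding composite of perspectivities. Concatenation of loops corresponds to composition of projectivities, and the relation $p_{\varphi_\tau(\tau)}\circ p_\tau = \mathrm{id}$ noted after the definition of perspectivity shows that a backtracking loop (cross a facet and immediately cross back) gives the identity permutation; hence $p$ is defined on the free group generated by the edges of the dual $1$--skeleton and is insensitive to backtracking. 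So the only thing to check is that $p_\gamma$ depends only on the homotopy class of $\gamma$ rel $x_0$, equivalently that $p_\gamma = \mathrm{id}$ whenever $\gamma$ bounds in $M$.

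Next I would reduce this to the $2$--dimensional dual cells. Since $\pi_1(M,x_0)$ is presented with generators the edges of the dual $1$--skeleton and relators the boundaries of the dual $2$--cells, the homomorphism property of $[\gamma]\mapsto p_\gamma$ is equivalent to the statement that $p_{\partial c} = \mathrm{id}$ for every dual $2$--cell $c$. A dual $2$--cell is dual to an $(n-2)$--singlex $e$; its boundary is the facet loop that winds once around $e$, crossing in cyclic order the facets incident to $e$. Concretely, lift to $\widetilde\Delta$: the $n$--simplices containing a preimage of $e$ are arranged in a cycle $\sigma_{j}$, consecutive ones glued along a facet containing that preimage of $e$, and the loop $\partial c$ traverses this cycle once, with total length equal to the degree $d$ of $e$.

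The heart of the argument is then the following local computation: each perspectivity across a facet incident to $e$ acts on the vertex set $\sigma_0$ (identified, via the chain of perspectivities, with $\{0,\dots,n\}$) by fixing the $n-1$ labels on the codimension--$2$ face corresponding to $e$ and transposing the two remaining labels — the label on the near opposite corner and the label on the far opposite corner. Going once around $e$, these transpositions compose; because the two "moving" slots are swapped at each step while the other $n-1$ slots are held fixed, the composite of $d$ such maps is the identity on the $n-1$ fixed labels and is the $d$-fold product of transpositions of a $2$-element set, hence the identity precisely when $d$ is even. Thus $p_{\partial c}=\mathrm{id}$ for all dual $2$--cells if and only if every $(n-2)$--singlex has even degree, i.e. the triangulation is even. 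This gives both directions: evenness implies all relators map to $\mathrm{id}$, so the homomorphism is well-defined; conversely, if some $(n-2)$--singlex $e$ has odd degree, the relator $\partial c$ dual to $e$ maps to a transposition $\ne \mathrm{id}$, so no well-defined homomorphism exists. (One should note that here $n\ge 3$ is used so that $(n-2)$--singlices, their dual $2$--cells, and the fundamental group of $M$ behave as described; this is exactly the standing assumption.)

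I expect the main obstacle to be bookkeeping rather than conceptual: one must set up the identification of the vertices of the various $\sigma_j$ around $e$ with a fixed $\{0,\dots,n\}$ carefully enough that "the perspectivity fixes the $n-1$ labels of the face dual to $e$ and swaps the two opposite-corner labels" is literally true, including keeping track of which corner is "near" and which is "far" as one proceeds around the cycle, and confirming that the cumulative effect after one full loop is the claimed product of transpositions. Care is also needed when a single $n$--simplex meets $e$ in more than one $(n-2)$--face or a facet is glued to itself, so that the cycle of $n$--simplices around $e$ may repeat simplices; the argument should be phrased upstairs in $\widetilde\Delta$ where the link of (a preimage of) $e$ is an honest cycle of length $d$, which sidesteps these degeneracies.
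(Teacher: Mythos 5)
Your proposal is correct and takes essentially the same approach as the paper: reduce to the relators given by dual $2$--cells and show that the projectivity around an $(n-2)$--singlex of degree $d$ is trivial if and only if $d$ is even. The paper simply asserts this last fact, whereas you supply the local computation (each perspectivity around the cycle fixes the $n-1$ vertex labels on the codimension--$2$ face and swaps the two remaining ones, so the composite after $d$ steps is a $d$-fold transposition), which is a welcome elaboration of the step the paper leaves implicit.
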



\begin{proof}
Since every element in the fundamental group is represented by a simplicial path in the dual 1--skeleton, and all relations arise from the cells in the dual 2--skeleton, it remains to show that the homomorphism is well defined if and only if the degree of each $(n-2)$--cell in $\widehat{M}$ is even.

A loop $\gamma$ in the dual 1--skeleton based at $x_0$ and abutting the cell $c$ in the dual 2--skeleton represents the trivial element in $\pi_1(M; x_0).$ The associated element $\rho(\gamma) \in \Sym(\sigma_0)$ is trivial if and only if the degree of the $(n-2)$--cell dual to $c$ is even. Similarly, given any cell $c$ in the dual 2--skeleton, choose a path $\alpha$ in the dual 1--skeleton from $x_0$ to the barycentre of an $n$--singlex incident with $c$ and a loop $\beta$ abutting $c.$ Then $0 = [\alpha\beta][\alpha]^{-1},$ and the associated permutation is trivial if and only if the degree of the $(n-2)$--cell dual to $c$ is even.
Hence all relators in the fundamental group give trivial images in $\Sym(\sigma_0)$ if and only if the triangulation is even. 
\end{proof}

\begin{remark}
The homomorphism $\pi_1(M, x_0)\ni[\gamma] \to p_\gamma \in \Sym(\sigma_0)$ factors through $\pi_1(\widehat{M}, x_0)$ if the link of each vertex has trivial fundamental group (for instance, is an $(n-1)$--sphere). Simple examples where it does not factor are given by the ideal triangulations of knot and link complements in the 3--sphere (see Example~\ref{exa:fig 8}).
\end{remark}

\begin{remark}
If a triangulation is not even, one can use the null-homotopic loops with non-trivial projectivities to construct canonical branched coverings; see Izmestiev and Joswig \cite{IJ-2003} for an application.
\end{remark}

\begin{proposition}[few vertices implies non-trivial representation]\label{pro:few vert give non-trivial}
If the vertices $v$ and $v'$ of the $n$--simplex $\sigma_0$ have the same image in $\widehat{M}$ under the map $p\co \widetilde{\Delta} \to \widehat{M},$ then there is $[\gamma] \in \pi_1(M, x_0)$ with $p_\gamma(v)=v'.$
In particular, if an even triangulation has fewer than $n+1$ vertices, then the canonical symmetric representation has non-trivial image, and so the fundamental group of $M$ is non-trivial.
\end{proposition}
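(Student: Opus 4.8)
The plan is to prove the displayed assertion first, and then deduce the ``in particular'' part by a pigeonhole count together with Lemma~\ref{lem:canonical rep}.

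For the displayed assertion, recall that $p\co \widetilde\Delta \to \widehat M$ is the quotient by the equivalence relation generated by the face pairings, and that $\Phi$ is closed under inverses. Hence $p(v)=p(v')$ means there is a finite chain of corners $v = c_0, c_1, \ldots, c_m = v'$ of $\widetilde\Delta$ with $c_{j+1} = \varphi_j(c_j)$ for face pairings $\varphi_j \in \Phi$; since $c_j$ lies in the domain of $\varphi_j$, which is some facet $\tau_j,$ the corner $c_j$ is a corner of $\sigma_{\tau_j}$ not opposite to $\tau_j.$ From this chain I would read off a facet path whose successive ``outgoing'' facets are $\tau_0, \tau_1, \ldots, \tau_{m-1}$: because every corner of $\widetilde\Delta$ lies in a \emph{unique} $n$--simplex, the sequence of $n$--simplices visited is forced (namely $\sigma_{\tau_j}$ at stage $j$), the path starts and ends at $\sigma_0$ (since $c_0 = v$ and $c_m = v'$ both lie in $\sigma_0$), and after deleting backtracking pairs --- consecutive facets with $\tau_j = \varphi_{\tau_{j-1}}(\tau_{j-1}),$ whose removal changes neither the composite of perspectivities (by the identity $p_{\varphi_\tau(\tau)}\circ p_\tau=\operatorname{id}$) nor the homotopy class --- one obtains a genuine facet loop $\gamma$ based at $\sigma_0.$ If the chain collapses entirely then $v=v'$ and one takes the constant loop.

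It then remains to observe two things. First, the perspectivity $p_{\tau_j}$ agrees with $\varphi_{\tau_j}$ on $\tau_j$ (it is the reflection across $\tau_j$), so it carries $c_j \in \tau_j$ to $\varphi_{\tau_j}(c_j) = c_{j+1}$; hence the projectivity $p_\gamma = p_{\tau_{m-1}} \circ \cdots \circ p_{\tau_0}$ sends $v = c_0$ to $c_m = v'.$ Second, a facet loop based at $\sigma_0$ is exactly an edge loop in the dual $1$--skeleton based at $x_0,$ so it represents a class $[\gamma]\in \pi_1(M,x_0),$ and this class satisfies $p_\gamma(v)=v',$ which is the displayed assertion. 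The only delicate point is the bookkeeping of the previous paragraph: making the visited $n$--simplices close up at $\sigma_0,$ removing backtracking so that the distinctness requirement in the definition of a facet path holds, and verifying that these manipulations leave the composite perspectivity and the homotopy class unchanged. I expect this to be the main (though routine) obstacle.

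For the ``in particular'' statement, assume the even triangulation has at most $n$ vertices. Then $p$ maps the $n+1$ corners of $\sigma_0$ into a set of size at most $n,$ so by the pigeonhole principle there are distinct corners $v \ne v'$ of $\sigma_0$ with $p(v)=p(v').$ By the displayed assertion there is $[\gamma]\in\pi_1(M,x_0)$ with $p_\gamma(v) = v' \ne v,$ so $p_\gamma$ is not the identity permutation of $\sigma_0.$ Since the triangulation is even, Lemma~\ref{lem:canonical rep} says the canonical symmetric representation $[\gamma]\mapsto p_\gamma$ is a well-defined homomorphism $\pi_1(M,x_0)\to\Sym(\sigma_0)$; its image contains the nontrivial element $p_\gamma$ and is therefore nontrivial, and since $\pi_1(M,x_0)$ surjects onto this image, $\pi_1(M)$ is nontrivial. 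Equivalently, the displayed assertion shows that the number of orbits of $\Pi(\widehat M,\sigma_0)$ on the corners of $\sigma_0$ is at most the number of vertices of the triangulation, so when the latter is less than $n+1$ the action on the $n+1$ corners cannot be trivial.
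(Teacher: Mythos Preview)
Your proof is correct and follows the same approach as the paper's, which is essentially a one-line argument (``The identification of $v$ and $v'$ arises from some facet loop $\gamma$ based at $\sigma_0$; if $v\neq v'$ then $p_\gamma$ is non-trivial''). You have carefully unpacked the bookkeeping the paper leaves implicit---constructing the facet loop from a chain of face pairings, handling backtracking, and making the pigeonhole argument explicit---but the underlying idea is identical.
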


\begin{proof}
The identification of $v$ and $v'$ arises from some facet loop $\gamma$ based at $\sigma_0.$ If $v\neq v',$ then the associated permutation $p_\gamma$ is non-trivial. 
\end{proof}

For example, this shows that the $n$--sphere does not admit an even triangulation with fewer than $n+1$ vertices. However the double of an $n$--simplex is an even degree triangulation of $S^n$ with exactly $n+1$ vertices. 

\begin{corollary}
If the canonical symmetric representation has trivial image, then the vertices in $\widehat{M}$ can be consistently labelled $\{1,2,\ldots, n+1\}$ and there must be at least $n+1$ vertices.
\end{corollary}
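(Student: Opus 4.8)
The plan is to note first that the lower bound on the number of vertices is just the contrapositive of the second assertion of Proposition~\ref{pro:few vert give non-trivial}, so the real content is the existence of the consistent labelling. For that I would promote the labelling of the base simplex $\sigma_0$ to a labelling of all of $\widetilde{\Delta}$ and check that it is constant on the fibres of $p\co\widetilde{\Delta}\to\widehat{M}$ over the $0$--skeleton; the hypothesis of trivial image is precisely what makes this promotion unambiguous.

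First I would restate the hypothesis in a usable form. That the canonical representation has trivial image means, via Lemma~\ref{lem:canonical rep}, that the triangulation is even and that $p_\gamma=\mathrm{id}$ for \emph{every} facet loop $\gamma$ based at $\sigma_0$: each facet loop records a loop in the dual $1$--skeleton based at $x_0$, and every class of $\pi_1(M,x_0)$ is represented by such a loop, so its image under the canonical representation is the corresponding projectivity. From this I would deduce that perspectivities compose path-independently out of $\sigma_0$: if $\gamma_1,\gamma_2$ are facet paths from $\sigma_0$ to a common $n$--simplex, then $\gamma_1$ followed by $\gamma_2^{-1}$ is a facet loop based at $\sigma_0$, and since reversing a facet path inverts its projectivity (because $p_{\varphi_\tau(\tau)}\circ p_\tau=\mathrm{id}$), triviality gives $p_{\gamma_1}=p_{\gamma_2}$.

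Next I would fix the bijective labelling of the corners of $\sigma_0$ by $\{0,1,\dots,n\}$ and transport it. Each perspectivity $p_\tau$ is, on corners, the restriction of a simplicial isomorphism $\sigma_\tau\to\sigma_{\varphi_\tau(\tau)}$, so dragging the labelling of $\sigma_0$ along any facet path to an $n$--simplex $\sigma$ yields a bijection between $\{0,\dots,n\}$ and the corners of $\sigma$; by path-independence this bijection does not depend on the path, and since $\widehat{M}$ is connected one obtains a labelling $\ell$ of every corner of $\widetilde{\Delta}$ in which each $n$--simplex carries each label exactly once. To see that $\ell$ descends to $\widehat{M}^{(0)}$, recall that the $p$--fibres over $\widehat{M}^{(0)}$ are the classes of the equivalence relation on corners generated by $c\sim\varphi_\tau(c)$ for $c$ a corner of a facet $\tau$, so it is enough to check $\ell(\varphi_\tau(c))=\ell(c)$ in this situation. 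For this, take a facet path $\gamma$ from $\sigma_0$ to $\sigma_\tau$ and extend it by the single step across $\tau$; since $p_\tau$ restricts on the corners of $\tau$ to the face pairing $\varphi_\tau$, transport along the extended path carries the label that $\gamma$ assigns to the corner $c\in\tau$ onto the corner $\varphi_\tau(c)$, so $\ell(\varphi_\tau(c))=\ell(c)$. Hence $\ell$ induces a well-defined labelling $L\co\widehat{M}^{(0)}\to\{0,1,\dots,n\}$ under which every $n$--singlex meets every label --- equivalently, a nondegenerate simplicial map from $\widehat{M}$ to the standard $n$--simplex --- and, $L$ being well defined, the $n+1$ corners of $\sigma_0$, which carry all $n+1$ labels, have $n+1$ pairwise distinct images in $\widehat{M}$.

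The one step requiring genuine care is the descent of $\ell$: it turns on the compatibility between a perspectivity and the face pairing it is built from --- namely that on the corners of the shared facet the perspectivity \emph{is} that face pairing --- together with the path-independence extracted from triviality of the representation. Everything else is bookkeeping about how perspectivities permute corners.
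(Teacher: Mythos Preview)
Your argument is correct. The paper does not give a separate proof of this corollary, treating it as immediate from Proposition~\ref{pro:few vert give non-trivial}; you have spelled out precisely the reasoning that is implicit there --- path-independence of projectivities from triviality of the image, transport of the base labelling along facet paths, and descent via compatibility of perspectivities with face pairings on shared corners --- and your care about the descent step is well placed, as that is indeed the only point with any content.
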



\subsection{Examples (Revisited)}
\label{subsec:examples revisited}

The next step in our program is again introduced with an informal discussion and some 3--dimensional examples, which are then distilled into an algebraic theory in arbitrary dimensions.

Given a closed, triangulated 3--manifold $M,$ place three quadrilateral discs in each tetrahedron, one of each type as shown in Figure~\ref{fig:quads}, such that the result is a (possibly branched immersed) surface in $M.$ 
\begin{figure}[h!]
\psfrag{0}{{\small $0$}}
\psfrag{1}{{\small $1$}}
\psfrag{2}{{\small $2$}}
\psfrag{3}{{\small $3$}}
\begin{center}
\includegraphics[scale=0.4]{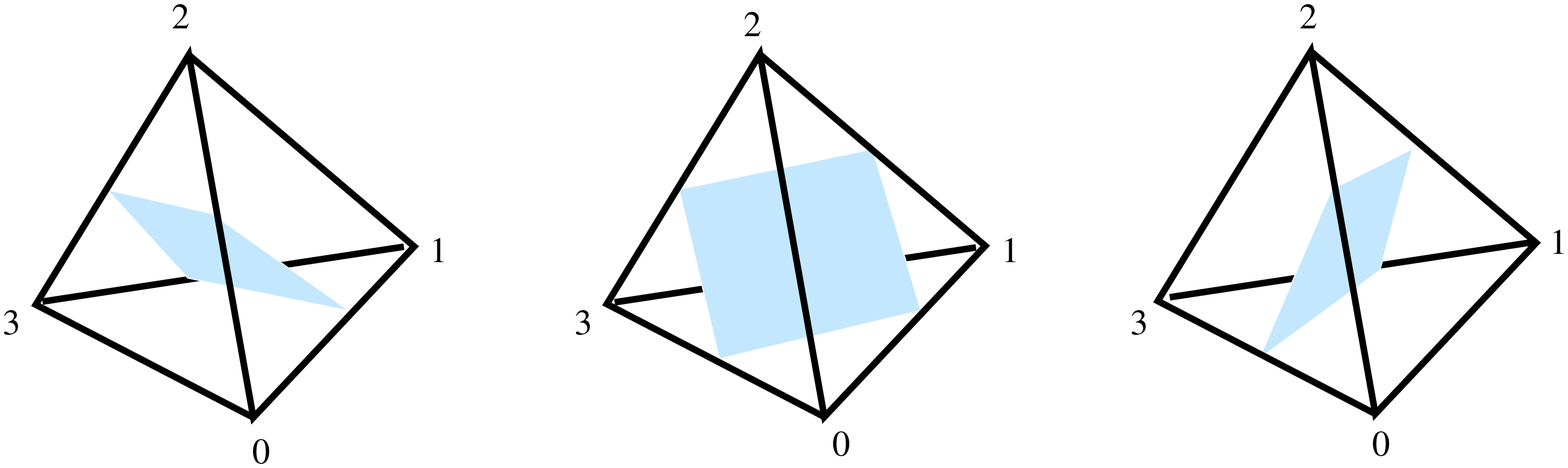}
\caption{The three types of quadrilaterals in a 3--simplex}\label{fig:quads}
\end{center}
\end{figure}

If one chooses a labelling of the three quadrilateral discs, one can again propagate the labelling chosen in some base tetrahedron across all faces to a labelling of the quadrilaterals in the adjacent tetrahedra. As before, if the degree of each edge in the triangulation is even, there is an associated representation into the symmetric group $\Sym(3)$ on three letters. (Even degree also implies that the surface has no branch points, and hence is immersed.) We will show that this representation, denoted $\prho$, is \emph{induced} by the canonical symmetric representation $\rho$, and that its image is related to the number of components of the surface. But first, we discuss the geometric manifestation of this induced representation in our examples.

\begin{example}[(Quaternionic space)]
The quadrilateral surface in the triangulation of quaternionic space has three components; each is an embedded Klein bottle meeting each tetrahedron in exactly one quadrilateral disc. If one chooses a labelling in a base tetrahedron and propagates it, one obtains the trivial permutation associated to each generator of the fundamental group since otherwise one would obtain a contradiction to the fact that each component meets each tetrahedron in exactly one quadrilateral disc.
\end{example}

\begin{example}[(A lens space)]
The quadrilateral surface in $L(4,1)$ has two components: one is an embedded Klein bottle and the other is an immersed projective plane; see Figure~\ref{fig:L(4,1)}. Labelling the quadrilateral discs in $\sigma_0$ respectively $r$ (red), $b$ (blue) and $g$ (green), one obtains labels of the normal arcs on each face and is interested on the effect of the face pairings on the labels. Suppose that the labels are $[0, b; 1, r; 2, g]$ on face $[0,1,2].$ We have $[0, b; 1, r; 2, g] \to [3, g; 0, r; 1, b]$ and $[0, g; 2, b; 3, r]\to [3,b; 1,g; 2, r].$ Whence $r$ is fixed (corresponding to the Klein bottle) and $g$ and $b$ are transposed by each face pairing, giving a representation $\pi_1(L(4,1)) \to \Sym(3)$ with image isomorphic with $C_2.$
\end{example}
\begin{figure}[h!]
\psfrag{0}{{\small $0$}}
\psfrag{1}{{\small $1$}}
\psfrag{2}{{\small $2$}}
\psfrag{3}{{\small $3$}}
\centering
\includegraphics[scale=0.5]{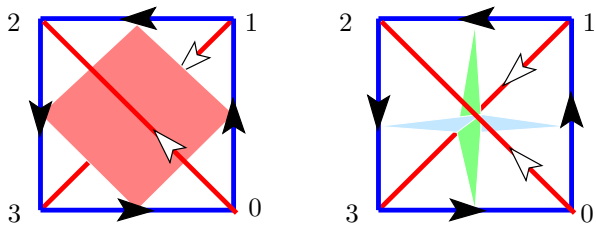}
\caption{The embedded Klein bottle in $L(4,1)$ is shown on the left, and the immersed projective plane on the right.} \label{fig:L(4,1)}
\end{figure}

\begin{example}[(The figure eight knot complement)]
The quadrilateral surface has one component of Euler characteristic $-2.$ Choosing a labelling in one tetrahedron and propagating, it is not difficult to compute that the image is isomorphic with $C_3.$ The whole class of once-punctured torus bundles, into which this example fits, is discussed in the next example.
\end{example}

\begin{example}[(Once-punctured torus bundles)]\label{exa:torus bundles}
A nice class of examples is the canonical triangulations of the once-punctured torus bundles over the circle. (See \cite{fg}.)
These all have the property that they are ideal triangulations for which all edges are of even degree and there is a single ideal vertex. There are different behaviours of the symmetric representation $\rho$. For the figure-eight knot complement given from above, the monodromy of the bundle has trace $3$ and the canonical triangulation gives rise to the representation $\prho$ into $\Sym(3)$ with image $C_3$ and the symmetric representation $\rho$ with image $\Alt(4)$. In this case, there are no elements of $H_1(M,\Z_2)$ other than coming from the meridian which is a boundary element. There are many other examples of this type in the class of once-punctured torus bundles. 

The monodromy of a punctured torus bundle $M$ maps to an element of $SL(2,2) \cong \Sym(3)$. It is not difficult to verify for the canonical triangulations that the induced symmetric representation $\prho$ has image which is cyclic of order the same as that of the image of the monodromy in $SL(2,2)$. The reason is that if we pass to the cyclic covering space $\widetilde{M}$ corresponding to the kernel, then clearly the monodromy has image the identity element in  $SL(2,2)$. But then $H_1(\widetilde{M},\Z_2)$ has rank $3$ and the quadrilateral surface splits into three embedded non-orientable components. In general, the quadrilateral surface has one, two or three components which are all non-orientable, and the  symmetric representation $\rho$ has image $\Alt(4)$, $D_4$ or $C_2\times C_2$ respectively. 
\end{example}

\begin{remark}[(Minimal triangulations)]
Even triangulations and quadrilateral surfaces seem to play a special role in the search for minimal triangulations of closed 3--manifolds---see \cite{JRT,JRT-1,JRT-2}. However, it is \emph{not} true that if a $3$--manifold satisfies the conditions required to have a one-vertex even triangulation, then amongst its minimal triangulations there is one of this type; see Example~\ref{exa:minimal counter}.
\end{remark}


\subsection{Symmetric representations from partitions}
\label{subset:sum reps from partitions}

Returning to the case of arbitrary dimension and continuing in the notation from \S\S\ref{subsec:pseudo-manifolds}--\ref{subsec:Symmetric representations}, suppose that the triangulation of $\widehat{M}$ is even. The $n$--simplex $\sigma_0$ is identified with the set $\{1, 2, ..., n+1\},$ and we denote the canonical symmetric representation $\rho\co \pi_1(M)\to \Sym(n+1).$

Taking a partition of $\sigma_0,$ one can act on this by the induced representation. For the purpose of this paper, the treatment is restricted to partitions into two sets, one of size $k$ and the other of size $n-k+1.$ For instance, 
$$\{\;\{1, 2,..., k\},\;\{ k+1,..., n+1\}\;\} \mapsto \{\;\{\rho(1), \rho(2),..., \rho(k)\},\;\{ \rho(k+1),..., \rho(n+1)\}\;\}.$$
This gives an induced representation $\prho={\rho}_{k\mid n-k+1}\co \pi_1(M)\to\Sym(N)$, where $N = {n+1 \choose k}$ unless the two sets are of equal size, in which case $N = \frac{1}{2}{n+1 \choose k}$ since the order of the two sets in the partition does not matter. The order of the two sets in the partition being irrelevant also implies that one may assume $k \le \frac{n+1}{2}.$ Moreover, we require $k>1,$ since the induced representation for $k=1$ is conjugate to $\rho.$

It follows that there are $\lfloor  \frac{n-1}{2} \rfloor$ induced representations from partitions into two sets. For $n\ge 5,$ there are at least two. For $n\ge 4,$ we always have $n+1 < N.$ We now discuss the cases of lowest dimension.


For $n=4$, there is a unique induced representation, ${\rho}_{2\mid 3},$ arising from a subdivision into sets of sizes 2 and 3 respectively, with target $\Sym(10).$

For $n=3$, there is a unique induced representation, arising from a subdivision into two sets of size 2. The target of the induced representation is $\Sym(3).$ This is the only case where $N < n+1.$ Moreover, it is easy to see the relationship between the canonical and the induced representations:

\begin{proposition}\label{pro:sym factors for n=3}
Let $K$ be the Klein $4$--subgroup of $\Sym(4)$ and $p\co \Sym(4) \to \Sym(3)$ be the epimorphism with kernel $K$. Then ${\rho}_{2\mid 2}=p \circ \rho$.
\end{proposition}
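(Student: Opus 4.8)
The plan is to unwind the definition of the induced representation $\rho_{2\mid 2}$ and to recognise the homomorphism $\Sym(4)\to\Sym(3)$ through which it factors as the standard quotient by the Klein four-group. Concretely, a partition of $\sigma_0=\{1,2,3,4\}$ into two sets of size $2$ is one of the three objects $P_1=\{\{1,2\},\{3,4\}\}$, $P_2=\{\{1,3\},\{2,4\}\}$, $P_3=\{\{1,4\},\{2,3\}\}$ (these are exactly the three quadrilateral types of Figure~\ref{fig:quads}). By the very definition in \S\ref{subset:sum reps from partitions}, $\rho_{2\mid 2}$ is the composite of $\rho\co\pi_1(M)\to\Sym(4)$ with the permutation action $a\co\Sym(4)\to\Sym(\{P_1,P_2,P_3\})\cong\Sym(3)$ of $\Sym(4)$ on this three-element set. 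So the whole content is to show that $a$ coincides with $p$.

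First I would verify surjectivity of $a$ by exhibiting the images of a couple of elements: the transposition $(1\,2)$ fixes $P_1$ and interchanges $P_2$ and $P_3$, while the $3$-cycle $(1\,2\,3)$ permutes the three partitions cyclically (it sends $P_1\mapsto P_3\mapsto P_2\mapsto P_1$). A transposition together with a $3$-cycle generate $\Sym(3)$, so $a$ is onto. Next I would compute $\ker a$: each of the three double transpositions $(1\,2)(3\,4),\ (1\,3)(2\,4),\ (1\,4)(2\,3)$ fixes every $P_i$ (for instance $(1\,2)(3\,4)$ preserves both $\{1,2\}$ and $\{3,4\}$, and interchanges $\{1,3\}\leftrightarrow\{2,4\}$ and $\{1,4\}\leftrightarrow\{2,3\}$, hence fixes each unordered pair of pairs), so $K\subseteq\ker a$; and since $|\Sym(4)|/|\Sym(3)|=4=|K|$ together with surjectivity forces $|\ker a|=4$, we get $\ker a=K$. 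Thus $a$ is an epimorphism $\Sym(4)\to\Sym(3)$ with kernel $K$, i.e. $a=p$, and therefore $\rho_{2\mid 2}=a\circ\rho=p\circ\rho$.

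There is essentially no real obstacle here; the only point that deserves a sentence is the identification of the ``abstract'' $\Sym(3)$ appearing in the statement with $\Sym(\{P_1,P_2,P_3\})$. Since any two epimorphisms $\Sym(4)\to\Sym(3)$ with kernel $K$ differ by an automorphism of $\Sym(3)$, and every automorphism of $\Sym(3)$ is inner, the equality $\rho_{2\mid 2}=p\circ\rho$ holds once this identification is fixed, and in any case both sides are only defined up to conjugacy in the target, so the statement is unambiguous. (This is just the classical exceptional isomorphism $\Sym(4)/K\cong\Sym(3)$ phrased in terms of the three pair-partitions.)
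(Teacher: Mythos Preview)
Your argument is correct and follows essentially the same approach as the paper: both recognise that $\rho_{2\mid 2}$ is by definition the composite of $\rho$ with the action of $\Sym(4)$ on the three $2\mid 2$ partitions, and both identify this action as the quotient by $K$. The paper's proof is slightly slicker in that it identifies the partition $\{\{a,b\},\{c,d\}\}$ with the element $(ab)(cd)\in K$, so that the action on partitions becomes the conjugation action of $\Sym(4)$ on $K$, whose kernel is visibly $K$; but your direct computation achieves the same thing.
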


\begin{proof}
Identifying the partition $\{ \{a, b\}, \{c, d\}\}$ with the permutation $(ab)(cd),$ the action of $\Sym(4)$ on the partitions is identified with its action on $K$ by conjugation. The kernel of this action is $K.$
\end{proof}



\section{Normal hypersurface theory}
\label{sec:normal hyper}

We define normal hypersurface theory in arbitrary dimensions, and show that eveness is a necessary and sufficient condition for immersion of certain hypersurfaces, and the symmetric representations give such conditions for embedding. This also gives further results linking the topology of the manifold with the combinatorics of an even triangulation.


\subsection{Algebraic notions}

Continuing in the notation from \S\ref{subsec:pseudo-manifolds}, but not assuming that our triangulation is even, we now define a generalisation of normal surface theory to arbitrary dimensions.

Let $\sigma = \{ 1, 2, \ldots, n+1\}$ be an $n$--simplex. A \emph{$(k, n-k+1)$--normal disc type} or \emph{normal disc type} in $\sigma$ is a partition into two subsets
$$\{\;\{a_1, a_2,..., a_k\},\;\{ a_{k+1},..., a_{n+1}\}\;\},$$
where we assume that $1\le k \le \frac{n+1}{2}.$ Geometrically, a $(k, n-k+1)$--normal disc is viewed as a properly embedded linear cell in the standard $n$--simplex separating the vertices into two sets as indicated by the partition (see below). For instance, $(1,2)$--normal discs are usually termed \emph{normal arcs}, $(1,3)$--normal discs are \emph{normal triangles} and $(2,2)$--normal discs are \emph{normal quadrilaterals}. The normal discs defined here correspond to normal isotopy classes of the usual geometric objects.

Introduce one real variable $x_\nu$ for each normal disc type $\nu$ in $\widetilde{\Delta}.$ Notice that a normal disc in $\sigma$ meets any subsimplex of $\sigma$ in a normal disc or the empty set. Associated with each normal disc type $\mu$ of each facet $\tau$ of $\widetilde{\Delta},$ there is one \emph{matching equation $E_\mu$} defined as follows:
\begin{equation}\tag{$E_\mu$}
\sum_{\nu\;:\;\nu \cap \tau = \mu} x_\nu \quad = \quad \sum_{\nu\;:\;\nu \cap \varphi_\tau(\tau) = \varphi_\tau(\mu)} x_\nu,
\end{equation}
where in both sums $\nu$ ranges over all normal disc types in $\widetilde{\Delta}.$ The matching equations for $\mu$ and $\varphi_\tau(\mu)$ are identical. 

Given \emph{any} $(k, n-k+1)$--normal disc type $\nu,$ there is a solution $\mathbf{x}$ to the matching equations with entries in $\{0,1\}.$ This solution can be generated using perspectivities as follows. For any facet $\tau$ of $\sigma,$ the perspectivity $p_\tau$ determines the partition $p_\tau(\nu)$ of $\sigma_{\varphi_\tau(\tau)},$ and these two partitions satisfy the matching equation $E_\mu$ for each normal disc type $\mu$ of $\tau.$ One now needs to propagate this process, inductively applying perspectivities until all matching equations are satisfied (in particular, the perspectivity $p_\tau$ may need to be applied multiple times during this process). All normal disc types generated by this process are $(k, n-k+1)$--normal disc types, and more than one normal disc type supported by $\sigma$ may be obtained. However, the process will terminate as there are only finitely many $(k, n-k+1)$--normal disc types, and the resulting collection of normal disc types satisfies all matching equations. This process clearly generates the solution to the matching equations having smallest weight amongst all solutions with $x_\nu = 1$ and only involving $(k, n-k+1)$--normal disc types. In particular, this argument yields the following lemma:

\begin{lemma}[$(k,n-k+1)$--solutions]\label{lem:(k,n-k+1) solutions}
Define $\mathbf{x} = (x_\nu)$ by $x_\nu = 1$ if $\nu$ is of type $(k,n-k+1),$ and $x_\nu=0$ otherwise. Then $\mathbf{x}$ is a solution to the matching equations.
\end{lemma}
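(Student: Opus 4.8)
The plan is to verify directly that the proposed $0$–$1$ vector satisfies each matching equation $E_\mu$, exploiting the combinatorial structure of how a $(k,n-k+1)$–normal disc type restricts to a facet. Fix a facet $\tau$ of $\widetilde\Delta$ with opposite corner $v = \sigma_\tau\setminus\tau$, and fix a normal disc type $\mu$ of $\tau$. The key observation is that $\mu$ is itself a partition of the $(n-1)$–simplex $\tau$, say into parts of sizes $(j, n-j)$ with $1 \le j$; a normal disc type $\nu$ of $\sigma_\tau$ satisfies $\nu\cap\tau = \mu$ precisely when $\nu$ is obtained from $\mu$ by inserting the extra vertex $v$ into one of the two parts. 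Thus there are at most two such $\nu$: one where $v$ joins the size-$j$ part (yielding type $(j{+}1, n{-}j)$) and one where $v$ joins the size-$(n-j)$ part (yielding type $(j, n{-}j{+}1)$).

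First I would enumerate, for the given left-hand side of $E_\mu$, how many of these (at most two) extensions $\nu$ are of type $(k,n-k+1)$, and hence contribute $1$ to the sum $\sum_{\nu:\nu\cap\tau=\mu}x_\nu$. This count depends only on $j$ and $k$: it is the number of ways to write the unordered pair $\{j, n{-}j\}$, after adding one to one coordinate, as the unordered pair $\{k, n{-}k{+}1\}$. Concretely the left-hand sum equals $0$, $1$, or $2$, determined by whether $\{j{+}1, n{-}j\}$ and/or $\{j, n{-}j{+}1\}$ equals $\{k, n{-}k{+}1\}$ as unordered pairs. Then I would run the identical analysis on the right-hand side, using the facet $\varphi_\tau(\tau)$ with its opposite corner, and the normal disc type $\varphi_\tau(\mu)$, which is a partition of $\varphi_\tau(\tau)$ into parts of the same sizes $(j, n-j)$ since $\varphi_\tau$ is an affine isomorphism of simplices. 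The same count in $j$ and $k$ governs the right-hand side, so the two sides agree.

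The crux of the argument — and the one place to be careful rather than merely routine — is matching the extensions on the two sides correctly when the partition $\mu$ has parts of equal size, i.e. $n$ is even and $j = n/2$, or when $k = (n{+}1)/2$ so that the two parts of a $(k,n{-}k{+}1)$–disc are themselves nearly balanced; in those cases the unordered-pair bookkeeping must be done with some attention so as not to double-count or under-count a disc type. Once this is handled, since $\mu$ was an arbitrary normal disc type of an arbitrary facet $\tau$, every matching equation $E_\mu$ holds, proving that $\mathbf{x}$ is a solution. I would note that this is exactly the ``smallest-weight'' solution produced by the perspectivity-propagation procedure described just above the statement, so one could alternatively deduce the lemma from that discussion; but the direct verification is short and self-contained, so I would present that.
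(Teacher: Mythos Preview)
Your direct verification is correct and complete in outline: the key point is that the number of $(k,n{-}k{+}1)$--disc types in $\sigma_\tau$ restricting to a given $\mu$ on $\tau$ depends only on the part sizes $(j,n{-}j)$ of $\mu$, and since $\varphi_\tau$ preserves these sizes, the two sides of $E_\mu$ agree. The edge cases you flag (balanced $\mu$ or $n$ odd with $k=(n{+}1)/2$) do not cause trouble once you observe that the two extensions $\{C\cup\{v\},D\}$ and $\{C,D\cup\{v\}\}$ are always \emph{distinct} disc types, even when they have the same type.

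This is a genuinely different route from the paper's. The paper does not give a separate proof; rather, it deduces the lemma from the perspectivity-propagation discussion immediately preceding it: starting from any single $(k,n{-}k{+}1)$--disc type and propagating by perspectivities yields a minimal $0$--$1$ solution supported on $(k,n{-}k{+}1)$--types, and (implicitly) the all-ones vector is the sum of these minimal solutions over the orbits. Your approach is more self-contained and avoids the orbit-decomposition step; the paper's approach has the advantage of simultaneously producing the \emph{minimal} solutions, which are used later to study components of the $(k,n{-}k{+}1)$--hypersurface. One small inaccuracy in your closing remark: the lemma's vector is not itself ``the smallest-weight solution'' from propagation, but rather the sum of all such minimal solutions---though this does not affect your main argument.
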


The lemma gives a lower (albeit poor) bound on the dimension of the solution space of the matching equations. This space is well understood for dimensions 3 (see \cite{KR, T}) and 4 (see \cite{BFR}), and a general treatment would be a rewarding task. The above set-up is inspired by \cite{TONS}. 


\subsection{Geometric notions}

Whilst the partition into two subsets is convenient to work with algebraically, we now define \emph{normal discs}, which geometrically realise the partitions. Suppose that each simplex $\sigma$ in $\widetilde{\Delta}$ is isometric with a regular Euclidean simplex, and identify its \emph{set of vertices} with $\{ 1, 2, \ldots, n+1\}.$ The normal disc type $\nu = \{\;\{a_1, a_2,..., a_k\},\;\{ a_{k+1},..., a_{n+1}\}\;\}$ is represented by the convex hull $\overline{\nu}$ of the set of midpoints of all edges in $\sigma$ that have one endpoint in $\{a_1, a_2,..., a_k\}$ and the other in $\{ a_{k+1},..., a_{n+1}\}.$ We claim that this is topologically an $(n-1)$--dimensional ball, namely the intersection of $\sigma$ with a hyperplane disjoint from the two subsimplices given by the partition and meeting the complementary edges in their mid-points. The following argument is due to Jonathan Spreer. We may assume that $\sigma$ lies in the hyperplane $x_1 + \ldots + x_{n+1}=1$ in $\R^{n+1}$ and has its vertices on the coordinate axes (with $x_j$ corresponding to $a_j$). The above set of midpoints then lies in the intersection of the two hyperplanes defined respectively by:
$$
x_1 + \ldots + x_{k}=\frac{1}{2}
\quad\text{and}\quad
x_{k+1} + \ldots + x_{n+1}=\frac{1}{2}.
$$
We call $\overline{\nu}$ the \emph{standard normal $(n-1)$--disc} in $\sigma$ representing $\nu.$ More generally, a normal $(n-1)$--disc of type $\nu$ is a properly embedded PL $(n-1)$--cell that is normally isotopic to $\overline{\nu}.$
Here, a \emph{normal isotopy (homotopy)} is an isotopy (homotopy) of $\widehat{M}$ which leaves all singlices invariant.

\begin{lemma}[Branched immersed normal hypersurfaces]
Each solution $\mathbf{x} = (x_\nu)_\nu$ with non-negative integers to the system of all matching equations corresponds to a branched immersion of an $(n-1)$--dimensional pseudo-manifold into $\widehat{M}.$ We will call this a branched immersed \emph{normal hypersurface} of $\widehat{M}.$
\end{lemma}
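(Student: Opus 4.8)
The plan is to build the branched immersed normal hypersurface explicitly from the combinatorial data $\mathbf{x} = (x_\nu)_\nu$, piece by piece over the simplices of $\widetilde\Delta$, and then check that the gluings afforded by the matching equations assemble these pieces into an $(n-1)$--dimensional pseudo-manifold mapping to $\widehat M$. First I would work inside a single $n$--simplex $\sigma$ of $\widetilde\Delta$: for each normal disc type $\nu$ supported by $\sigma$, take $x_\nu$ parallel disjoint copies of the standard normal $(n-1)$--disc $\overline\nu$, normally isotoped off one another, and let $N_\sigma$ be the disjoint union of all these discs over all $\nu$. Each such disc is a PL $(n-1)$--ball, and its boundary lies in $\partial\sigma$, meeting each facet $\tau$ of $\sigma$ in a normal $(n-2)$--disc of the facet (of type $\nu\cap\tau$ when nonempty, and empty otherwise), by the observation already recorded in the excerpt that a normal disc meets a subsimplex in a normal disc or the empty set. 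So on the facet $\tau$ the collection $N_\sigma$ induces, for each normal disc type $\mu$ of $\tau$, exactly $\sum_{\nu:\nu\cap\tau=\mu} x_\nu$ parallel copies of the standard normal $(n-2)$--disc representing $\mu$.

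Next I would perform the identifications coming from the face pairings. For a facet $\tau$ with pairing $\varphi_\tau\co \tau\to\varphi_\tau(\tau)$, the left side of the matching equation $E_\mu$ counts the copies of $\mu$ on the $\tau$--side and the right side counts the copies of $\varphi_\tau(\mu)$ on the $\varphi_\tau(\tau)$--side; since $E_\mu$ holds, these counts agree, so one can choose a bijection between the two families of parallel normal $(n-2)$--discs and glue them by (PL approximations of) $\varphi_\tau$. Doing this over every facet pairing and every normal disc type of every facet produces a quotient complex $S$ together with a map $f\co S\to\widehat M$ induced by the quotient map $p$ on $\widetilde\Delta$ restricted to the union of the discs. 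I would then verify that $S$ is a closed $(n-1)$--dimensional pseudo-manifold: it is a finite $(n-1)$--complex built from $(n-1)$--balls glued along $(n-2)$--faces; each $(n-2)$--face of a disc in $N_\sigma$ lies on exactly one facet of $\sigma$ and hence gets glued to exactly one $(n-2)$--face on the other side, so each $(n-2)$--simplex of $S$ is a face of exactly two $(n-1)$--simplices; connectedness of components can be arranged componentwise, and orientability follows from orientability of $\widehat M$ and the fact that the face pairings reverse orientation so that the discs receive coherent coorientations — or one simply drops orientability if not needed. The map $f$ is simplicial with respect to a suitable subdivision and is a local embedding on the interior of each $(n-1)$--simplex and on the interior of each $(n-2)$--simplex, so it is an immersion away from the $(n-3)$--skeleton; the possible failure of injectivity of links over $\widehat M^{(n-3)}$ is exactly what the word \emph{branched} absorbs.

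The main obstacle I expect is the bookkeeping around the $(n-3)$--skeleton: around a codimension--two singlex $e$ of $\widehat M$, the normal discs incident to $e$ close up into a cycle of discs whose union is, away from $e$, an immersed annulus-like object, but this cycle need not be embedded and the combined map $f$ near $e$ is a branched covering of a neighbourhood of $e$ rather than an immersion — this is precisely where evenness would later be invoked to eliminate branching, but for this lemma I only need to record that $f$ is a branched immersion, so the task is just to check that the local model near $e$ is indeed a branched immersion of an $(n-1)$--pseudo-manifold and introduces no worse singularities. A secondary, purely technical point is making the choices of parallel copies and of the bijections on facets mutually consistent so that no disc is asked to be glued to itself in an orientation--incompatible way; this is handled by fixing, once and for all, a normal-isotopy ordering of the parallel copies of each disc type on each facet and gluing in order. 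None of this requires evenness, and all of it is routine once the single-simplex picture and the matching equations are lined up as above, so I would present the single-simplex construction and the facet-gluing count in detail and treat the pseudo-manifold and branched-immersion verifications as local checks.
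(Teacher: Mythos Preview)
Your proposal is correct and follows essentially the same route as the paper: build an abstract collection of normal $(n-1)$--discs with multiplicities $x_\nu$, use the matching equations to choose bijections between boundary $(n-2)$--discs across each face pairing, and observe that the resulting complex maps to $\widehat{M}$ as a branched immersion. The paper is terser---it takes abstract copies rather than geometrically parallel ones (so your ordering concern never arises), and it records the immersion property only away from the $(n-2)$--skeleton of $H$ rather than your sharper $(n-3)$--skeleton claim---and it explicitly notes that the normal $(n-1)$--balls can be subdivided into simplices by coning to barycentres, which is the step certifying that $H$ is a pseudo-manifold in the sense used elsewhere in the paper.
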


\begin{proof}
Given the solution $\mathbf{x},$ we will build a (non-unique) $(n-1)$--dimensional pseudo-manifold $H$ and a branched immersion $f\co H \to \widehat{M}.$ The argument is a generalisation of the one given in \cite{CT} for the case $n=3.$

The solution $\mathbf{x}$ determines an abstract collection $\mathcal{C}$ of normal discs by taking $x_\nu$ copies of the normal disc corresponding to the standard normal disc type $\nu.$ If $\nu$ meets the facet $\tau$ in $\mu,$ then the standard normal disc $\overline{\nu}$ meets $\tau$ in the standard normal disc $\overline{\mu}.$ The face pairing $\varphi_\tau$ gives an affine isomorphism between $\overline{\mu}$ and $\overline{\varphi_\tau(\mu)}.$ Since $\mathbf{x}$ satisfies the equation $E_\mu,$ we can pair all boundary facets of normal discs in $\mathcal{C}$ corresponding to $\mu$ bijectively with the boundary faces of normal discs in $\mathcal{C}$ corresponding to $\varphi_\tau(\mu).$ Doing this for all facets gives an $(n-1)$--dimensional CW complex $H$ with empty boundary together with a natural map of $H$ to $\widehat{M}$ mapping each $(n-1)$--cell in $H$ to the image of a normal disc in $\widehat{M}.$ This map is an immersion on the complement of the $(n-2)$--skeleton of $H,$ and transverse to the 1--skeleton. It hence is a branched immersion. Since each normal $(n-1)$--ball can be subdivided into $(n-1)$--simplices (for instance, by coning to its barycentre),  $H$ is an $(n-1)$--dimensional pseudo-manifold.
\end{proof}

We say that partitions $\nu$ and $\omega$ of the vertices of $\sigma$ are \emph{compatible} if we can write $\nu = \{ A, B\}$ and $\omega = \{ C, D\}$ with $C \subseteq A$ and $B \subseteq D.$ Since a normal homotopy fixes all vertices of $\sigma,$ there are disjoint normal discs of types $\nu$ and $\omega$ in $\sigma$ if and only if $\nu$ and $\omega$ are compatible.

\begin{lemma}[Embedded normal hypersurfaces]
Suppose $\mathbf{x} = (x_\nu)_\nu$ is a solution with non-negative integers to the system of all matching equations. Then $\mathbf{x}$ corresponds to an embedding of an $(n-1)$--dimensional pseudo-manifold into $\widehat{M}.$ if and only if for each $n$--simplex $\sigma$ in $\widetilde{\Delta}$ and all partitions $\nu$ and $\omega$ of the vertices of $\sigma,$ $x_\nu\neq 0$ and $x_\omega \neq 0$ only if $\nu$ and $\omega$ are compatible.
\end{lemma}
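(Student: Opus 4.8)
The forward implication follows at once from the preceding remark that a simplex $\sigma$ contains disjoint normal discs of types $\nu$ and $\omega$ if and only if $\nu$ and $\omega$ are compatible. Indeed, suppose $\mathbf{x}$ is realised by an embedded normal pseudo-manifold $S\subseteq\widehat{M}$. Since $p$ is injective on the interior of each $n$--simplex, $S$ meets every $n$--simplex $\sigma$ of $\widetilde{\Delta}$ in exactly $x_\nu$ pairwise disjoint embedded normal $(n-1)$--discs of type $\nu$, for each type $\nu$. If $x_\nu\neq 0$ and $x_\omega\neq 0$ with $\nu\neq\omega$, picking one disc of each type exhibits disjoint normal discs of types $\nu$ and $\omega$ inside $\sigma$, so these types are compatible; and when $\nu=\omega$ the types are trivially compatible, so a multiplicity $x_\nu>1$ (corresponding to parallel copies) imposes no condition. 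This is exactly the stated necessary condition.

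For the converse, assume the compatibility condition and construct an embedded realisation of $\mathbf{x}$ directly. The plan is: (i) inside each $n$--simplex, realise the prescribed discs disjointly in a way that is standard near the boundary; (ii) observe that the resulting boundary patterns on the two sides of each internal facet agree; (iii) glue. For (i), fix an $n$--simplex $\sigma$; its normal disc types $\nu$ with $x_\nu>0$ are pairwise compatible by hypothesis. The key point is that a pairwise compatible family of normal disc types in a simplex, each taken with an arbitrary positive multiplicity, is realised by a system of pairwise disjoint normal discs. Compatibility of $\nu=\{A,B\}$ and $\omega=\{C,D\}$ (with $C\subseteq A$, $B\subseteq D$) means that the separating hyperplanes can be nested; carrying this out for the whole family, the discs cut $\sigma$ into a tree of regions, and the copies of any single type are placed as parallel slices into the appropriate collar. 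One arranges simultaneously that near $\partial\sigma$ the system agrees with the standard models, so that each facet $\tau$ of $\sigma$ is met in a standard system of pairwise disjoint normal $(n-2)$--discs, with exactly $\sum_{\nu\,:\,\nu\cap\tau=\mu}x_\nu$ of type $\mu$. One also needs the companion uniqueness statement: a standard system of pairwise disjoint normal discs in a simplex is determined up to normal isotopy by its type multiplicities.

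Granting (i) together with the uniqueness statement, steps (ii) and (iii) are routine. On an internal facet, identified in pairs by $\varphi_\tau$, the matching equation $E_\mu$ says precisely that the boundary systems induced from $\sigma_\tau$ and from $\sigma_{\varphi_\tau(\tau)}$ have the same multiplicities of each type, hence coincide up to a normal isotopy of the facet; after adjusting the realisation in one of the two simplices by this isotopy we glue the discs across $\tau$ exactly as in the proof of the previous lemma. Doing this over all facets produces an $(n-1)$--dimensional pseudo-manifold $S$ with a map to $\widehat{M}$ meeting each $\sigma$ in $x_\nu$ disjoint embedded discs of type $\nu$; being locally a disjoint union of embedded discs, this map is an embedding and realises $\mathbf{x}$.

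The main obstacle is the combinatorial--geometric content of step (i): organising the nesting of an arbitrary pairwise compatible family of partitions of $\{1,\dots,n+1\}$, keeping it compatible with all face restrictions, and deducing uniqueness up to normal isotopy. In dimension three this is the familiar fact that an embedded normal surface meets a tetrahedron in parallel normal triangles together with parallel copies of at most one quadrilateral type, realised uniquely up to normal isotopy. I expect the most efficient write-up in general to either run an induction on the number of distinct types (peeling off an innermost one and working in the complementary region), or to reduce facet by facet to the three-dimensional situation.
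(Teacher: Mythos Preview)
Your argument is correct and follows the same outline as the paper: forward direction from the compatibility remark, converse by placing disjoint discs in each simplex and gluing via the matching equations after a normal isotopy. The paper's proof is in fact no more detailed than your sketch; it simply asserts that parallel copies of the $\overline{\nu}$ can be placed disjointly and that the matching equations allow gluing after a normal isotopy.

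Your hesitation over step (i) is unnecessary, and the ``tree of regions'' picture is more elaborate than needed. Fix a vertex, say $1$, and for each partition $\nu$ write $\nu=\{A_\nu,B_\nu\}$ with $1\in A_\nu$. Compatibility of $\nu$ and $\omega$ then forces $A_\nu\subseteq A_\omega$ or $A_\omega\subseteq A_\nu$ (the other two nesting options are ruled out since $1\in A_\nu\cap A_\omega$). Hence the types with $x_\nu\neq 0$ form a \emph{chain} under this containment, and the discs can be realised as a linearly ordered family of parallel slices between vertex $1$ and the opposite facet, with $x_\nu$ copies of each type in the obvious slot. This gives both existence and uniqueness up to normal isotopy in one stroke, and the restriction to any facet is again such a chain, so the boundary patterns match automatically. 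No induction or reduction to dimension three is required.
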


\begin{proof}
The forwards direction follows from the discussion before the lemma. For the converse, notice that if for each $n$--simplex $\sigma$ we have $x_\nu\neq 0$ and $x_\omega \neq 0$ only if $\nu$ and $\omega$ are compatible, then we can place a pairwise disjoint collection of normal discs in $\sigma,$ taking $x_\nu$ parallel copies of $\overline{\nu}$ for each $\nu.$ The matching equations ensure that (after a normal isotopy of each $n$--simplex) these can be glued along the facets using the face pairings.
\end{proof}


\subsection{Conditions for immersion or embedding}

For the $(k,n-k+1)$--solutions described in Lemma~\ref{lem:(k,n-k+1) solutions}, there is a \emph{unique} branched immersed normal hypersurface with this coordinate, as the weights are all zero or one. We call these the \emph{$(k,n-k+1)$--hypersurfaces} in $\widehat{M}.$ The $(1,n)$--hypersurface is the union of all vertex links, and can hence be represented by an embedding. The next result shows that even triangulations give a necessary and sufficient condition to rule out branching when $k>1.$

\begin{lemma}[$(k,n-k+1)$--hypersurface is immersion]
Suppose $k>1.$ Then the $(k,n-k+1)$--hypersurface with normal coordinate $\mathbf{x}$ is an immersion if and only if the triangulation is even.
\end{lemma}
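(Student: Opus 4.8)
The plan is to analyze branching of the $(k,n-k+1)$--hypersurface locally around each codimension--$2$ singlex of $\widehat{M}$, since the only place branching can occur is along the $(n-2)$--skeleton (the map is already an immersion away from it, and transverse to the dual $1$--skeleton, by the previous lemma). So first I would fix an $(n-2)$--singlex $e$ and describe the cyclic "book" of $n$--simplices $\sigma_1,\ldots,\sigma_d$ glued around $e$, where $d$ is the degree of $e$, with facets $\tau_0,\tau_1,\ldots,\tau_d=\tau_0$ so that $\varphi_{\tau_j}$ identifies the copy of $\tau_j$ in $\sigma_j$ with the copy in $\sigma_{j+1}$. The holonomy around $e$ is the composition of perspectivities $p_{\tau_{d-1}}\circ\cdots\circ p_{\tau_0}$, which is a projectivity of $\sigma_1$; by the discussion preceding Lemma~\ref{lem:(k,n-k+1) solutions}, this holonomy permutes the $(k,n-k+1)$--normal disc types, and the $(k,n-k+1)$--hypersurface restricted to a neighborhood of $e$ is a union of branched sheets, one for each orbit of this holonomy action on the disc types meeting $e$.

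Next I would identify, in each $\sigma_j$, exactly which $(k,n-k+1)$--normal disc types in the solution $\mathbf{x}$ actually cross $e$. The key combinatorial fact is that each perspectivity $p_{\tau_j}$ acts on the relevant disc types meeting $e$ like a transposition (it reflects the labelling across the facet $\tau_j$, which fixes the two vertices of $e$ contained in $\tau_j$ and swaps the two "opposite corner" vertices), so the holonomy around $e$ is a product of $d$ such transpositions. I would then show that for $k>1$ there is always at least one $(k,n-k+1)$--disc type whose normal arc on $\tau_j$ is \emph{not} fixed by this reflection — this is where $k>1$ enters, as for $k=1$ (vertex links) the triangle linking a vertex of $e$ is genuinely carried to the corresponding triangle and no nontrivial motion of the arc occurs. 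Consequently the sheet through such a disc type is a genuine branched disc: as one goes once around $e$, the normal $(n-1)$--disc returns to itself via the holonomy, and the number of distinct discs in the sheet is $d$ divided by the size of the orbit; the sheet is unbranched (an honest immersion near $e$) precisely when going around $e$ an appropriate number of times returns each disc to itself \emph{the first time the cycle closes up}, which happens exactly when $d$ is even. More precisely, I would compute that the local picture near $e$ is a branched cover of a disc of branching order equal to the order with which the holonomy acts on a given disc type, and that this order is $1$ (no branching) iff $d$ is even, and $2$ (honest branching) iff $d$ is odd.

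The cleanest way to package this is: near $e$, the $(k,n-k+1)$--hypersurface looks like $d$ half-disc sectors glued cyclically, and the gluing maps (the restrictions of the face pairings) each reverse the co-orientation of the disc relative to a fixed normal direction at $e$ when the disc type is moved nontrivially, so the total monodromy is orientation-type $(-1)^d$; the hypersurface is locally a manifold (immersion) iff $(-1)^d = +1$, i.e.\ $d$ even. Running this over all $(n-2)$--singlices $e$ gives: immersion iff every $(n-2)$--singlex has even degree, which is evenness. For the converse I would note that if some $e$ has odd degree, the explicit sheet constructed above is branched along $e$, so the hypersurface is not an immersion.

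I expect the main obstacle to be the bookkeeping in the middle step: precisely tracking how a perspectivity permutes the $(k,n-k+1)$--disc types meeting a given $(n-2)$--singlex, and pinning down exactly why the relevant action is "by a transposition" and why, for $k>1$, at least one disc type meeting $e$ is genuinely moved (for $k=1$ it is not, which is consistent with the $(1,n)$--hypersurface being embedded regardless of parity). Once that local permutation is identified with a single reflection, the parity argument — $d$ reflections compose to something orientation-preserving iff $d$ is even — is routine, and matching it to the definition of an even triangulation closes the proof.
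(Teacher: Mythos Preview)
Your approach is essentially the paper's: localise at an $(n-2)$--singlex $e$, observe that the holonomy around $e$ fixes the vertices of $e$ and is the transposition of the two remaining vertices $a,b$ precisely when $\deg(e)$ is odd, and then note that for $k>1$ one can choose a $(k,n-k+1)$--disc type $\nu$ meeting $e$ with $a$ and $b$ in different parts (the paper phrases this as $\nu\cap e$ being a $(k-1,n-k)$--disc), so $\nu$ returns to itself after one circuit iff $\deg(e)$ is even. One slip to fix: a facet $\tau_j$ containing $e$ contains all $n-1$ vertices of $e$, not two, and a single perspectivity is not a transposition on a fixed vertex set---it is the \emph{composite} holonomy that equals $(ab)^{?}$ with exponent of parity $\deg(e)$; your $(-1)^d$ packaging is exactly this, so the argument survives once the bookkeeping is stated for general $n$ rather than implicitly for $n=3$.
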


\begin{proof}
Suppose $\tau^{n-2}$ is an $(n-2)$--singlex in $\widehat{M},$ and let $\sigma$ be an $n$--singlex incident with $\tau^{n-2}.$ Then there is a $(k, n-k+1)$--normal disc type $\nu$ in $\sigma$ with the property that its restriction to $\tau^{n-2}$ is a $(k-1, n-k)$--normal disc type. The $(k, n-k+1)$--hypersurface is not branched along the intersection of $\tau^{n-2}$ with $\nu$ if and only if $\nu$ is identified to itself upon a single circuit around $\tau^{n-2}.$ But since the two vertices of $\nu$ not in $\tau^{n-2}$ are contained in different sets of the partition, and identifications along facets are realised by perspectivities, this is the case if and only if the degree of $\tau^{n-2}$ is even. Whence the branch locus of the hypersurface is empty if and only if the degree of each $(n-2)$--face is even.
\end{proof}

Now that evenness is a necessary and sufficient condition for immersion, we can link the induced representations to embeddings.
The induced representation $\overline{\rho}={\rho}_{k|n-k+1}\co\pi_1(M)\to \Sym(N),$ where $N$ is as in \S\ref{subset:sum reps from partitions}, clearly leaves the $(k, n-k+1)$--hypersurface invariant, and the next result determines its relationship with the hypersurface.

\begin{proposition}[trivial induced representation implies embedding]\label{pro:trivial induced implies embedding}
Suppose $\widehat{M}$ has an even triangulation and $k>1.$ Then ${\rho}_{k|n-k+1}$ leaves each component of the $(k,n-k+1)$--hypersurface $Q$ invariant. Moreover, $\im ({\rho}_{k|n-k+1})$ is trivial if and only if $Q$ has $N$ components. Each of these components is an embedded normal hypersurface in $\widehat{M}.$
\end{proposition}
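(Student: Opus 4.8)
The plan is to identify the components of $Q$ with the orbits of $\im(\overline{\rho})$ on the set $\mathcal{P}$ of $(k,n-k+1)$--normal disc types of the base simplex $\sigma_0$ (so that $|\mathcal{P}|=N$), and then to read off the three assertions from this identification.

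First I record how $Q$ is glued together. Since $k>1$ and $n\ge 3$, every $(k,n-k+1)$--disc type $\nu$ of an $n$--simplex $\widetilde{\sigma}$ meets each facet $\tau$ of $\widetilde{\sigma}$ in a single normal disc type $\nu\cap\tau$ (of type $(k-1,n-k+1)$ or $(k,n-k)$), and the facet gluings of $Q$ are realized by the perspectivities: $\overline{\nu}$ in $\widetilde{\sigma}$ is glued along $\tau$ to $\overline{p_\tau(\nu)}$ in the adjacent simplex, where $p_\tau$ is the perspectivity across $\tau$. Indeed $p_\tau(\nu)\cap\varphi_\tau(\tau)=\varphi_\tau(\nu\cap\tau)$ straight from the definition of $p_\tau$, and $p_{\varphi_\tau(\tau)}\circ p_\tau=\mathrm{id}$ makes this pairing symmetric. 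The triangulation being even, $Q$ is an immersion.

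Now view this as a bundle over the dual graph $\Gamma$ of the triangulation with fibre $\mathcal{P}$: over the $n$--simplex $\widetilde{\sigma}$ sits its set of $N$ disc types, and the edge dual to $\tau$ transports $\nu$ to $p_\tau(\nu)$. The total space of this bundle is precisely the graph recording which normal discs of $Q$ are glued to which, so the components of $Q$ correspond to the components of the bundle, i.e.\ to the orbits of the monodromy group on the fibre over $\sigma_0$. Since the triangulation is even, the perspectivity around every $2$--cell of the dual complex acts trivially (Lemma~\ref{lem:canonical rep}), so the monodromy homomorphism factors through $\pi_1(M,x_0)$, where it is precisely the induced representation $\overline{\rho}={\rho}_{k\mid n-k+1}$; its image is thus $\im(\overline{\rho})$. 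Hence the components of $Q$ are in bijection with the $\im(\overline{\rho})$--orbits in $\mathcal{P}$, and, transport along $\Gamma$ being a bijection of fibres, the component corresponding to an orbit $O$ meets every $n$--simplex of $\widehat{M}$ in exactly $|O|$ normal discs, those in $\sigma_0$ being indexed by $O$.

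The three claims now follow. Each $\overline{\rho}([\gamma])$ permutes $\mathcal{P}$ within orbits and hence, in $\sigma_0$ and over every $n$--simplex, preserves the set of disc types belonging to each component; that is, $\overline{\rho}$ leaves each component of $Q$ invariant. Next, $\im(\overline{\rho})$ is trivial iff all orbits are singletons iff there are $N=|\mathcal{P}|$ of them iff $Q$ has $N$ components. Finally, when $\im(\overline{\rho})$ is trivial every component $C$ meets each $n$--simplex of $\widehat{M}$ in exactly one normal disc, so in the normal coordinate of $C$ at most one variable is nonzero per $n$--simplex; the compatibility hypothesis of the embeddedness criterion for normal hypersurfaces then holds vacuously, and $C$ is embedded. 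The main obstacle, I expect, is the bookkeeping in the middle two paragraphs---verifying that the facet gluings of $Q$ are governed verbatim by the perspectivities, and that the bundle's monodromy group really coincides with $\im(\overline{\rho})$ once evenness is used; everything after that is formal.
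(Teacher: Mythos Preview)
Your argument is correct and is essentially the same as the paper's: the paper also observes that the facet gluings of $Q$ are governed by perspectivities, so that two $(k,n-k+1)$--disc types of $\sigma_0$ lie in the same component of $Q$ if and only if some projectivity carries one to the other, and then identifies this action with $\overline{\rho}$. Your bundle/monodromy packaging is a tidy way to say the same thing; the only cosmetic difference is that the paper first notes directly that a component meeting one $n$--singlex in $i$ discs meets every $n$--singlex in $i$ discs (your $|O|$ statement), and deduces embeddedness from $i=1$ by inspection rather than by invoking the compatibility criterion.
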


\begin{proof}
If $Q_0$ is a component of $Q$ and meets some $n$--singlex, $\sigma,$ in $i$ normal discs, $i \in \{ 0,1,\ldots, N\},$ then the same is true for each $n$--singlex meeting $\sigma$ in a face since each $(k,n-k+1)$--normal disc meets each facet of each $n$--singlex. Since $M$ is connected, the same is true for every $n$--singlex in the triangulation (and in particular $i \neq 0$ if $Q_0\neq \emptyset$). It follows immediately that $Q$ has $N$ components if and only if each component $Q_0$ of $Q$ meets each $n$--singlex in one normal disc. This clearly also implies if $Q$ has $N$ components that each such component is embedded. 

It was observed that perspectivities encode the information about identifications of normal discs along their boundary faces. It follows that two $(k,n-k+1)$--normal disc types in $\sigma_0$ are in the same component of $Q$ if and only if there is a projectivity taking one to the other. The action of projectivities on the $(k,n-k+1)$--normal disc types in $\sigma_0$ is precisely the action of ${\rho}_{k|n-k+1}.$
\end{proof}

\begin{corollary}\label{cor:trivial induced rep gives non-cyclic}
Suppose $M$ is a closed $n$--manifold. If $M$ has an even triangulation with one vertex and $\im ({\rho}_{k|n-k+1})$ is trivial for some $k>1,$ then $H_1(M; \Z_2)$ has rank at least $k.$ In particular, $\pi_1(M)$ is non-cyclic.
\end{corollary}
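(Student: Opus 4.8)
The plan is to combine Proposition~\ref{pro:trivial induced implies embedding} with a count of the embedded hypersurface components and a Poincar\'e--Lefschetz duality / mod-$2$ intersection argument. First I would invoke Proposition~\ref{pro:trivial induced implies embedding}: since $\widehat M = M$ has an even triangulation with one vertex and $\im({\rho}_{k|n-k+1})$ is trivial, the $(k,n-k+1)$--hypersurface $Q$ splits into exactly $N = \binom{n+1}{k}$ (or $\tfrac12\binom{n+1}{k}$ when $n+1 = 2k$) pairwise disjoint embedded normal hypersurfaces $Q_1,\dots,Q_N$, each meeting every $n$--simplex in exactly one normal $(k,n-k+1)$--disc. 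In particular $N \ge \binom{n+1}{k} / 2 \ge k$ for $k>1$ (this elementary inequality should be checked, but it holds for all $k$ with $1<k\le\frac{n+1}{2}$), so it suffices to produce $k$ linearly independent classes in $H_1(M;\Z_2)$, or dually $k$ independent classes in $H_{n-1}(M;\Z_2)$.

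Next I would argue that the fundamental classes $[Q_1],\dots,[Q_N] \in H_{n-1}(M;\Z_2)$ span a subspace of dimension at least $k$. The key geometric input is the mod-$2$ intersection pairing between these hypersurfaces and the edges (1--singlices) of the triangulation. Since $M$ has a single vertex, every edge is a loop, giving a class in $H_1(M;\Z_2)$; and each embedded component $Q_j$ meets a given edge $e$ in a number of points congruent mod $2$ to the number of endpoints of $e$ lying on the "size-$k$ side" of the normal disc type labelling $Q_j$ inside each incident simplex --- but because the triangulation has one vertex and $Q_j$ meets each simplex in exactly one disc of a fixed combinatorial type determined by a $k$--element subset $S_j \subseteq \{1,\dots,n+1\}$, the mod-$2$ intersection number $[Q_j]\cdot[e]$ is computed from how the two ends of $e$ sit relative to $S_j$. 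The point is to choose $k$ edges (or $k$ suitable $1$--cycles in the dual picture) whose intersection matrix with a suitable subcollection of the $Q_j$ has rank $k$ over $\Z_2$; concretely one can use the characteristic-function description, realising the $Q_j$ as (the hypersurfaces dual to) the $\Z_2$--cochains "is vertex-label in $S_j$", and note that the $\binom{n+1}{k}$ sets $S_j$ of size $k$ span a $k$-dimensional (for $k \le \frac{n+1}{2}$, in fact larger) subspace of the space of $\Z_2$--valued functions on the vertex-labels modulo constants. This exhibits $k$ independent classes, hence $\operatorname{rank} H_1(M;\Z_2) \ge k$ by the universal-coefficient/duality isomorphism $H_{n-1}(M;\Z_2)\cong H^1(M;\Z_2)\cong \operatorname{Hom}(H_1(M;\Z_2),\Z_2)$.

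Finally, the statement $\pi_1(M)$ non-cyclic follows formally: $k>1$, so $\operatorname{rank}H_1(M;\Z_2)\ge 2$, whence $H_1(M;\Z)$ --- the abelianisation of $\pi_1(M)$ --- surjects onto $(\Z_2)^2$, which is impossible for a cyclic group. I expect the main obstacle to be the middle step: making precise and correct the assertion that the embedded components $Q_1,\dots,Q_N$ carry at least $k$ independent $\Z_2$--homology classes. One has to (a) pin down that with a single vertex each $Q_j$ is genuinely a $\Z_2$--cycle "dual to" a well-defined labelling cochain --- here evenness and one-vertexness together are what make the combinatorial labelling globally consistent modulo $2$ --- and (b) verify the linear-algebra claim that the size-$k$ subsets of an $(n+1)$--set span a subspace of dimension $\ge k$ in $\Z_2^{\,n+1}/\langle(1,\dots,1)\rangle$; this is a short but genuine argument (e.g. by exhibiting, for $i=1,\dots,k$, sets $S_i$ with $S_i \triangle S_{i+1}$ a transposition, or by a rank computation of the inclusion matrix of $k$--sets versus singletons). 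Once these are in hand, the duality and group-theory steps are routine.
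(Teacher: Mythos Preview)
Your proposal is correct and follows essentially the same route as the paper: invoke Proposition~\ref{pro:trivial induced implies embedding} to get the $N$ embedded components $Q_j$, pair them with the edge-loops (which exist because there is a single vertex), and then do a rank computation on the resulting $\Z_2$--intersection matrix. The only cosmetic difference is in how the linear algebra step is set up: the paper fixes one corner $v$ of a base simplex and records the intersection of each $Q_j$ with the $n$ edges emanating from $v$, obtaining vectors in $\Z_2^{\,n}$ with $k-1$ zeros and $n-k+1$ ones, and then exhibits $k$ independent such vectors directly; you instead phrase it as characteristic vectors of $k$--subsets inside $\Z_2^{\,n+1}/\langle (1,\dots,1)\rangle$. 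The paper's framing is marginally cleaner because fixing $v$ absorbs the ``mod constants'' quotient automatically and sidesteps your step~(a), but the two computations are equivalent.
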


\begin{proof}
The edges in $M$ are loops and generate $\pi_1(M),$ where the vertex of the triangulation is chosen as a base-point. Each component $Q_j$ of the embedded $(k,n-k+1)$--hypersurface defines an element of $H_{2k-2}(M; \Z_2),$ and meets each edge loop in either one or no point. We therefore obtain the element $\alpha_j$ of $H^1(M; \Z_2),$ which is Poincar\'e dual to $Q_j,$ by taking the intersection numbers of $Q_j$ with the edges. Considering the edge loops incident with a single vertex $v$ of a single $n$--simplex $\sigma$ in $\widetilde{\Delta}$ gives the result as follows. There are $n$ edges of $\sigma$ that are incident with $v.$ Choosing a $(k-1)$--simplex in $\sigma$ incident with $v$ amounts to choosing $k-1$ edges incident with $v.$ The dual cohomology class sends these $k-1$ edges to $0$ and the remaining $n-k+1$ edges to $1.$ We may identify the restriction of this homology class with an element of $\Z_2^{n},$ with the coordinate axes represented by the edges incident with $v.$ A lower bound on the dimension of $H^1(M; \Z_2)$ is then given by a lower bound on the rank of the matrix whose rows are all vectors having $k-1$ entries equal to zero and $n-k+1$ entries equal to one. Since $n-k+1\ge k-1,$ it suffices to restrict to the submatrix whose rows are all vectors having $k-1$ entries equal to zero and $k-1$ entries equal to one. This has rank at least $k,$ as can be seen by considering all vectors of the form $(0, \ldots, 0, 1 , \ldots , 1, 0, \ldots, 0),$ where the first or the last sequence of zeros may be empty.
\end{proof}

\begin{remark}
More generally, a function in the number of embedded components of the $(k, n-k+1)$--hypersurface gives a lower bound on the rank of $H_1(M; \Z_2);$ this number is in turn the number of $(k, n-k+1)$--disc types that is fixed by ${\rho}_{k|n-k+1}.$ In particular, if $M$ has finite fundamental group of odd order, then no component of the hypersurface is embedded.
\end{remark}

\begin{corollary}
Suppose $M$ is a closed $n$--manifold with an even triangulation with one vertex. If the fundamental group of $M$ is finite and of order a prime power, then the prime is less than or equal to $n+1.$
\end{corollary}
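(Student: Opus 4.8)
The plan is to argue by contradiction. Suppose the finite group $\pi_1(M)$ has order $p^a$ with $p$ prime and $p > n+1$. Since the triangulation is even, Lemma~\ref{lem:canonical rep} provides the canonical symmetric representation $\rho\co\pi_1(M)\to\Sym(n+1)$, and $\im(\rho)$, being isomorphic to a quotient of $\pi_1(M)$, has order dividing $p^a$; thus $\im(\rho)$ is a (possibly trivial) $p$--group. Now $\Sym(n+1)$ contains no element of order $p$: if $g\in\Sym(n+1)$ had order $p$, then the least common multiple of the lengths of the disjoint cycles of $g$ would equal $p$, and as $p$ is prime at least one of those cycles would have length exactly $p$, forcing $p\le n+1$ against our assumption. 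Since a non-trivial finite $p$--group always contains an element of order $p$, it follows that $\im(\rho)$ is trivial.

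The remaining step is to see that a one-vertex even triangulation cannot have trivial canonical representation. If $p_\gamma=\mathrm{id}$ for every facet loop $\gamma$, then by Proposition~\ref{pro:few vert give non-trivial} no two distinct corners of the base $n$--simplex $\sigma_0$ can have the same image in $\widehat M$; hence $\widehat M$ has at least $n+1$ vertices. As $n\ge3$, this contradicts the hypothesis of a single vertex, and the proof is complete.

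I do not anticipate a real obstacle: the whole content is the elementary fact that $\Sym(n+1)$ has elements of prime order $p$ only when $p\le n+1$, together with the two places the hypotheses enter---``prime power order'' makes $\im(\rho)$ a $p$--group, and ``one vertex'' forbids $\rho$ from being trivial. Both are essential: for instance $L(p,1)$ has cyclic fundamental group of arbitrarily large prime order $p$ and, as noted in the introduction, an even triangulation with at least four vertices, whose canonical representation into $\Sym(4)$ is necessarily trivial---so the conclusion fails once the one-vertex assumption is dropped. (For odd $p$ one could alternatively deduce non-triviality of $\rho$ from Corollary~\ref{cor:trivial induced rep gives non-cyclic}, noting that $H_1(M;\Z_2)=0$ whenever $\pi_1(M)$ has odd order; the case $p=2$ is trivial since $n+1\ge4$.)
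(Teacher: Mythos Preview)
Your proof is correct, and it takes a genuinely different route from the paper's own argument.

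The paper argues via the induced representation $\rho_{k\mid n-k+1}$ and normal hypersurface theory: the image of $\pi_1(M)$ acts as a $p$--group on the set of $(k,n-k+1)$--normal disc types in $\sigma_0$, which has cardinality $N$ dividing $(n+1)!$ and hence coprime to $p$ when $p>n+1$. The fixed-point congruence for $p$--groups then produces a fixed disc type, i.e.\ an embedded component of the $(k,n-k+1)$--hypersurface; by the argument of Corollary~\ref{cor:trivial induced rep gives non-cyclic} (and the Remark following it) this forces $H_1(M;\Z_2)\neq 0$, whence $p=2<n+1$.

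Your argument bypasses the hypersurface machinery entirely: you observe that the \emph{canonical} representation $\rho$ already has $p$--group image, that $\Sym(n+1)$ contains no element of prime order $p>n+1$ (so $\im\rho$ is trivial), and then invoke Proposition~\ref{pro:few vert give non-trivial} directly to reach a contradiction with the one-vertex hypothesis. This is more elementary---it uses only \S\ref{subsec:Symmetric representations} and nothing from \S\ref{sec:normal hyper}---and arguably cleaner. The paper's route, on the other hand, is in keeping with the theme of the surrounding section and illustrates how the embedded-component/$\Z_2$--cohomology mechanism behind Corollary~\ref{cor:trivial induced rep gives non-cyclic} extends beyond the case of \emph{trivial} induced image to the presence of any fixed disc type.
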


\begin{proof}
This follows from the fixed point congruence for $p$--groups: if the prime is bigger than $n+1,$ then ${\rho}_{k|n-k+1}$ fixes at least one $(k,n-k+1)$--normal disc type since $p$ does not divide the total number of $(k,n-k+1)$--normal disc types. Whence $H_1(M; \Z_2)$ has rank at least $1,$ and so $p=2<n+1.$ 
\end{proof}

We end this section with some examples, which focus on the particularly interesting case, where the partition is (as close as possible to) half-half.

\begin{example}
Suppose $\widehat{M}$ is a 3--dimensional pseudo-manifold. Then $Q$ is obtained by placing three quadrilateral discs in each tetrahedron, one of each type, such that the result is a (possibly branched immersed) normal surface. The above implies that $Q$ consists of three embedded surfaces if and only if every edge has even degree and the induced representation $\rho_{2|2}$ has trivial image. In our applications to 3--manifolds in \S\ref{sec:Properties of 3-manifolds with even triangulations}, we will pass to a finite covering space corresponding to the kernel of $\rho_{2|2}.$ In this covering space, the hypersurface lifts to $3$ connected components, and each component is hence an embedded hypersurface in the cover.
\end{example}

\begin{example}
Consider the case of a $4$--manifold. In the covering space, in which $\rho_{2|3}$ has trivial image, the components of $Q$ are $3$--manifolds. These 3--manifolds are unions of $3$--cells, which are products $\sigma^2 \times \sigma^1$, i.e.\thinspace triangular prisms, and divide the $4$--manifold into two regions. One is a $4$--dimensional handlebody, i.e.\thinspace a connected sum of copies of $S^1 \times B^3$. The other has a $2$--dimensional spine. There are ten such triangular prisms corresponding to the $2|3$ partitions of $5$ vertices.
\end{example}

\begin{example}\label{exa:even 5}
Consider the case of $5$--dimensional pseudo-manifolds. The hypersurface $Q$ consists of $4$--cells which are products $\sigma_2 \times \sigma_2$. The boundary of such a $4$--cell is a copy of $S^3$ tiled with six copies of a triangular prism $\sigma_2 \times I$. These prisms form two solid tori giving the standard Heegaard splitting of $S^3$. The $4$--cells are glued together along the triangular prisms to form the hypersurface $Q$. In the covering space, where the symmetric representation has trivial image, each component $Q_0$ of $Q$ is an embedded $4$--manifold with an even cell decomposition. So there are an even number of $4$--cells around each square or triangular face. 

Moreover, $Q_0$ bounds one or two regions $R, R^\prime$ with $2$--dimensional spines, depending on whether $Q$ is one-sided or two-sided.  So this is the $5$--dimensional equivalent of a one- or two-sided Heegaard splitting. Moreover the inclusion maps $\pi_1(Q_0) \to \pi_1(R)$ and $\pi_1(Q_0) \to \pi_1(R^\prime)$ are injections in both the one- and two-sided cases. (Here we are deleting any simplices for which the link is not a sphere to obtain an open manifold.) The reason is that if a loop in $Q_0$ bounds a disc in $R$ or $R^\prime$, then the disc can be pushed off the spine by transversality and hence into $Q_0$. So if $Q_0$ bounds two regions and neither $\pi_1(Q_0) \to \pi_1(R)$ nor $\pi_1(Q_0) \to \pi_1(R^\prime)$ are onto, then $\pi_1(M)$ is an amalgamated free product. \end{example}

\begin{example}
The observations in Example~\ref{exa:even 5} extend in an interesting way to all odd dimensional pseudo-manifolds. In particular if $M$ is a ${(2k-1)}$--dimensional pseudo-manifold, then passing to the covering space corresponding to the kernel of $\rho_{k|k}$, each component $Q_0$ of the hypersurface $Q$ is embedded and one- or two-sided, bounding one or two regions $R, R^\prime$ with $k$--dimensional spines. Consequently the inclusion map induces injections $\pi_j(Q_0) \to \pi_j(R)$ and $\pi_j(Q_0) \to \pi_j(R^\prime)$ for $1 \le j \le k-2$.
\end{example}

\section{Closed 3--manifolds with even triangulations}
\label{sec:Properties of 3-manifolds with even triangulations}

Suppose $M$ is a closed 3--manifold. We already know that if there are at most three vertices in an even triangulation of $M$, then the canonical symmetric representation into $\Sym(4)$ has non-trivial image. Considering the quotients of $\Sym(4)$ and its non-trivial subgroups, this implies $H_1(M,\Z_2) \ne 0$ or $H_1(M,\Z_3) \ne 0.$ It turns out that the interplay between the canonical and the induced symmetric representations gives more information when there are just one or two vertices. This is done in \S\S\ref{subsect:prop one vertex}--\ref{subsect:prop two vertex}.

After determining these properties of closed 3--manifolds having even triangulations, we turn to the question of existence of such triangulations on given 3--manifolds. We build explicit even triangulations of $3$-manifolds with small numbers of vertices, which arise naturally as duals to 1-- and 2--sided Heegaard splittings and also in covering spaces.  We then show that if a $3$--manifold has an even triangulation, then it also has one obtained from our constructions. We wish to emphasize that not all even triangulations arise from our constructions---this would be too much to expect---but that they represent the most common cases.

A brief discussion of similar applications to ideal triangulations can be found in Remark~\ref{rem: ideal prop+constr}.


\subsection{Even triangulations with two vertices}
\label{subsect:prop two vertex}

An important construction of closed orientable 3--manifolds is in terms of Heegaard splittings and one-sided Heegaard splittings. Recall that a Heegaard splitting is a representation of the manifold by gluing two homeomorphic handlebodies together along their boundary surfaces, by an orientation-reversing homeomorphism. A one-sided Heegaard splitting (see \cite{rub}) is obtained by gluing a handlebody to itself by an orientation-reversing involution without fixed points on the boundary of the handlebody.

\begin{theorem}\label{thm:closed 2}
Suppose that $M$ is a closed, orientable $3$-manifold, which admits an even triangulation $\tri$ with exactly two vertices. Then there are three possibilities.
\begin{enumerate}
\item $H_1(M,\Z_2) \ne 0$.
\item $H_1(M,\Z_2) = 0$  and there is an epimorphism of $\pi_1(M)$ onto $\Alt(4)$. 
\item Three faces in the triangulation form a spine for the lens space $L(3,1);$ see Figure~\ref{fig:L(3,1) spine}.\newline In particular, $M$ has $L(3,1)$ as a summand in its prime decomposition.
\end{enumerate}
\end{theorem}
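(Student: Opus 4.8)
The plan is to analyse the canonical symmetric representation $\rho\co \pi_1(M)\to \Sym(4)$ together with the induced representation $\rho_{2|2}\co \pi_1(M)\to \Sym(3)$, using that the triangulation is even so that both are genuine homomorphisms (Lemma~\ref{lem:canonical rep}) and that $\rho_{2|2}=p\circ\rho$ for $p\co\Sym(4)\to\Sym(3)$ with kernel the Klein four-group $K$ (Proposition~\ref{pro:sym factors for n=3}). First I would assume $H_1(M,\Z_2)=0$, which forces $\im\rho$ to be a subgroup of $\Sym(4)$ with no index-two subgroup, hence $\im\rho\subseteq\Alt(4)$, and in fact $\im\rho\in\{1,\,C_3,\,\Alt(4)\}$ since $C_2$ and $C_2\times C_2$ are ruled out. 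By Proposition~\ref{pro:few vert give non-trivial}, with two vertices and four corners the two corners of the base $3$--simplex lying over each vertex of $\widehat M$ must be swapped by some projectivity, so $\im\rho\ne 1$. This leaves the two cases $\im\rho=\Alt(4)$, which is conclusion~(2), and $\im\rho=C_3$, which I must now show forces conclusion~(3).

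Next I would examine the case $\im\rho\cong C_3$. Then $\rho_{2|2}=p\circ\rho$ also has image $C_3$ inside $\Sym(3)$, so by Proposition~\ref{pro:trivial induced implies embedding} the quadrilateral hypersurface $Q$ is \emph{not} split into three embedded components; rather, since $C_3$ acts transitively on the three quadrilateral types in the base tetrahedron, $Q$ is connected and each tetrahedron meets $Q$ in all three of its quadrilateral discs. The key is to reinterpret this $C_3$-structure on the level of corners: the $C_3$-action permutes the four corners of $\sigma_0$ cyclically in one orbit of size $3$ and fixes one corner. Because $\widehat M$ has exactly two vertices and the corner-identifications are realised by projectivities (Proposition~\ref{pro:few vert give non-trivial}), one vertex of $\widehat M$ is the image of a single corner-orbit of size $3$ and the other is the image of the fixed corner. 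I would then count: each tetrahedron contributes three corners to the size-$3$ orbit vertex and one to the other, and analyse how the faces are glued. The $C_3$ label-propagation picture shows the faces incident to the ``$3$-valent'' vertex are cyclically permuted, which is precisely the local picture of the standard two-tetrahedron (or, after collapsing, three-triangle) spine of $L(3,1)$; I would pin this down by exhibiting three of the $2$--singlices whose union, with the induced face identifications, is the standard spine of $L(3,1)$ depicted in Figure~\ref{fig:L(3,1) spine}, and then invoke that a manifold containing an embedded copy of a spine of $L(3,1)$ in a $2$--complex has $L(3,1)$ as a connected summand (cut along the boundary of a regular neighbourhood of the spine, which is a punctured $L(3,1)$, and apply the prime decomposition).

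The main obstacle is the last step: extracting from the bare group-theoretic statement ``$\im\rho\cong C_3$'' the concrete combinatorial conclusion that three specific faces assemble into an $L(3,1)$-spine. The representation only records the holonomy of corner-labels around loops in the dual graph, so I will need to argue that a $C_3$-holonomy with a globally fixed corner-label forces a very rigid local gluing pattern around the non-fixed vertex --- essentially that three faces close up around that vertex with a $2\pi/3$ ``rotational'' identification --- and then recognise this pattern as the $L(3,1)$ spine. Concretely I expect to track the fixed corner (call it $4$): every face pairing preserves the label $4$, so the facets are partitioned into those containing a $4$-labelled corner and those opposite to it, and following the propagation of the remaining labels $\{1,2,3\}$ cyclically around the non-fixed vertex produces exactly the identification pattern of Figure~\ref{fig:L(3,1) spine}. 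Making this identification airtight, rather than merely plausible, is where the real work lies; the rest is an organised case check on small subgroups of $\Sym(4)$.
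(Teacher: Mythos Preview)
Your reduction is clean and in places simpler than the paper's. Assuming $H_1(M,\Z_2)=0$ forces $\im\rho\in\{1,C_3,\Alt(4)\}$, and you are right that Proposition~\ref{pro:few vert give non-trivial} kills $\im\rho=1$ immediately; the paper instead argues this case via one-sided versus two-sided Heegaard surfaces, which is unnecessary. Likewise, in the $C_3$ case your group-theoretic deduction of the $3{+}1$ corner partition is correct and slicker than the paper's route: the paper first rules out $Q$ one-sided (showing this would force $\im\rho=\Alt(4)$), then uses an oriented intersection-number argument with $Q$ to eliminate $2{+}2$ and $4{+}0$, whereas you get $3{+}1$ directly from the fact that every element of $\langle(123)\rangle$ fixes the corner $4$, so by Proposition~\ref{pro:few vert give non-trivial} no other corner of $\sigma_0$ can lie over the same vertex as $4$.

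The genuine gap is the last step. Knowing ``every tetrahedron is $3{+}1$ and $\im\rho=C_3$'' does \emph{not} by itself hand you three specific $2$--singlices forming an $L(3,1)$ spine; the label-propagation of $\{1,2,3\}$ around $v'$ gives you a consistent $3$--colouring of corners but no a priori cyclic identification of faces. The paper's argument here is substantially heavier than you anticipate: one passes to the $3$--fold cover $\widetilde M$ corresponding to $\rho$, where $\widetilde Q$ splits into three embedded components and the edges acquire labels $0,1,2$. One then needs the orientability of $Q$ (equivalently, two-sidedness of the components of $\widetilde Q$), which is where the paper's ``$Q$ one-sided $\Rightarrow\Alt(4)$'' step is actually used. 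A case analysis on edge-labels in a chosen tetrahedron of $\widetilde M$ either shows the graph $\Gamma_0$ of $0$--labelled edges is connected (contradicting two-sidedness), or forces every tetrahedron of $\widetilde M$ to have vertices $g^k\tilde v,\tilde v',g\tilde v',g^2\tilde v'$; in this rigid situation connectivity of $\widetilde M$ produces an edge $\tilde f$ lying in $Y\cap gY\cap g^2Y$ (where $Y=\text{star}(\tilde v)$), so that $\tilde f\cup g\tilde f\cup g^2\tilde f$ is an embedded circle and the cone on it from $\tilde v$ is a disc whose image in $M$ is the $L(3,1)$ spine. Your plan does not contain the two-sidedness input nor a mechanism for locating the specific edge $\tilde f$, and without these the ``three faces close up with a $2\pi/3$ twist'' picture is wishful rather than forced.
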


\begin{proof}
We first use the symmetric representations $\prho=\rho_{2|2} \co \pi_1(M) \to \Sym(3)$ and $\rho \co \pi_1(M) \to \Sym(4)$ described above to determine under which conditions $H_1(M,\Z_2) \ne 0$. 

Up to isomorphism, there are four possible images of $\prho$ namely $\{1\}$, $C_2,$ $C_3$ and $\Sym(3).$ Similarly, there are nine possible images of $\rho,$ namely the same as for $\prho$ and $K,$ $C_4,$ $D_4,$ $\Alt(4)$ and $\Sym(4)$, where $K$ is the Klein $4$-group, and $D_4$ the dihedral group of order $8$. There is a homomorphism of $\pi_1(M)$ onto $C_2$ in all cases except when the image of $\rho$ is $ \{1\},$ $C_3,$ or  $\Alt(4)$. So we conclude that either $H_1(M,\Z_2) \ne 0$ or there is a mapping of $\pi_1(M)$ onto $\Alt(4)$ except if the image of $\rho$ is $ \{1\}$ or $C_3$. In these two remaining cases the image of $\prho$ is isomorphic with the image of $\rho$ since we exclude conclusions (1) and (2).

If $\rho$ (and hence $\prho$) has trivial image, each component of the $(2,2)$--hypersurface is an embedded surface, meeting each tetrahedron in exactly one quadrilateral disc. Each of these quadrilateral surfaces is a Heegaard splitting which could be one-sided or two-sided. If any one of these is one-sided, then $H_1(M,\Z_2) \ne 0$ (by taking a loop intersecting a one-sided surface in a single point). On the other hand, we claim that not all three surfaces can be two-sided. By our assumption $\tri$ has two vertices $v,$ and $v^\prime$. Notice that the collection of edges disjoint from one of the quadrilateral surfaces must form two wedges of circles based at $v$ and $v^\prime$ respectively, if this quadrilateral surface is two-sided. But if this is true for all three surfaces, we find that the one-skeleton of $\tri$ is disconnected, since there are no edges running from $v$ to $v^\prime$. So we also have $H_1(M,\Z_2) \ne 0$ when the image of $\rho$ is $ \{1\}.$

It remains to consider the case, where $H_1(M,\Z_2) = 0$ and the image of $\rho$ (and hence of $\prho$) is $C_3,$ so the quadrilateral surface $Q$ is connected. 

Assume first that $Q$ is one-sided. We claim that in this case, $\rho$ maps $\pi_1(M)$ onto $\Alt(4)$, contrary to assumption. By assumption, $\rho$ maps $\pi_1(M)$ onto $C_3$. In the associated covering space $\widetilde{M}$ of $M$, the quadrilateral surface $\widetilde{Q}$ has three embedded components, which are all one-sided and permuted by the covering transformation. Now choose a base tetrahedron $\sigma_0$ in $M$ and a lift $\tilde{\sigma}_0$ in $\widetilde{M}.$ Label the corners of $\sigma_0$ and lift the labelling to $\tilde{\sigma}_0.$ For each component of $\widetilde{Q},$ one can choose an orientation reversing loop based at the barycentre of $\tilde{\sigma}_0$ and contained in the dual 1--skeleton of $\widetilde{\tri}.$ Under the symmetric representation of $\pi_1(\widetilde{M}),$ this loop maps to a product of two disjoint transpositions, corresponding to the involution of $\tilde{\sigma}_0$ stabilising the quadrilateral of the chosen component of $\widetilde{Q}$ in $\tilde{\sigma}_0$ and interchanging the quadrilaterals of the two other components. Proposition~\ref{pro:trivial induced implies embedding} implies that the induced symmetric representation of $\pi_1(\widetilde{M})$ is trivial, and hence the image of the symmetric representation of $\pi_1(\widetilde{M})$ is the Klein four group by Proposition~\ref{pro:sym factors for n=3}. But the action of loops in $\widetilde{M}$ on corners of $\tilde{\sigma}_0$ descends to the same action of their images on the corners of $\sigma_0.$ Whence the image of the symmetric representation $\rho$ of $M$ is generated by the Klein four group and a 3--cycle, and hence $\Alt(4).$ This contradicts our hypothesis.

Hence $Q$ must be orientable. Now $\tri$ has two vertices, which we denote by $v, v^\prime$. Suppose there is a tetrahedron $\sigma$ with two vertices identified with $v$ and two with $v^\prime$. Then there are opposite edges of $\sigma$ which are edge loops $e$ based at $v$ and $e^\prime$ based at $v^\prime$. We claim that in this case, either $e$ or $e^\prime$ has oriented intersection number $\pm 2$ with $Q$, depending on how the loops and surface are oriented. This is easy to see, since if we orient the two quadrilaterals in $\sigma$ which both cross $e, e^\prime$ then the signs of the crossings must be the same on one of the two edges and opposite on the other. So $H_1(M,\Z)$ has an element of infinite order generated by $Q,$ and hence $H_1(M,\Z_2) \ne 0;$ a contradiction.

The same argument applies if there is a tetrahedron $\sigma$ with all vertices identified with $v$ or all vertices identified with $v^\prime$. 

In the remaining case, all tetrahedra must have the same labelling with, say, one vertex identified with $v$ and three vertices identified with $v^\prime$. (There cannot also be tetrahedra with three vertices identified with $v$ and one with $v^\prime,$ since then faces of the two types of tetrahedra do not match and the manifold would be disconnected). Hence the triangulation can be viewed as a double-cone on a one-vertex triangulated $2$-complex $X$, which consists of all the triangular faces with vertices identified to $v^\prime$. Moreover, $X$ is a spine for $M$. 

Lift to the $3$-fold covering space $\widetilde{M}$ of $M$ corresponding to the symmetric representation $\rho$ of $\pi_1(M)$ onto $C_3$. In $\widetilde{M}$, the lifted quadrilateral surface $\widetilde{Q}$ has three components, labelled $0,1,2$. Now we can also label the edges of the lifted triangulation $\widetilde{\tri}$ by $0,1,2,$ where an edge is labelled $i$ if it is disjoint from the component $i$ of $\widetilde{Q}$. The action of $C_3$ cyclically permutes the components of $\widetilde{Q},$ and hence the labels of the edges. Denote the covering transformations on $\widetilde{M}$ by $\{1,g,g^2\}$. Without loss of generality, the covering action and edge labellings can be chosen so that $g(0)=1,g(1)=2,g(2)=0$, i.e.\thinspace $g$ acts as the 3--cycle $(012).$

Each component of $\widetilde{Q}$ is a surface homeomorphic to $Q$ and hence is orientable. Since $X$ is a spine for $M$, $\pi_1(X)$ is isomorphic with $\pi_1(M)$ and so maps onto $C_3$. Hence we can find a generator, which is an edge loop $e$ in $X$, which maps to a non-trivial element of $C_3$. Now $e$ lifts to edges joining different vertices in $\widetilde{X}$, the lift of $X$ to $\widetilde{M}$. Choose a lift $\tilde{e}$ of $e.$ Without loss of generality, we may assume that $\tilde{e}$ has label $0.$ The endpoints of $\tilde{e}$ are distinct lifts of $v'.$ We can denote one of them $\tilde{v}^\prime$ such that the other is $g\tilde{v}^\prime.$ Next, choose a face $\tau$ in $M$ containing $e,$ lift it to a face $\tilde{\tau}$ in $\widetilde{M}.$ Then the remaining vertex of $\tilde{\tau}$ is a lift, denoted $\tilde{v},$ of $v.$

Now the triangulation of $\widetilde{M}$ has the six vertices $\tilde{v},$  $g\tilde{v},$ $g^2\tilde{v},$ $\tilde{v}^\prime,$ $g\tilde{v}^\prime,$ and  $g^2\tilde{v}^\prime.$ Consider a tetrahedron $\tilde{\sigma}$ in $\widetilde{M}$ containing $\tilde{\tau}.$ This tetrahedron has vertices at $\tilde{v},$ $\tilde{v}^\prime,$ $g\tilde{v}^\prime$ and last vertex $\tilde{u}\in \{\tilde{v}^\prime, g\tilde{v}^\prime, g^2\tilde{v}^\prime\}.$ We first show that $\tilde{u}=g^2\tilde{v}^\prime.$

If $\tilde{u}\neq g^2\tilde{v}^\prime,$ then it follows from Proposition~\ref{pro:few vert give non-trivial} that the image of the canonical symmetric representation of $\widetilde{M}$ is non-trivial. Since the induced symmetric representation of $\widetilde{M}$ is trivial, this implies that there is a loop $\tilde{\gamma}$ in $\widetilde{M}$ with image a product of disjoint transposition. Whence the image of the canonical symmetric representation of ${M}$ contains this permutation, contradicting the fact that its image is isomorphic with $C_3.$ This shows that $\tilde{u}=g^2\tilde{v}^\prime.$

The pairs of opposite edges in $\tilde{\sigma}$ are labelled $0, 1, 2.$ We already know that $\tilde{e}=[\tilde{v}^\prime, g\tilde{v}^\prime]_{\tilde{\tau}}$ has label $0.$ Our strategy is to show that the graph $\Gamma_0$ consisting of all edges labelled 0 is connected. This implies that the component of $\widetilde{Q}$ dual to these edges is a 1--sided Heegaard splitting surface, giving the desired contradiction. Denote $C$ the connected component of $\Gamma_0$ containing $\tilde{e}.$ It therefore suffices to show that $\Gamma_0=C,$ and for this equality, it suffices to show that all six vertices are in $C.$

We distinguish two cases, depending on the labels of the remaining edges of $\tilde{\tau}.$

\begin{itemize}
\item[(1)] Suppose that the remaining edges of $\tilde{\tau}$ have the following labels: $[\tilde{v}, \tilde{v}^\prime]_{\tilde{\tau}}$ has label 2 and $[\tilde{v}, g\tilde{v}^\prime]_{\tilde{\tau}}$ has label 1. Since $g^2(1)=0,$ applying $g^2$ to $[\tilde{v}, g\tilde{v}^\prime]_{\tilde{\tau}}$ gives an edge labelled $0$ running from $g^2\tilde{v}$ to $\tilde{v}^\prime.$ Similarly, since $g(2)=0,$ applying $g$ to $[\tilde{v}, \tilde{v}^\prime]_{\tilde{\tau}}$ gives an edge labelled $0$ running from $g\tilde{v}$ to $g\tilde{v}^\prime.$ In particular, $C$ of $\Gamma_0$ contains $g\tilde{v}, g^2\tilde{v}, \tilde{v}^\prime$ and $g\tilde{v}^\prime.$

Since $\tilde{u}=g^2\tilde{v}^\prime,$ then due to the edge in $\tilde{\sigma}$ opposite $\tilde{e}$ we know that $g^2\tilde{v}^\prime$ and $\tilde{v}$ are in the same connected component of $\Gamma_0,$ and it remains to show that one of these vertices is in $C.$ Now there is an edge of $\tilde{\sigma}$ with label 2 and endpoints $g\tilde{v}^\prime$ and $g^2\tilde{v}^\prime.$ Applying $g$ shows that $g^2\tilde{v}^\prime$ is in $C,$ and hence $C$ contains all vertices.

\item[(2)] Suppose that the remaining edges of $\tilde{\tau}$ have the following labels: $[\tilde{v}, \tilde{v}^\prime]_{\tilde{\tau}}$ has label 1 and $[\tilde{v}, g\tilde{v}^\prime]_{\tilde{\tau}}$ has label 2. Applying $g$ to $[\tilde{v}, g\tilde{v}^\prime]_{\tilde{\tau}}$ gives an edge labelled $0$ running from $g\tilde{v}$ to $g^2\tilde{v}^\prime.$ Similarly, applying $g^2$ to $[\tilde{v}, \tilde{v}^\prime]_{\tilde{\tau}}$ gives an edge labelled $0$ running from $g^2\tilde{v}$ to $g^2\tilde{v}^\prime.$ In particular, there is a connected component $C^\prime$ of $\Gamma_0$ containing the three vertices 
$g\tilde{v}, g^2\tilde{v}$ and $g^2\tilde{v}^\prime.$ 
We already know that $C$ contains $\tilde{v}^\prime$ and $g\tilde{v}^\prime.$
It therefore suffices to show that $\tilde{v}$ is in $C$ and $C=C'.$ We again consider cases depending on the remaining vertex of $\tilde{\sigma}.$

Since $\tilde{u}=g^2\tilde{v}^\prime$  and case (1) above leads to a contradiction, we know that either $C = \Gamma_0$ or \emph{every} tetrahedron in $\widetilde{M}$  must have four vertices labelled $g^k \tilde{v}, \tilde{v}^\prime, g\tilde{v}^\prime, g^2\tilde{v}^\prime$, for $k=0,1,2.$ The method used so far merely leads to the conclusion that $\Gamma_0$ has at most two components; one with set of vertices $\{\tilde{v}^\prime, g\tilde{v}^\prime\},$ and the other with set of vertices $\{\tilde{v}, g\tilde{v}, g^2\tilde{v}, g^2\tilde{v}^\prime\}.$ Denote $Y$ the union of all tetrahedra in $\widetilde{M}$ that are incident with $\tilde{v}.$ Then $g^kY$ is the union of all tetrahedra in $\widetilde{M}$ that are incident with $g^k\tilde{v}$ for $k=0,1,2.$ Since $\widetilde{M}$ is connected, there is at least one edge, $\tilde{f},$ in the intersection $Y \cap gY\cap g^2Y.$ Whence the orbit of $\tilde{f}$ is also contained in the intersection. Notice that the union $\tilde{f} \cup g\tilde{f} \cup g^2\tilde{f}$ is an embedded circle in $\widetilde{M}$ passing through the vertices $ \tilde{v}^\prime, g\tilde{v}^\prime, g^2\tilde{v}^\prime.$

Now $Y$ can be viewed as a cone on a 2--complex (possibly with some boundary faces identified). The cone structure of $Y$ implies that there is a disc in the 2--skeleton of $\tri$ which is a cone on $\tilde{f} \cup g\tilde{f} \cup g^2\tilde{f}$ with cone point $\tilde{v}.$ This disc is clearly embedded in $\widetilde{M}$ and the covering map maps the interior of the disc injectively to $M.$ Whence the closed disc is mapped to a spine for $L(3,1)$ in $M.$ This is the third conclusion of the theorem.
\end{itemize}
This concludes the proof of the theorem.
\end{proof}

\begin{example}[(Binary tetrahedral space)]\label{exa:tetrahedral}
A nice example of a triangulation satisfying the second possibility in the theorem is the binary tetrahedral space $S^3/T$, where $T$ is the binary tetrahedral group of order $24$. A fundamental domain for the action of $T$ on $S^3$ is a regular spherical octahedron with all dihedral angles $\frac{2\pi}{3}$. Opposite faces are identified using a $\frac{2\pi}{3}$ twist. If we cone the faces of the octahedron to a vertex in its centre, we obtain a triangulation of the octahedron which glues up to a two vertex triangulation of $S^3/T$  where all edges are of even order. There are $8$ tetrahedra and hence $24$ quadrilaterals in the quadrilateral surface. Each surface has two vertices of degree $4$ and two of degree $6$. Hence each contributes $-\frac{1}{6}$ to the Euler characteristic which is therefore $-4$. The symmetric representation to $C_3$ gives a $3$-fold covering space with  fundamental group $Q_8$, the unit quaternions. (See \cite{rub1} for properties of this covering space.) 
The quadrilateral surface lifts to three Heegaard splittings which are either one-or two-sided.  The argument in Theorem~\ref{thm:closed 2} shows that these splittings must be one-sided. It is not difficult to check that the three lifted quadrilateral surfaces each compress to a (different) Klein bottle one-sided splitting for $S^3/Q_8$.

\end{example}

\begin{example}[(Lens spaces and connected sums)]\label{exa:lens}

We will describe infinitely many even triangulations with two vertices of the lens space $L(3,1)$ and connected sums of this lens space with other 3--manifolds.  These correspond to the third possibility of the theorem. We begin with the simple two tetrahedron triangulation $\tri$ of $L(3,1)$ shown in Figure~\ref{fig:L(3,1)}. To build $\tri$ glue together two tetrahedra along three faces of each, giving a cone on a 2--sphere which is the double of a triangle . Denote the interior vertex of the 3--ball by $v$. Now fold the 2--sphere onto the spine of a lens space, i.e.\thinspace the result of identifying all the edges of a triangle together with the same orientation induced from an orientation of the triangle. It is easy to see that the resulting triangulation of $L(3,1)$ has four edges of degrees $2,2,2,6$ and two vertices. (This also comes from the classical description of lens spaces formed by a double cone where the top set of faces is identified with the bottom set by rotations followed by reflections in the equator of the double cone.) 

\begin{figure}[h!]
\centering

  \begin{center}
 \subfigure[Triangulation]{\includegraphics[scale=0.35]{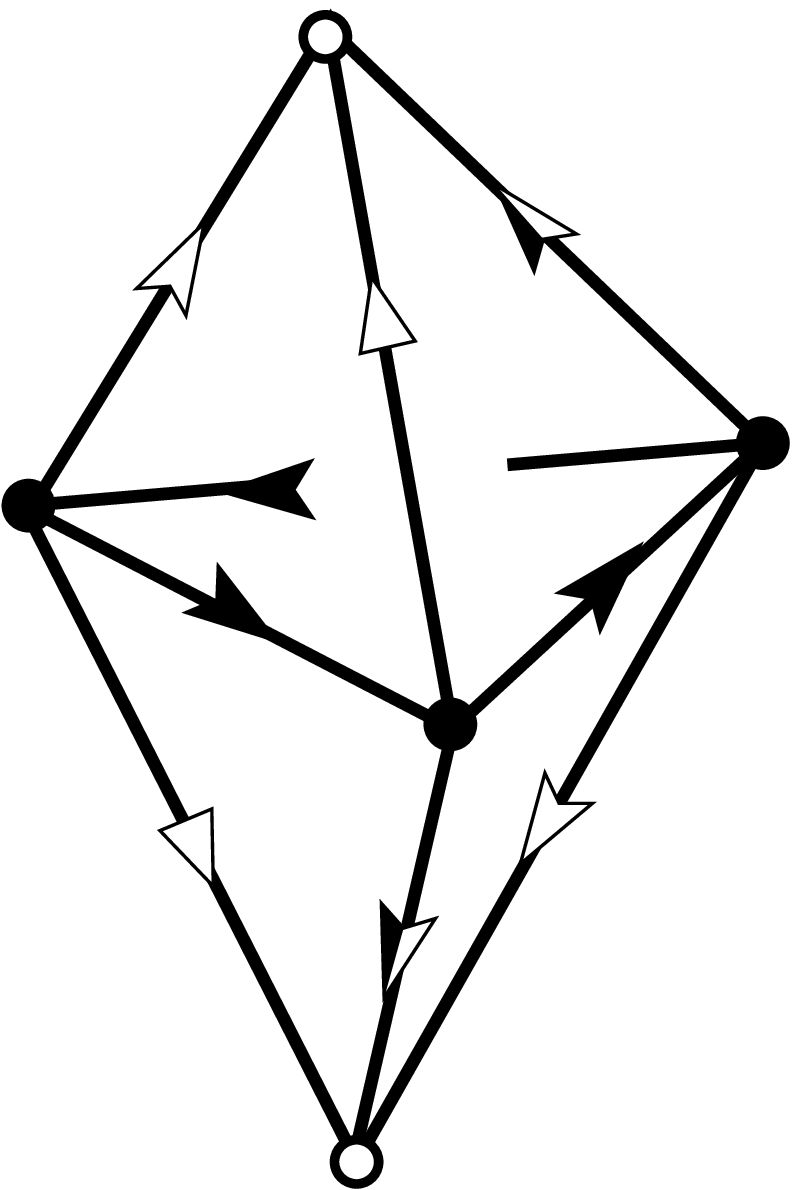}\label{fig:L(3,1)}}
    \qquad\qquad\qquad\qquad
     \subfigure[Spine]{\includegraphics[scale=0.35]{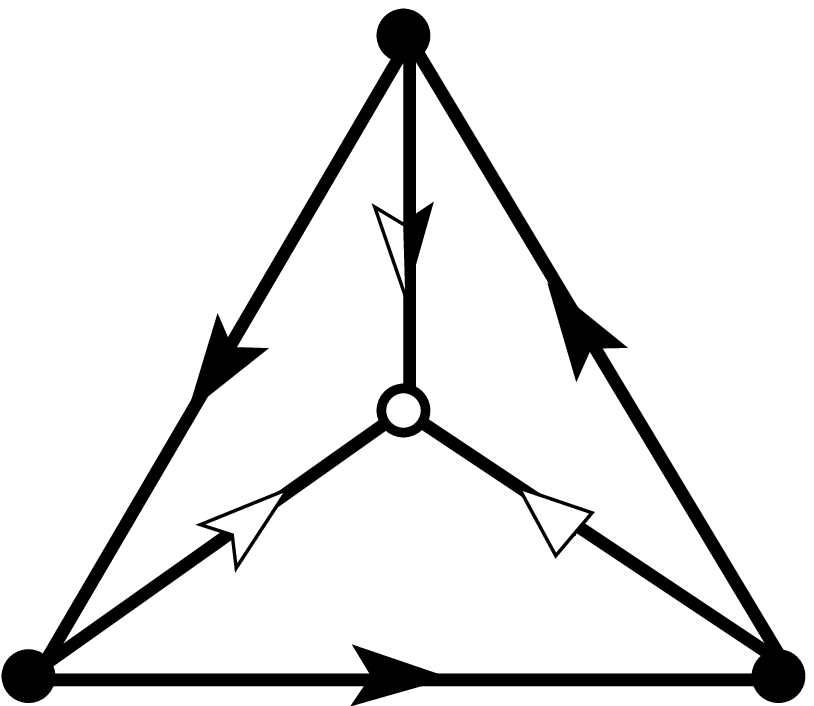}\label{fig:L(3,1) spine}}
\end{center}
\caption{The 2-vertex 2-tetrahedron even triangulation of $L(3,1)$ and the three-face spine}
\end{figure}

We can build on this example in several ways. Let $v^\prime$ denote the second vertex. Firstly, notice that any two distinct edges from $v$ to $v^\prime$ form an embedded loop isotopic to a core circle of the lens space, i.e. to the edge loop based at $v^\prime$. So any cyclic branched cover of degree $n$ over this loop will produce an even triangulation with two vertices of $L(3,1)$ with $2n$ tetrahedra. 

Secondly, suppose we ``split open" the triangulation $\tri$ along a triangular face bounded by two edges from $v$ to $v^\prime$ and the edge loop at $v^\prime$. This gives a triangulation of $L(3,1)$ which has had an open 3--ball removed but has also been pinched so that two vertices of the boundary 2--sphere are glued together. We can now glue on any 3--vertex even triangulation of a closed 3--manifold $M$, which has similarly been split open along a triangular face containing all three vertices. The result is a 2--vertex even triangulation of a connected sum of $L(3,1)$ with $M$.
\end{example}


\subsection{Even triangulations with one vertex}
\label{subsect:prop one vertex}

\begin{theorem}\label{thm:closed 1}

Suppose that $M$ is a closed, orientable $3$-manifold, which admits an even triangulation $\tri$ with exactly one vertex. Then there is an epimorphism of $\pi_1(M)$ onto $C_2\times C_2,$ $C_4,$ $\Alt(4)$ or $\Sym(4).$
\end{theorem}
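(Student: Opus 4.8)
The plan is to run the same analysis as in Theorem~\ref{thm:closed 2}, using the canonical symmetric representation $\rho\co\pi_1(M)\to\Sym(4)$ and the induced representation $\prho=\rho_{2\mid2}\co\pi_1(M)\to\Sym(3)$, but exploiting the stronger rigidity that comes from having a \emph{single} vertex. First I would recall from Proposition~\ref{pro:few vert give non-trivial} that with one vertex the canonical representation $\rho$ has non-trivial image, so its image is one of the nontrivial subgroups of $\Sym(4)$ (up to conjugacy): $C_2$, $C_3$, $\Sym(3)$, $K$, $C_4$, $D_4$, $\Alt(4)$, $\Sym(4)$. Each of these except $C_3$ and $\Alt(4)$ surjects onto $C_2$, so in those cases $H_1(M;\Z_2)\ne0$; and $H_1(M;\Z_2)\ne0$ gives an epimorphism onto $C_2$, which is \emph{not} yet one of the four listed target groups — so unlike Theorem~\ref{thm:closed 2} I cannot stop there. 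Instead the point will be that a $1$-vertex even triangulation forces the image of $\rho$ to be \emph{large}: I claim it must be one of $K$, $C_4$, $\Alt(4)$, $\Sym(4)$, and then project onto that image.

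The core of the argument is to rule out $\im\rho\in\{1,C_2,C_3,\Sym(3),D_4\}$ (and handle the ``$K$ vs.\ $C_2$'' bookkeeping). The cases $\im\rho=1$ and $\im\rho=C_3$ are the key ones and are handled exactly as in Theorem~\ref{thm:closed 2}: if $\im\rho=1$ then by Proposition~\ref{pro:trivial induced implies embedding} the $(2,2)$--hypersurface $Q$ has three embedded components, each a Heegaard surface; each two-sided component would force the edges disjoint from it to be a single wedge of circles at the unique vertex $v$, and if all three were two-sided the $1$--skeleton would have to split into three mutually disjoint wedges — impossible since the $1$--skeleton is connected and nonempty and every edge is disjoint from exactly one of the three surfaces (each tetrahedron contributes one edge to each ``disjoint'' graph). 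Hence some component is one-sided, giving an orientation-reversing loop whose $\rho$-image is a product of two transpositions, contradicting $\im\rho=1$. If $\im\rho=C_3$, pass to the $3$--fold cover $\widetilde M$ corresponding to $\ker\rho$; there $\widetilde Q$ has three embedded components cyclically permuted, and running the identical ``label the edges by $0,1,2$'' analysis as in Theorem~\ref{thm:closed 2} one shows the graph $\Gamma_0$ of edges disjoint from component $0$ is connected, so that component is a one-sided Heegaard surface in $\widetilde M$; the stabilizing involution together with the $3$--cycle from the deck transformation then generates $\Alt(4)\le\im\rho$, contradiction. (Here the single-vertex hypothesis is what makes the $L(3,1)$ exceptional case of Theorem~\ref{thm:closed 2} disappear: a $1$--vertex triangulation cannot be a double cone on a $1$--vertex $2$--complex, since a double cone has two cone-point vertices, so the spine-of-$L(3,1)$ alternative never arises.) The remaining images $C_2$, $\Sym(3)$, $D_4$ must also be excluded, again because $\prho$ is the quotient of $\rho$ by $K$ (Proposition~\ref{pro:sym factors for n=3}) and none of these has the $K$-quotient structure forced by a connected or near-connected quadrilateral surface with only one vertex; alternatively one shows directly that if $\im\rho$ contains a transposition or is $D_4$ then $H_1(M;\Z_2)$ has rank forcing $\im\rho\supseteq K$ — I would chase this through the intersection numbers of $Q$ with the (now single) wedge of edge loops, as in Corollary~\ref{cor:trivial induced rep gives non-cyclic}, which with one vertex says the rank of $H_1(M;\Z_2)$ is at least the rank of the relevant $0/1$ incidence matrix.

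Once $\im\rho$ is pinned down to lie in $\{K, C_4, \Alt(4), \Sym(4)\}$, the theorem follows: composing $\rho$ with the identity onto its image gives an epimorphism of $\pi_1(M)$ onto one of $C_2\times C_2$, $C_4$, $\Alt(4)$, $\Sym(4)$, as required. I expect the main obstacle to be the careful combinatorial bookkeeping in the $\im\rho=C_3$ case — reproducing, in the one-vertex setting, the edge-labelling and connectivity argument of Theorem~\ref{thm:closed 2} while genuinely using that there is only one vertex to kill the $L(3,1)$ escape route — together with a clean uniform treatment of the ``small but nontrivial'' images $C_2$, $\Sym(3)$, $D_4$, showing each of them actually forces $K\le\im\rho$ rather than leaving us stuck with a mere $C_2$ quotient.
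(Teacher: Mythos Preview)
Your strategy has a framing error: you aim to show $\im\rho\in\{K,C_4,\Alt(4),\Sym(4)\}$ and therefore list $D_4$ among the cases to exclude, but you give no argument for this exclusion and none is needed, since $D_4$ already surjects onto $C_2\times C_2$. Once $D_4$ is allowed, your target becomes exactly that $\im\rho$ be a \emph{transitive} subgroup of $\Sym(4)$, and this follows in one line from Proposition~\ref{pro:few vert give non-trivial}: with a single vertex all four corners of $\sigma_0$ are identified in $M$, so for each ordered pair of corners there is a projectivity carrying one to the other, whence $\im\rho$ acts transitively. The transitive subgroups of $\Sym(4)$ are precisely $K,\;C_4,\;D_4,\;\Alt(4),\;\Sym(4)$, and each surjects onto one of the four groups in the statement. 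Neither you nor the paper takes this shortcut.

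Separately, your case-by-case exclusions have real gaps. The appeal to Corollary~\ref{cor:trivial induced rep gives non-cyclic} for $\im\rho\in\{C_2,\Sym(3),D_4\}$ is misplaced, since that corollary requires $\prho$ to be trivial, which fails in all three cases. The paper instead organises by $\im\prho$ and argues in covers: when $\im\prho=C_2$ it rules out $\im\rho=$\,a single transposition by passing to the double cover, where the remaining two quadrilateral components become embedded one-sided Heegaard surfaces and force a double transposition into $\im\rho$; when $\im\prho=\Sym(3)$ it passes to the double cover and invokes Theorem~\ref{thm:closed 2}, the deck involution (which swaps the two cover vertices) ruling out the $L(3,1)$ alternative; when $\im\prho=C_3$ it works directly in the $3$--fold cover with its three vertices rather than replaying the six-vertex edge-labelling of Theorem~\ref{thm:closed 2}. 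Your $\im\prho=\{1\}$ argument is also more tangled than needed: with one vertex, the spine complementary to any embedded quadrilateral component contains the unique vertex and is therefore connected, so each such component is automatically one-sided---no ``three disjoint wedges'' reasoning is required.
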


\begin{proof} 

As for Theorem~\ref{thm:closed 2}, we start with the symmetric representations $\rho$ and $\prho$. The image of $\prho$ is one of $\Sym(3),$ $C_3,$ $C_2,$ $\{1\}$. Notice that if the quadrilateral surface $Q$ has three components, as occurs if the image of $\prho$ is the trivial subgroup, then each of these is a one-sided Heegaard splitting. The reason is that the spine of the complement of each component must be connected, since there is only one vertex. So in this case, we claim that $H_1(M,\Z_2)$ has rank at least $2$. To verify the claim, note the labelling scheme for the quadrilaterals applied to the edges gives edge loops with distinct labels dual to the three components of $Q$. So the classes of the edge loops give three different elements of first homology with $\Z_2$ coefficients. 

If the induced symmetric representation $\prho$ has image $C_2$, then $\rho$ has image $C_2,$ $C_2\times C_2,$ $C_4$ or $D_4.$ The conclusion of the theorem hold unless the image of $\rho$ is $C_2.$ Since $\prho$ also has image $C_2,$ the image of $\rho$ is generated by a 2--cycle and there are two components of $Q$ and one of these is embedded. As in the previous paragraph, this surface must be a one-sided Heegaard splitting $H$ for $M$. The double cover $\widetilde{M}$ associated to $\prho$ is the same as the double cover corresponding to $H.$ Since $H$ lifts to a 2-sided Heegaard surface in $\widetilde{M}$ and meets each tetrahedron in exactly one quadrilateral, all tetrahedra in the cover have two corners at one of the vertices, $\tilde{v}$ of the cover, and two corners at the other vertex, $g\tilde{v},$ where $g$ is a deck transformation, and the pairs are separated by $\widetilde{H}.$
The quadrilateral surface $\widetilde{Q}$ in the cover has three embedded components, and the other
two components are 1--sided Heegaard surfaces since their complementary spine contains both $\tilde{v}$ and $g\tilde{v}$ and hence is connected. These components are permuted by the covering transformation. An orientation reversing loop on either of them is mapped to a product of transpositions under the symmetric representation of the cover, and hence we get the same product of transposition by applying $\rho$ to the image of this loop in $\pi_1(M).$ But this involution together with the 2--cycle Shows that the image of $\prho$ contains a group isomorphic with $C_2\times C_2$ or $D_4;$ a contradiction.

Finally if the induced symmetric representation $\prho$ has image $\Sym(3)$ or $C_3$, then consider the associated covering space $\widetilde{M}$ of $M$. The quadrilateral surface separates into three components in $\widetilde{M}$. If these are one-sided, then $H_1(\widetilde{M},\Z_2)$ has an associated subgroup of rank $2$ and  we get an associated representation $\rho$ of $\pi_1(M)$ onto $\Sym(4)$ or $\Alt(4)$ respectively, as in the proof of Theorem~\ref{thm:closed 2}.  Hence assume that the components are orientable. We proceed as in the argument for Theorem~\ref{thm:closed 2} to analyse cases, noting that the lifted triangulation $\widetilde{\tri}$ has either six or three vertices respectively. 

First suppose $\prho$ has image $C_3,$ so $\widetilde{\tri}$ has three vertices. We denote these vertices by $\hat  v, g\hat  v, g^2 \hat  v$ where $\{1,g,g^2\}$ denotes the covering transformations. Assume that a component of the quadrilateral surface is a two-sided Heegaard splitting $\Sigma$ for $\widetilde{M}$. Then without loss of generality we can assume that $\hat  v, g\hat  v$ are the vertices belonging to one of the spines for $\Sigma$ and $g^2 \hat  v$ is the vertex belonging to the other one. Also suppose that the labelling of the edges in these spines is $0$ and that the covering transformation acts as $(012)$ on the labels. Suppose there is a tetrahedron $\tilde{\sigma}_0$ having a corner at $\tilde v$ and a corner at $g^2\tilde v.$ Then there must be another corner at $g^2\tilde v,$ and hence 
$\tilde v$ and $g^2\tilde v$ are connected by both an edge labelled 1 and an edge labelled 2 in $\tilde{\sigma}_0.$ But applying $g^2$ to the former gives an edge labelled 0 connecting $g^2\tilde v$ and $g\tilde v,$ contradicting the fact that they lie in disjoint spines. Hence, since the manifold is connected, there is a tetrahedron $\tilde{\sigma}_1$ having a corner at $g\tilde v$ and a corner at $g^2\tilde v.$ Again, there is another corner at $g^2\tilde v,$ and we get edges labelled 1 and 2 respectively joining $g\tilde v$ to $g^2\tilde v.$ Now applying $g$ to the latter gives a contradiction.

In the last case, where the image of the induced symmetric representation $\prho$ is $\Sym(3)$, we can apply Theorem~\ref{thm:closed 2}. Namely first pass to the double covering $\widetilde{M}$ corresponding to the representation $\pi_1(M) \to \Sym(3) \to C_2$. Then the triangulation of $M$ lifts to an even triangulation of $\widetilde{M}$ with two vertices and a symmetric representation with image $C_3$. So we can apply Theorem~\ref{thm:closed 2} to conclude that in this case, either $H_1(\widetilde{M}, \Z_2) \ne 0$ or there is a representation of $\pi_1(\widetilde{M})$ onto $\Alt(4)$ or there is a connect summand which is a copy of $L(3,1)$. In the first two cases it is straightforward, as in the above argument for $C_2,$ to deduce from the way that the representation arises that there is an epimorphism of $\pi_1(M)$ onto one of $C_2 \times C_2,$ $C_4,$ $\Alt(4),$ or $\Sym(4).$ 

If there is a connected summand which is a copy of $L(3,1),$ then the argument in case (2c) in the proof of Theorem~\ref{thm:closed 2} shows that all tetrahedra in the 2--vertex triangulation of $\widetilde{M}$  have one corner at the vertex $\tilde{v}$ and three corners at the vertex $\tilde{v}^\prime.$ But the covering transformation interchanges the two vertices, and hence would take such a tetrahedron to one with one corner at the vertex $\tilde{v}^\prime$ and three corners at the vertex $\tilde{v}.$ So this case cannot occur. 
\end{proof}

\begin{example}[(Klein bottle lens spaces and prism manifolds)]
Consider the lens space $L(4,1)$ with the 1--vertex triangulation $\tri$ dual to the one-sided Heegaard splitting given by a Klein bottle $K$. This means a solid torus $Y$ is attached to $K$ to form the lens space (see \cite{rub1}). To describe the triangulation, if we write $\pi_1(K) = \{a,b:b^{-1}ab=a^{-1}\}$ then the meridian disc for the solid torus $Y$ has slope $b^2a$. The union of $K$ and the disc form a spine and the triangulation is dual to this spine. The disc has even intersection number with itself (it has a single self-intersection which is counted twice) and so the triangulation $\tri$ has a single tetrahedron, one vertex and all edges of even order. In fact, this is the triangulation described in Example~\ref{exa:L(4,1)}. In this case, $\pi_1(L(4,1))=C_4$ so this illustrates the second case of Theorem 22. All the Klein bottle lens spaces $L(4k,2k-1)$ and prism manifolds, i.e.\thinspace the manifolds with Klein bottles and dihedral or binary dihedral by cyclic finite fundamental groups have similar 1--vertex even triangulations for the same reasons. For the dihedral case, there is a homomorphism of the fundamental group to $C_4$ and for the binary dihedral case to $C_2 \times C_2$. 
See \cite{rub1} for more information about these manifolds and their one-sided splittings. 
\end{example}

\begin{example}

In \cite{bir} a number of examples of $3$-manifolds with one-sided Heegaard splittings of cross-cap genus $3$ are given. In particular, the class of Seifert fibered spaces 
$$\{\;b;\;(o_,0); \; (2,1),\;(4k,2k-1),\;(m,n)\;\}$$ 
for $1 \le k$ and $0<n<m$ or $(m,n)=(1,0)$ are discussed and explicit one-sided Heegaard diagrams drawn. (Here we are using the notation of \cite{Or}). Some of these examples have finite fundamental group (cyclic or binary octahedral by cyclic), but most have infinite fundamental group. Since the Heegaard diagrams have only two discs, a necessary and sufficient condition for all edges to be even in the dual triangulation is that the two discs have even intersection number. From the representation of the homology classes of these discs in \cite{bir} one can compute that this occurs if and only if $mk$ is even. In this case, $\pi_1(M)$ maps onto $C_2 \times C_2$ or $C_4$ depending on whether $m$ or $k$ is even respectively. 
\end{example}

\begin{problem}
If $M$ has an even 1--vertex triangulation, is $H_1(M,\Z_2)$ necessarily non-trivial? We know this fails for 2--vertex triangulations, from Examples~\ref{exa:tetrahedral} and \ref{exa:lens}, and the minimal 1--vertex triangulations of both the binary tetrahedral space and the lens space $L(3,1)$ have some edges of odd degree. 
\end{problem}


\subsection{Existence of even triangulations with two vertices}
\label{subsec:Existence of even triangulations with two vertices}

A $3$-manifold is \emph{irreducible}, if every embedded 2--sphere bounds a 3--ball. A Heegaard splitting is called \emph{irreducible} if whenever a 2--sphere meets the Heegaard surface in a single loop, then the loop bounds a disc in the Heegaard surface. 

\begin{theorem}\label{thm:construct 2-vertex}
Suppose $M$ is a closed irreducible 3--manifold with a diagram for an irreducible Heegaard splitting, satisfying every disc in the diagram has even total intersection number with the other discs in the diagram. Then $M$ admits an even triangulation dual to the Heegaard diagram, which has exactly two vertices. Moreover if $M \ne \R P^3$, then the dual triangulation can be chosen to have no edges of degree $2$.
\end{theorem}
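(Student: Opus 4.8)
The plan is to construct the triangulation directly from the Heegaard diagram and then verify both the evenness and the vertex/degree count combinatorially. Recall that a genus-$g$ Heegaard diagram consists of a handlebody $W$ with a system of $g$ meridian discs $D_1,\dots,D_g$, together with the images on $\partial W$ of the $g$ meridian discs of the second handlebody, realised as a system of $2g$ curves cutting $\partial W$ into discs; equivalently one has a "two-disc" style description via a fat-graph/spine. First I would take the dual cell structure: the spine of $W$ is a wedge of $g$ circles (one dual to each $D_i$), and attaching the second family of discs along the diagram curves produces a $2$-complex $X$ which is a spine for $M$. The point is that $X$ is already almost a dual $2$-skeleton of a triangulation: each disc in the diagram meeting the spine edges gives a polygon, and subdividing these polygons into triangles (coning each polygon to an interior point) together with coning the whole picture appropriately yields a singular triangulation $\tri$ whose dual $2$-skeleton is (a subdivision of) $X$. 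The two vertices arise as the two natural "sides": the cone point of $W$ and the cone point of the complementary region — this is exactly the double-cone picture used for $L(3,1)$ and binary tetrahedral space in Examples~\ref{exa:lens} and \ref{exa:tetrahedral}.

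Next I would identify the edges of $\tri$ and compute their degrees. Edges of $\tri$ are dual to the $2$-cells of $X$; these $2$-cells are the bigons/polygons cut out along the diagram curves, and a face of $\tri$ dual to such a $2$-cell is incident with it once for each local sheet of the $2$-complex running along it. The degree of an edge of $\tri$ equals the number of $2$-cells of $X$ around the corresponding edge of $X$, which in turn is governed by how many times the diagram discs cross each spine edge and each other. The hypothesis that every disc in the diagram has even total intersection number with the other discs is precisely what forces each such count to be even: an edge of the dual triangulation running along the $i$-th spine loop sees a $2$-cell for each intersection point of the diagram curves with that loop, and an edge running between two spine loops sees a $2$-cell for each intersection point of the corresponding diagram discs. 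Translating "even total intersection number of each disc" into "even degree of each edge" is the combinatorial heart of the argument and needs to be done carefully case by case for the different edge types.

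Then I would address the "no edges of degree $2$" refinement. A degree-$2$ edge in $\tri$ corresponds to a bigon $2$-cell in the dual spine $X$ with only two sheets along it, i.e. a place where two faces of the triangulation are glued along two edges bounding a bigon. Such a bigon can typically be removed by a local move: collapsing the bigon (a $2$-$2$ move / bistellar flip in the dual picture, or pushing the disc across) reduces the degree-$2$ edge without destroying evenness, since the move is supported near an even edge and replaces even local configurations by even ones. I would show this move can always be performed \emph{unless} it would collapse the whole diagram to something trivial — and here the irreducibility hypotheses enter: irreducibility of $M$ rules out the $2$-sphere obstruction, and irreducibility of the Heegaard diagram rules out trivial handle cancellations that would otherwise be forced, leaving $\R P^3$ (with its genus-$1$ diagram having two discs of intersection number $2$) as the unique exception where every dual triangulation is forced to carry a degree-$2$ edge.

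I expect the main obstacle to be the bookkeeping in step two: setting up the dual correspondence between the Heegaard diagram and the triangulation in enough generality that "even total intersection number of each disc with the others" transparently becomes "every edge of $\tri$ has even degree," while simultaneously keeping the vertex count pinned at exactly two. The subtlety is that a naive cone-on-the-diagram construction can introduce extra vertices or extra low-degree edges, so the construction must be chosen (as in the $L(3,1)$ example, by folding and double-coning rather than arbitrary subdivision) so that the only vertices are the two cone points; verifying this, and verifying that the subdivision of polygons into triangles does not create new odd-degree edges, is where the real care is needed. The degree-$2$ elimination in step three is a secondary obstacle, mainly in pinning down exactly why $\R P^3$ is the sole exception.
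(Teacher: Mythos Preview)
Your construction of the spine is not the one that works, and this is the main gap. You take the $1$--dimensional spine of one handlebody (a wedge of $g$ circles) and attach the discs of the \emph{other} system to it. That $2$--complex is a homotopy spine for $M$, but its complement in $M$ is not a pair of $3$--balls, so there is no reason for the dual cell structure to be a triangulation at all, let alone one with two vertices. The correct spine is $\Sigma \cup \di$: the Heegaard surface together with \emph{both} complete disc systems $D_1,\dots,D_k$ and $D'_1,\dots,D'_k$. Cutting each handlebody along its own discs yields a single $3$--ball, so the complement of this spine is exactly two open balls, and the dual cell decomposition is a genuine (singular) triangulation with precisely two vertices. No coning or subdivision is needed; your proposed ``coning each polygon to an interior point'' would introduce extra vertices and defeat the purpose.

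With the right spine in hand, the edge analysis is cleaner than you describe. There are three types of $2$--cells in $\Sigma \cup \di$, hence three types of dual edges: those dual to $D_i$, those dual to $D'_j$, and those dual to the complementary faces on $\Sigma$. The degree of an edge dual to $D_i$ is the number of intersection points on $\partial D_i$, which is even by hypothesis; likewise for $D'_j$. The edges dual to faces on $\Sigma$ are even \emph{automatically}, because the boundary of such a face alternates between arcs of the $\partial D_i$ and arcs of the $\partial D'_j$; this alternation is the combinatorial fact you are missing. For the degree--$2$ refinement, no Pachner-type moves are needed: simply put the two disc systems in minimal position. Irreducibility of the splitting and of $M$ then rules out bigon faces on $\Sigma$, and a disc meeting the other system only twice forces either a trivial handle (contradicting irreducibility of the splitting) or an embedded $\R P^2$ (forcing $M=\R P^3$).
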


\begin{proof}
Start with an irreducible Heegaard splitting $\Sigma$ of a closed irreducible 3--manifold $M$. We will assume for simplicity that $M$ is orientable---the non-orientable case is easier since then the homological condition we require, is satisfied automatically. Let the two handlebodies for this splitting be denoted $Y, Y^\prime$. Assume that $H_1(M,\Z_2) \ne 0$. We want to pick a Heegaard diagram $\di$ for this splitting with the following property:

Let the diagram $\di$ consist of a complete systems of meridian discs for $Y$ (respectively $Y^\prime$), given by $D_1, D_2, \dots D_k$ (respectively $D^\prime_1, D^\prime_2, \dots D^\prime_k$), where $k$ is the genus of $\Sigma$,
so that the total number of intersections of the boundary of any disc in $\di$ with all the boundaries of the other system of discs is even. 

We claim that the dual triangulation $\tri$ to this Heegaard diagram (i.e. to the spine $\Sigma \cup \di$) has the properties that all its edges are of even order and there are precisely two vertices. 

Since the result of cutting open handlebody $Y$ along the meridian discs $D^\prime_1, D^\prime_2, \dots D^\prime_k$  is a ball, we get one vertex $v$ dual to this ball and similarly a second vertex $v^\prime$ dual to the ball similarly obtained by cutting open the handlebody $Y^\prime$ along its meridian discs. So there are exactly two vertices in $\tri$.

Secondly, there are three types of edges in $\tri$ - dual to discs $D_i$, dual to discs $D^\prime_j$ and dual to faces of the diagram $\di$ on $\Sigma$. 
The first two types of edges are of even order, since their order is the same as the number of intersection points of the boundaries of the discs $D_i$, and $D^\prime_j$ with the other discs of $\di$. By assumption, these numbers are all even. Finally faces of the Heegaard diagram on $\Sigma$ must be of even order, since arcs of the faces alternate between meridian discs in the two systems. By choosing the discs to have minimal intersection, we can assume that there are no faces of $\Sigma$ of degree two and all faces are discs, by the assumption that the splitting is irreducible and the manifold is irreducible. Finally, if $M \ne \R P^3$, then no edge of the first two types can be of degree two either. The reason is that a degree two edge would correspond to a disc say $D_i$ with boundary meeting all  the discs $D^\prime_j$ in a total of two points. Now if there are two such discs each met once, then there would be a trivial handle of the splitting, contradicting our assumption that it is irreducible. If there are two intersections with a single disc $D^\prime_j$, then it is easy to see that there is an embedded copy of $\R P^2$ obtained by attaching a Mobius band in $Y^\prime$ to $D_i$. But then since $M$ is irreducible, we see that it must be $\R P^3$. 
\end{proof}

\begin{remark}
Notice that if we take the usual Heegaard diagram for $\R P^3$ using a Heegaard torus, then the dual triangulation does have two vertices and all edges of even order, but also there are edges of degree two. 
\end{remark}

\begin{remark}
If we take any irreducible Heegaard splitting for any irreducible closed orientable 3--manifold and take two parallel copies of a complete system of meridian discs for each handlebody, we get a triangulation with all edges having even order and $2k+2$ vertices, where $k$ is the genus of the Heegaard splitting. This again shows that to have an interesting theory, we need to restrict to small numbers of vertices. The case of $1$--vertex triangulations is particularly interesting, since minimal triangulations are of this type (see \cite{JR}). 
\end{remark}

\begin{remark}
Another construction is given by a strongly irreducible Heegaard splitting for a closed orientable 3--manifold. Recall that for such a splitting, every meridian disc for one handlebody meets every meridian disc for the other handlebody. In this case, pick two systems of meridian discs for the two handlebodies which each separate the handlebodies into two 3--balls. Then the dual triangulation has four vertices. Moreover every edge is of even order. For edges dual to faces on the Heegaard splitting, the reason is as above. For edges dual to meridian discs, since the splitting is strongly irreducible, each disc for one handlebody meets all the discs for the other handlebody. Hence the boundary of such a disc is divided into an even number of arcs lying in the two 3--balls. Hence the dual edges have even degree as claimed. This shows that even triangulations possessing four vertices are very common. 
\end{remark}

\begin{theorem}\label{thm:closedexist 2}
Suppose that $M$ is a closed, irreducible, orientable 3--manifold. If $H_1(M,\Z_2)$ is non-trivial, then $M$ admits an even triangulation $\tri$ with exactly two vertices. Moreover this triangulation can be chosen as the dual to a Heegaard diagram. If $M \ne \R P^3$ then the triangulation can also be chosen to not have any edges of degree $2$.
\end{theorem}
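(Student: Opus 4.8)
The plan is to deduce this from Theorem~\ref{thm:construct 2-vertex}: it suffices to produce a diagram for an irreducible Heegaard splitting of $M$ in which every meridian disc meets the discs of the opposite system in an \emph{even} total number of points, and then to quote that theorem verbatim. First I would fix a minimal-genus Heegaard splitting $\Sigma$ of $M$, with handlebodies $Y$ and $Y'$ and genus $k\ge 1$. Since $M$ is irreducible and $M\neq S^3$ (because $H_1(M;\Z_2)\neq 0$), this splitting is irreducible in the sense of Theorem~\ref{thm:construct 2-vertex}: a reducible splitting of an irreducible manifold is stabilised, contradicting minimality of $k$. Working with $\Z_2$ coefficients, let $\langle\,,\,\rangle$ be the $\Z_2$--intersection (symplectic) form on $H_1(\Sigma;\Z_2)\cong\Z_2^{2k}$, and let $\mathcal{A}=\ker\big(H_1(\Sigma)\to H_1(Y)\big)$ and $\mathcal{B}=\ker\big(H_1(\Sigma)\to H_1(Y')\big)$ be the two Lagrangian subspaces; the boundary curves of a complete meridian system for $Y$ (resp.\ $Y'$) form a $\Z_2$--basis of $\mathcal{A}$ (resp.\ $\mathcal{B}$). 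The point is that on an orientable surface the mod~$2$ geometric intersection number of two simple closed curves equals their algebraic $\Z_2$--intersection number, so for meridian systems $D_1,\dots,D_k$ of $Y$ and $D'_1,\dots,D'_k$ of $Y'$ the total intersection of $\partial D_i$ with $\bigcup_j\partial D'_j$ is congruent mod~$2$ to $\langle[\partial D_i],\,\sum_j[\partial D'_j]\rangle$.

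Next I would record the homological input. By Mayer--Vietoris for $M=Y\cup_\Sigma Y'$, one has $\mathcal{A}\cap\mathcal{B}=\ker\big(H_1(\Sigma)\to H_1(Y)\oplus H_1(Y')\big)\cong H_2(M;\Z_2)$, which by Poincar\'e duality over $\Z_2$ has the same dimension as $H_1(M;\Z_2)$, hence is nonzero by hypothesis; fix $0\neq c\in\mathcal{A}\cap\mathcal{B}$. If $k=1$, then $\mathcal{A}$ and $\mathcal{B}$ are lines and $\mathcal{A}\cap\mathcal{B}\neq 0$ forces $\mathcal{A}=\mathcal{B}$, so $[\partial D_1]=[\partial D'_1]$ and $\langle[\partial D_1],[\partial D'_1]\rangle=\langle[\partial D_1],[\partial D_1]\rangle=0$: the (essentially unique) diagram already has the required parity.

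If $k\ge 2$, I would use that the sum of the vectors of any $\Z_2$--basis of $\Z_2^k$ can be made to equal any prescribed \emph{nonzero} vector (the set of such sums is $GL_k(\Z_2)$--invariant, nonempty, and excludes $0$), together with the fact that handle slides act on the boundary curves of a complete meridian system through all of $GL_k(\Z_2)$: after suitable handle slides on the $Y$--system and, independently, on the $Y'$--system, I may assume $\sum_i[\partial D_i]=c$ and $\sum_j[\partial D'_j]=c$. Then for each $i$ the total intersection of $\partial D_i$ with the opposite system is $\equiv\langle[\partial D_i],c\rangle=0$, since $c\in\mathcal{A}\cap\mathcal{B}\subseteq\mathcal{A}=\mathcal{A}^{\perp}$ and $[\partial D_i]\in\mathcal{A}$; symmetrically for each $\partial D'_j$.

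In either case the resulting Heegaard diagram satisfies the hypotheses of Theorem~\ref{thm:construct 2-vertex}, so its dual triangulation is an even triangulation of $M$ with exactly two vertices, and has no edge of degree $2$ when $M\neq\R P^3$. I expect the crux to be the $k\ge 2$ step: verifying carefully that handle slides realise all of $GL_k(\Z_2)$ on the meridian curves while keeping the discs embedded and disjoint, and combining this with the symplectic bookkeeping so that \emph{both} meridian systems can simultaneously be arranged to sum to the same class $c\in\mathcal{A}\cap\mathcal{B}$. The remaining ingredients---minimal genus forcing irreducibility of the splitting, the Mayer--Vietoris/Poincar\'e-duality identification of $\mathcal{A}\cap\mathcal{B}$, and the mod~$2$ intersection count---are standard.
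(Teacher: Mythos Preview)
Your argument is correct and reaches the same conclusion via Theorem~\ref{thm:construct 2-vertex}, but the route differs from the paper's. The paper works directly with the $k\times k$ matrix of \emph{geometric} intersection numbers: band sums of discs on either side of $\Sigma$ implement row and column operations, so one first puts the matrix into Smith normal form $\mathrm{diag}(d_1,\dots,d_k)$; since $H_1(M;\Z_2)\neq 0$ the product $d_1\cdots d_k$ is even, hence $d_k$ is even, and then adding all earlier rows to the last row followed by all earlier columns to the last column makes every row sum and every column sum even. Your approach instead passes to $\Z_2$--homology, identifies $\mathcal{A}\cap\mathcal{B}\cong H_2(M;\Z_2)\neq 0$ by Mayer--Vietoris, and uses the Lagrangian condition together with transitivity of $GL_k(\Z_2)$ on nonzero vectors to force both basis sums to equal a fixed $c\in\mathcal{A}\cap\mathcal{B}$; the parity then drops out of $\langle\mathcal{A},\mathcal{A}\rangle=0$. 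Your version is more conceptual and explains \emph{why} the parity can be achieved (the obstruction lives in a Lagrangian), while the paper's version is more hands-on and avoids invoking Mayer--Vietoris, Poincar\'e duality, or the generation of $GL_k(\Z_2)$ by transvections. One point worth making explicit in your write-up is the same subtlety the paper glosses over: a band sum of $D_i$ with $D_j$ realises the transvection on \emph{geometric} intersection numbers only if the band arc on $\Sigma$ is chosen disjoint from the curves $\partial D'_1,\dots,\partial D'_k$, which is always possible since the complement of those curves in $\Sigma$ is connected.
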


\begin{proof}
As in the proof of Theorem~\ref{thm:construct 2-vertex}, we begin with an irreducible Heegaard splitting $\Sigma$ for $M$ and a Heegaard diagram. The idea is to modify the Heegaard diagram by disc band sums until we achieve the condition that each disc meets all the other discs in an even total intersection. 

Note firstly that band sums of the discs on either side of $\Sigma$ correspond to row or column operations on the square matrix of geometric intersection numbers of the discs. We can use elementary algebra to reduce the matrix to a diagonal form, where the diagonal elements each are divisors of the next ones down the main diagonal. For instance, the first diagonal element is obtained by using Euclid's algorithm for the greatest common divisor of all the matrix entries, then the next diagonal element is the greatest common divisor of the submatrix obtained by deleting the first row and column, and then one keeps iterating this procedure.

Since $H_1(M,\Z_2) \ne 0$ we see that the product of the diagonal entries must be even. Hence the last diagonal entry is even. If we add all the rows (except the last one) to the last row, it is easy to see the result is a matrix where all the column sums are even. Then we can add all the columns (except the last column) to the last column and get in addition that all the row sums are even. This completes the main result.

Exactly as in the previous section, if $M \ne \R P^3$, since the splitting is irreducible we can arrange that no edge has degree $2$.  
\end{proof}

\begin{problem}
Suppose $M$ is a closed, irreducible, orientable $3$--manifold and there is a representation of $\pi_1(M)$ onto $\Alt(4)$. Is there always a 2--vertex triangulation $\tri$ of $M$ so that all the edges have even order and all the tetrahedra contain a single copy of $v$ and three copies of $v^\prime$, where $v,v^\prime$ are the two vertices of $\tri$? It was noted in Example~\ref{exa:tetrahedral} that $S^3/T$, where $T$ is the binary tetrahedral group, satisfies $T$ maps onto $\Alt(4)$ and has such a triangulation. There is a family of generalised binary tetrahedral groups of order $8 \cdot 3^k$ for $k=1,2,3 \dots$ which are fundamental groups of elliptic $3$--manifolds and all have representations onto $\Alt(4)$. (See \cite{Or} for more details.) Do these $3$--manifolds also have even 2--vertex triangulations of this type?

\end{problem}


\subsection{Existence of even triangulations with one vertex}

\begin{theorem}\label{thm:heeg 1}
Suppose $M$ is a closed, irreducible, orientable $3$--manifold with an irreducible 1--sided Heegaard diagram so that every disc in the diagram has even total intersection number with the other discs in the diagram. Then $M$ admits an even triangulation dual to the Heegaard diagram, which has exactly one vertex. Moreover if $M \ne L(4,1)$, then the dual triangulation can be chosen to have no edges of degree $2$.
\end{theorem}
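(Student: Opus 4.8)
The plan is to mimic the proof of Theorem~\ref{thm:construct 2-vertex} for the one-sided setting, replacing the pair of handlebodies by a single handlebody glued to itself. First I would recall the structure of a one-sided Heegaard splitting: $M$ is obtained from a handlebody $Y$ of genus $g$ by gluing its boundary $\partial Y$ to itself via a fixed-point-free orientation-reversing involution, so that the image $F$ of $\partial Y$ in $M$ is a (non-orientable) surface and $M \setminus F = \operatorname{int}(Y)$. A one-sided Heegaard diagram is then a single system of meridian discs $D_1, \dots, D_g$ for $Y$ whose boundaries, together with their images under the involution, cut $\partial Y$ into discs; equivalently the spine $F \cup (D_1 \cup \dots \cup D_g)$ of $M$ is a $2$-complex, and the dual triangulation $\tri$ is built by putting one tetrahedron at each point of the spine that is locally a cone on the $1$-skeleton of $\partial\Delta^3$ --- concretely, one vertex dual to the single $3$-ball obtained by cutting $Y$ along $D_1, \dots, D_g$, one tetrahedron per (vertex of the) diagram, and edges dual to the discs $D_i$ and dual to the faces of the diagram on $F$.

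Next I would verify the three claims in turn. \textbf{One vertex:} cutting $Y$ along a complete system of meridian discs yields a single $3$-ball (since $Y$ is a handlebody and the $D_i$ form a complete system), and its dual is the unique vertex of $\tri$; here is the essential difference from the two-vertex case, where there are two handlebodies and hence two balls. \textbf{Evenness of edges dual to faces:} a face of the diagram on $F$ has boundary consisting of arcs that alternate between the $\partial D_i$ and their involution-images, so by the argument already used in Theorem~\ref{thm:construct 2-vertex} its degree is even; irreducibility of the splitting and of $M$ lets me arrange (after isotopy to minimal position) that all faces are discs and none has degree two. \textbf{Evenness of edges dual to the discs:} the degree of the edge dual to $D_i$ equals the total number of intersection points of $\partial D_i$ with the other curves of the diagram (i.e.\ with $\partial D_j$, $j \neq i$, and with the involution-images), which is even by the hypothesis that every disc meets the others in an even total intersection number. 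Finally, to get no edges of degree two when $M \neq L(4,1)$, I would argue as in Theorem~\ref{thm:construct 2-vertex}: a degree-two edge dual to $D_i$ would force $\partial D_i$ to meet the remaining diagram curves in exactly two points; one analyses the possibilities (two simple intersections, giving a trivial handle contradicting irreducibility of the splitting; or a single curve met twice, producing an embedded $\mathbb{R}P^2$ whose regular neighbourhood, together with irreducibility of $M$, forces $M$ to be the corresponding small manifold) --- and in the one-sided case the exceptional manifold is $L(4,1)$, which carries the one-tetrahedron Klein-bottle splitting of Example~\ref{exa:L(4,1)}, rather than $\mathbb{R}P^3$.

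The main obstacle I anticipate is the careful bookkeeping in the last step: in the one-sided setting the ``other discs'' that $\partial D_i$ can meet include the involution-image $\iota(\partial D_j)$ of each meridian curve (and of $\partial D_i$ itself), so the local models for a degree-two edge are more numerous than in the two-sided case, and one must check that the only one not excluded by irreducibility of the splitting and of $M$ is the one producing $L(4,1)$. I would handle this by tracking, for each intersection point of $\partial D_i$ with the diagram, whether the normal arc in the dual tetrahedron runs between two copies of $D_i$, and showing that a bigon or a once-punctured $\mathbb{R}P^2$ in $Y$ capped off across $F$ yields either a reducible splitting or an embedded projective plane, exactly as in the proof of Theorem~\ref{thm:construct 2-vertex}, with $L(4,1)$ replacing $\mathbb{R}P^3$ because here the Möbius band lives on the one-sided surface. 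The rest of the proof is a routine adaptation of the two-vertex argument.
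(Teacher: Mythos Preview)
Your approach is essentially the paper's: build the dual triangulation to the spine $K\cup\di$, observe the complement is a single ball (one vertex), check evenness for the two types of edges, and rule out degree-two edges via irreducibility plus a small-manifold analysis ending at $L(4,1)$. The paper differs only cosmetically in that it passes to the orientable double cover $\widetilde{M}$ to verify face evenness and to locate the embedded $\R P^2$, whereas you argue directly on $F$; both routes are fine.

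There is one point you should tighten. In your evenness argument for the edge dual to $D_i$ you write that its degree is the total intersection of $\partial D_i$ with ``the other curves \dots\ and the involution-images'' and then invoke the hypothesis. But the hypothesis only controls intersections with the \emph{other} discs $D_j$, $j\neq i$; it says nothing about $\partial D_i\cap \iota(\partial D_i)$. The paper handles this with the one-line observation that self-intersections of $\partial D_i$ on $K$ are each counted \emph{twice} in the degree (equivalently, $\partial D_i\cap\iota(\partial D_i)$ is $\iota$-invariant and $\iota$ is free, so has even cardinality), hence do not affect parity. You flagged this bookkeeping issue for the degree-two step, but it is already needed for evenness; once you insert that sentence, your proof matches the paper's. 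For the degree-two analysis itself, note that this same observation shows a single self-intersection already contributes~$2$ to the degree, so the cases you must examine are exactly one self-intersection (and nothing else) or two intersections with other $D_j$; the paper's treatment of the latter (cancelling pair or $\R P^2$ in $\widetilde{M}$ giving $L(4,1)$) is as terse as yours.
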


\begin{proof}
This construction is very similar to the previous one of Theorem~\ref{thm:construct 2-vertex}, so we just give a summary. We start with a 1--sided Heegaard diagram $\di$ for a closed orientable irreducible $3$-manifold $M$. (See \cite{rub}). So there is an embedded 1--sided surface $K$ in $M$ so that the closure of the complement $M \setminus K$ is a handlebody $Y$. Therefore $\partial Y$ is the orientable double covering $\widetilde{K}$ of $K$. As before, the Heegaard diagram is a complete set of meridian discs $D_1, D_2, \dots D_k$ for $Y$ where $k$ is the genus of $\partial Y$. The triangulation $\tri$ we are interested in is the dual to the Heegaard diagram, i.e the spine $K \cup \di$. Note that now the complement of this spine is a single ball and so $\tri$ is a $1$--vertex triangulation. 

Next, the edges of $\tri$ are either dual to meridian discs $D_i$ or to faces of the Heegaard diagram on $K$. For the latter it is again easy to see that these are of even order. For if we lift to the canonical double covering $\widetilde{M}$ of $M$ so that $Y$ lifts to two handlebodies and $K$ lifts to its orientable double covering $\widetilde{K}$, we see that the faces in $K$ lift to faces in $\widetilde{K}$. The latter are of even order as before. Again we need to assume that we have picked an irreducible 1--sided Heegaard splitting to ensure that all these faces are discs. Moreover none of the faces will have order two if the discs are arranged to have minimal intersection and self intersection number. 

Finally we see that if each disc $D_i$ has even total intersection with all the other discs $D_j$ for $j \ne i$, then the dual edge to $D_i$ will have even order. The only thing we need to take care of is self intersections of $\partial D_i$. But these are counted twice when computing the degree of the edge dual to $D_i$ so we can ignore them. Finally no edge dual to a disc can have degree two if the Heegaard splitting is irreducible. For if two discs meet once, then there is a cancelling pair (see \cite{br}). On the other hand, if two discs meet twice, then again we are in the situation of having an embedded $\R P^2$ in the double covering $\widetilde{M}$. Hence $M = L(4,1)$. 
\end{proof}

\begin{remark}
The manifold $\R P^3$ is again a special case. Its unique irreducible 1--sided Heegaard diagram has genus zero and so is not dual to a triangulation at all. See \cite{br}.
\end{remark}

\begin{theorem}\label{thm:closedexist 1}
Suppose that $M$ is a closed, irreducible, orientable $3$-manifold so that $\pi_1(M)$ maps onto either $C_2 \times C_2$ or $C_4$. Then $M$ admits an even triangulation with exactly one vertex. Moreover this triangulation is dual to a 1--sided Heegaard diagram. 
\end{theorem}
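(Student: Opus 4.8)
The plan is to mirror the proof of Theorem~\ref{thm:closedexist 2}, using one-sided Heegaard splittings and Theorem~\ref{thm:heeg 1} in place of two-sided splittings and Theorem~\ref{thm:construct 2-vertex}. Since $\pi_1(M)$ surjects onto $C_2\times C_2$ or $C_4$ it surjects onto $C_2$, so $H_1(M;\Z_2)\ne 0$ and hence, by Poincar\'e duality, $H_2(M;\Z_2)\ne 0$; by Rubinstein's theory \cite{rub} the irreducible manifold $M$ then carries a geometrically incompressible one-sided surface $K$ whose complement is a handlebody $Y$, and we may take the associated one-sided Heegaard splitting to be irreducible. Fix a one-sided Heegaard diagram, i.e.\ a complete meridian system $D_1,\dots,D_k$ for $Y$, where $k$ is the genus of $\partial Y=\widetilde K$, and write $\bar c_i\subset K$ for the image of $\partial D_i\subset\widetilde K$ under the double cover $\widetilde K\to K$.

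First I would isolate the combinatorial condition to be arranged. As in the proof of Theorem~\ref{thm:heeg 1}, the edges of the dual triangulation dual to faces of the diagram on $K$ are automatically of even degree, so the only requirement is that, for each $i$, the edge dual to $D_i$ be even; since self-intersections of $\bar c_i$ are counted with multiplicity two, this says precisely that the $i$-th row sum of the mod-$2$ matrix $A=(a_{ij})$, with $a_{ij}=|\bar c_i\cap\bar c_j|\bmod 2$ for $i\ne j$ and $a_{ii}=0$, vanishes; equivalently $A\mathbf 1=0$, where $\mathbf 1=(1,\dots,1)^{\mathrm T}$. The moves at our disposal are disc band-sums $D_i\mapsto D_i+D_j$: each acts on $A$ by a symmetric congruence $A\mapsto E^{\mathrm T}AE$ with $E$ an elementary transvection (the $\Z_2$-intersection form of $K$ is symmetric and, as the $\bar c_i$ are orientation-preserving loops in $K$, $A$ is alternating and stays alternating), and products of transvections realise all of $GL_k(\Z_2)$. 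Since $GL_k(\Z_2)$ acts transitively on $\Z_2^k\setminus\{0\}$ and $(P^{\mathrm T}AP)\mathbf 1=P^{\mathrm T}A(P\mathbf 1)$, the condition $A\mathbf 1=0$ can be achieved precisely when the initial matrix $A$ is singular over $\Z_2$ --- choose $P$ with $P\mathbf 1$ a nonzero element of $\ker A$ --- after which Theorem~\ref{thm:heeg 1} delivers the desired even one-vertex triangulation, dual to the band-summed one-sided diagram.

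Thus everything reduces to producing, from the hypothesis, a one-sided diagram with singular mod-$2$ intersection matrix. The key structural fact is the identification (van Kampen / Mayer--Vietoris for $M=Y\cup_{\widetilde K}N(K)$) $H_1(M;\Z_2)\cong H_1(K;\Z_2)/\langle[\bar c_1],\dots,[\bar c_k]\rangle$, with $H_1(K;\Z_2)\cong\Z_2^{k+1}$, together with the observation that each $[\bar c_i]$ lies in the hyperplane $\ker w_1(K)$. When $\pi_1(M)\twoheadrightarrow C_2\times C_2$, i.e.\ $\dim_{\Z_2}H_1(M;\Z_2)\ge 2$, the $[\bar c_i]$ span a subspace of dimension at most $k-1$ of $\ker w_1(K)$, and a short argument with the (possibly degenerate) intersection form of $K$ shows the form restricted to this subspace is degenerate, yielding a nonzero vector in $\ker A$. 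When $\pi_1(M)\twoheadrightarrow C_4$ one may assume $\dim_{\Z_2}H_1(M;\Z_2)=1$; here $A$ is automatically singular when $k$ is odd (every alternating $\Z_2$-matrix of odd size is), and when $k$ is even one must invoke the extra $2$-torsion of $H_1(M;\Z)$ --- for instance by passing to the double cover $\widetilde M$ corresponding to $\pi_1(M)\to C_4\to C_2$ and locating an invariant splitting, or by a direct analysis of $w_1(K)$ --- to arrange singularity of $A$.

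The hard part is exactly this: controlling the mod-$2$ intersection form of a one-sided diagram, and in particular the $C_4$ case. Unlike the two-sided situation, where the intersection matrix presents $H_1(M;\Z)$ outright and $H_1(M;\Z_2)\ne 0$ already forces an even Smith-normal-form entry, a one-sided diagram sees $H_1(M;\Z_2)$ only after accounting for the twisting by $w_1(K)$, and the surface $K$ carries one more $\Z_2$ in homology than the handlebody $Y$; so the automatic fact that $H_1(M;\Z_2)\ne 0$ carries no information, and one must genuinely extract the needed degeneracy of $A$ from the $C_2\times C_2$- or $C_4$-structure. This is also why the construction is confined to these two target groups and does not reach the non-abelian images $\Alt(4)$, $\Sym(4)$ allowed by Theorem~\ref{thm:closed 1}: it is intrinsically a $\Z_2$-homological construction. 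Finally, as in Theorem~\ref{thm:heeg 1}, a few low-genus cases --- notably around $L(4,1)$ and $\R P^3$ --- should be checked directly.
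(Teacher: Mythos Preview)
Your overall architecture matches the paper's: take a one-sided Heegaard splitting, perform disc band sums to make all disc--disc intersection numbers even, then invoke Theorem~\ref{thm:heeg 1}. Your observation that band sums act on the symmetric mod-$2$ matrix $A$ by congruence $A\mapsto P^{\mathrm T}AP$, and that achieving $A\mathbf 1=0$ reduces (via transitivity of $GL_k(\Z_2)$ on nonzero vectors) to finding a nonzero kernel vector of $A$, is correct and is a genuine improvement over the paper's explicit block-by-block reduction.

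The gap is in establishing that $A$ is singular. Your $C_2\times C_2$ case is fine: the $[\bar c_i]$ are $k$ vectors spanning at most a $(k-1)$-dimensional subspace, hence are dependent, and the Gram matrix of dependent vectors is singular. But in the $C_4$ case with $k$ even and $\dim H_1(M;\Z_2)=1$, your own analysis shows the $[\bar c_i]$ may form a basis of $\ker w_1(K)$, and the intersection form restricted to $\ker w_1$ is then \emph{nondegenerate} (the radical $\langle(1,\dots,1)\rangle$ lies outside $\ker w_1$ when $g=k+1$ is odd), so the Gram-matrix viewpoint alone does \emph{not} force $A$ singular. You flag this and gesture at ``passing to the double cover'' or ``a direct analysis of $w_1(K)$'', but neither is carried out.

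The paper avoids this case split entirely by working in the double cover $\widetilde M$ from the outset. There the lifted splitting is an ordinary two-sided Heegaard splitting with the \emph{same} symmetric intersection matrix $A$ (your $A$ equals the paper's equivariant matrix mod~$2$), and $H_1(\widetilde M;\Z_2)\cong\mathrm{coker}(A)$. The hypothesis $\pi_1(M)\twoheadrightarrow C_2\times C_2$ or $C_4$ forces every index-$2$ subgroup of $\pi_1(M)$ to surject onto $C_2$, so $H_1(\widetilde M;\Z_2)\ne 0$ and $A$ is singular---uniformly, with no parity-of-$k$ analysis. Grafting this single observation onto your argument closes the gap and, combined with your $GL_k(\Z_2)$ trick, actually yields a shorter proof than the paper's.
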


\begin{proof}
The idea is very similar to Theorem~\ref{thm:closedexist 2}. Namely, since $H_1(M,\Z_2) \ne 0$ we can build a 1--sided Heegaard splitting $K$ for $M$. The approach is then to do disc band sums on the meridian discs for a disc diagram $\di$ to achieve that all discs have even total intersection number with the other discs of the system. The main difficulty is that we will have to work in the double covering $\widetilde{M}$ and do disc moves equivariantly. It turns out that these are just the same as doing simultaneous row and column operations on a symmetric matrix. 

So as before, denote the covering involution by $g$ and assume that the handlebodies for the 2--sided Heegaard splitting $\widetilde{K}$ are denoted $\widetilde{Y}, g(\widetilde{Y})$. Note that $\di$ and $\tri$ lift to $\tilde \di$ and $\widetilde{\tri}$. The former consists of two families of discs $\widetilde{D}_1, \widetilde{D}_2, \dots \widetilde{D}_k$ for $\widetilde{Y}$ and $g(\widetilde{D}_1), g(\widetilde{D}_2), \dots g(\widetilde{D}_k)$ for $g(\widetilde{Y})$. Clearly the geometric intersection matrix for this disc system is symmetric, with the action of $g$ interchanging rows and columns. 

Now by our topological assumptions, we know that  $H_1(\widetilde{M}, \Z_2) \ne 0$. Hence we know that the intersection matrix has zero determinant working in $\Z_2$. Our aim is to do row and column operations simultaneously to convert the matrix to have all row sums (and therefore also columns sums) even. Then the dual triangulation will be $g$--equivariant, with two vertices and all edges of even degree. The triangulation will project to a triangulation of $M$ with the required properties. 

Consider the first column of the intersection matrix. If there is a one in this column, we can permute rows to shift this to just below the main diagonal. 
(Recall that the main diagonal entries are all zero, since the corresponding intersection numbers are even). We can now zero out all the entries below so that there is a single one in the first column, using row operations and the corresponding column operations. Otherwise all the entries in this column are zero. By induction it follows that we can arrange that all the entries which are not adjacent to the main diagonal, are zero, by following the same procedure. Since row and column operations do not affect the determinant which is still zero in $\Z_2$. 

Note that each column and each row of our matrix has at most two entries $1$ and all other entries zero. Moreover if the matrix is non-zero, there are at least two columns with a single entry $1$. If the matrix can be decomposed into diagonal blocks, we can clearly work with the blocks individually. Note that at least one such block must have determinant zero and the other blocks could have determinant $1$. So in the case of blocks we have to explain how to deal with the blocks with non-zero determinant. To start with, we assume there is a single block. In particular, this means we can assume there are no columns with all zero entries. 

We divide the argument into cases. For the first case, assume we have a $k \times k$ block with $k \ge 3$ and the first and last column have a single $1$ and all other columns have two $1$ entries. Perform the following column and row operation on the block. Add copies of columns $j$ for $2 \le j \le k-1$ to column $1$ and similarly for rows. It is easy to verify this produces a matrix where each row and column sum is even; in fact there are precisely two entries $1$ in each row and column. 

Next, if we have a block matrix, possibly with some zero blocks, so long as the non-zero blocks are of the form in the previous paragraph, clearly the same argument works. So we are left with the case of some $2 \times 2$ blocks. Since the matrix has non-zero determinant, there must be either some zero blocks or some $k \times k$ blocks with $k \ge 3$. If there is a zero block, then a simple process converts a $2 \times 2$ block into a $3 \times 3$ block. Namely add a column and row of the $2 \times 2$ block to an adjacent zero block. 

So the problem is reduced to a final case, where there is a combination of $k \times k$ blocks and $2 \times 2$ blocks with no zero blocks. 
Consider the case of an adjacent $2 \times 2$ block and a $k \times k$ block. Add a copy of each of the two columns of the $2 \times 2$ block to 
each of the first and last columns of the $k \times k$ block and do similar row sums. It is easy to verify that this converts all the four columns with a single $1$ entry to have two $1$ entries and likewise for the rows. Hence this completes the proof in all cases. 
\end{proof}

As mentioned in the introduction, a key result due to Lubotzky~\cite{Lu} is that any complete hyperbolic $3$--manifold of finite volume has a finite sheeted covering so that the rank of $\Z_2$--homology is arbitrarily large. Theorem~\ref{thm:closedexist 1} therefore implies:

\begin{corollary}
Any closed hyperbolic $3$--manifold of finite volume has a finite sheeted covering with a 1--vertex even triangulation.
\end{corollary}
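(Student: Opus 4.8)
The plan is to combine Lubotzky's homology growth theorem with Theorem~\ref{thm:closedexist 1}. Let $M$ be a closed hyperbolic $3$--manifold. If $M$ is non-orientable, I would first pass to its orientation double cover, which is again closed and hyperbolic; since any finite cover of that double cover is also a finite cover of $M$, there is no loss in assuming from the outset that $M$ is closed, orientable and hyperbolic.

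By Lubotzky~\cite{Lu}, $M$ has a finite sheeted covering $\widetilde M \to M$ for which the rank of $H_1(\widetilde M;\Z_2)$ exceeds any prescribed bound; here rank at least $2$ is all that is needed. Since $H_1(\widetilde M;\Z_2)$ is then a $\Z_2$--vector space of dimension at least $2$, there is a surjective linear map $H_1(\widetilde M;\Z_2)\to C_2\times C_2$, and composing with the Hurewicz homomorphism yields an epimorphism $\pi_1(\widetilde M)\to C_2\times C_2$. Moreover $\widetilde M$ is closed and orientable because $M$ is, and it carries the pulled--back hyperbolic metric, hence is irreducible.

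Thus $\widetilde M$ satisfies all the hypotheses of Theorem~\ref{thm:closedexist 1}, which accordingly provides an even triangulation of $\widetilde M$ with exactly one vertex; as $\widetilde M$ is a finite sheeted covering of $M$, this is the asserted covering. There is no genuine obstacle in this argument: the two substantive inputs, namely Lubotzky's theorem and the construction of Theorem~\ref{thm:closedexist 1}, are already available, and what remains is only the routine verification that a finite cover of a closed hyperbolic $3$--manifold stays closed, irreducible and (after the initial reduction) orientable.
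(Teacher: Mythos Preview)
Your proof is correct and follows the same route as the paper: invoke Lubotzky's theorem to obtain a finite cover with $\Z_2$--homology of rank at least $2$, hence an epimorphism onto $C_2\times C_2$, and then apply Theorem~\ref{thm:closedexist 1}. You have in fact been more careful than the paper in explicitly verifying the hypotheses of that theorem (orientability via passing to the orientation double cover, and irreducibility from hyperbolicity).
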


\begin{example}\label{exa:minimal counter}
The Seifert fibered space $M = S^2(\;(2,1)\;(2,1)\;(2,1)\;)$ in the ``Closed Orientable Census" of Regina~\cite{Regina} has a unique minimal triangulation with four tetrahedra and the degree sequence of the edges is $4, 5, 5, 5, 5.$ Now $H_1(M)=\Z_2 \oplus \Z_6,$ and hence $\pi_1(M)$ maps onto $C_2 \times C_2.$ Whence $M$ has an even triangulation. An even triangulation with six tetrahedra was found by applying Pachner moves to the minimal triangulation; the degree sequence of the edges is $4, 4, 4, 4, 4, 4, 12.$ The edge of degree 12 meets each tetrahedron in a pair of opposite edges. The quadrilateral surface has two components: an embedded Klein bottle and an immersed surface of Euler characteristic $-4.$ The Klein bottle meets each tetrahedron in the quadrilateral disc disjoint from the degree 12 edge and is a 1--sided Heegaard splitting surface for $M.$
\end{example}

\begin{problem}
Suppose $M$ is a closed, orientable, irreducible $3$--manifold and there is a representation of $\pi_1(M)$ onto $\Alt(4)$ or $\Sym(4)$. Is there an even triangulation of $M$ with exactly one vertex? The binary tetrahedral and binary octahedral spaces are interesting examples to resolve.
\end{problem}

\begin{remark}[(Ideal triangulations)]\label{rem: ideal prop+constr}
Many of the arguments used in this section work when some or all of the vertex links have non-positive Euler characteristic, and hence can be applied to general 3--dimensional pseudo-manifolds $\widehat{M}.$ However, more cases arise as the number of ideal vertices in the cover may be smaller than the covering degree times the number of ideal vertices in $\widehat{M}.$ Moreover, there are analogous constructions of even ideal triangulations using 1--sided or 2--sided splittings into compression bodies. 
\end{remark}


\section{Very short hierarchies}
\label{sec:Very short hierarchies}

Haken $n$--manifolds are defined in \cite{fr}. In particular, a closed orientable Haken $n$--manifold can be cut open along a collection of embedded closed and compact orientable hypersurfaces called a \emph{hierarchy} to a collection of $n$--cells, termed Haken cells. These cells have a boundary pattern given by a decomposition of the boundary of each cell into polyhedra. The polyhedra are the intersections of the hypersurfaces of the hierarchy with the boundary of the Haken cell. 

We are especially interested in when the hierarchy is as `short as possible'. In dimension three, such hierarchies are called \emph{very short}. It is clear that we need to cut along at least $n$ collections of hypersurfaces, so when this number is sufficient, the hierarchy is deemed very short. It is well-known that Haken $3$--manifolds all have very short hierarchies but in higher dimensions this is an open question. 

We have the following interesting connection between very short hierarchies and even triangulations.

A Haken $n$--cell coming from a very short hierarchy is an $n$--dimensional polytope with the property that the codimension one faces can be $n$--coloured, so that no two faces sharing a codimension two face have the same colour. In particular, we see that at every vertex of the polytope, the faces take all possible $n$ colours. 

Now consider the dual triangulation to the cell structure of the boundary of an $n$--dimensional polytope with such a colouring scheme. This is a triangulation of $S^{n-1}$ and the vertices are dual to the faces and hence are $n$--coloured so that no two vertices at the ends of an edge have the same colouring. But this is precisely what we get from our labelling scheme in \S\ref{sec:sym reps and normal}. In particular, the triangulation must be even. Since the fundamental group of $S^{n-1}$ is trivial for $n \ge 3$, it follows that the canonical symmetric representation has trivial image. 

Conversely, suppose we start with any even triangulation of $S^{n-1}$. The canonical symmetric representation must have trivial image since the fundamental group is trivial. Hence there is a labelling of the vertices of the type we want, so that any two vertices at the ends of an edge have different labels. So if we consider the dual cell structure, this can be viewed as the boundary of an $n$--polytope where the faces are $n$--coloured. So this shows that Haken $n$--cells coming from very short hierarchies have boundaries which are dual to even triangulations. 

Finally, the Haken condition says that each pair of faces of the polytope meet in a single $(n-2)$--face and if three faces intersect in pairs in $(n-2)$--faces then they meet together in an $(n-3)$--face. Translating this into a condition on the dual triangulation means that no two edges share two vertices and if three edges have vertices in common, then they are the boundary of a triangle. 

So we can form a Haken $n$--manifold with a very short hierarchy by choosing a collection of even triangulations of $S^{n-1}$ with the properties that no two edges share two vertices and if three edges have vertices in common, then they are the boundary of a triangle. We can then $n$--colour the dual cell decompositions and can glue these together in a colour preserving manner. 

\begin{example}
For a simple example, consider gluing up a $3$--cube to form a $3$--torus. The corresponding even triangulation of $S^2$ is clearly an octahedron and the corresponding colouring gives opposite vertices the same colour. The gluing then identifies opposite vertices. 
\end{example}




\address{Department of Mathematics and Statistics, The University of Melbourne, VIC 3010, Australia} 
\address{School of Mathematics and Statistics, The University of Sydney, NSW 2006, Australia} 
\email{rubin@ms.unimelb.edu.au} 
\email{tillmann@maths.usyd.edu.au} 

\Addresses

\end{document}